\documentclass[12pt]{amsart}
\usepackage[a4paper,margin=30mm]{geometry}
\usepackage[all]{xy}
\usepackage{tikz}
\usepackage{rotating}
\usepackage{xspace}
\usepackage[usenames,dvipsnames]{pstricks}

\usepackage{eucal}

\usepackage{amssymb,
amsmath}
\usepackage[usenames,dvipsnames]{pstricks}
\usepackage{epsfig}
\usepackage{pst-grad} 
\usepackage{pst-plot} 
\usepackage{tikz-cd}

\usetikzlibrary{arrows.meta}

\tikzcdset{
arrow style=tikz,
diagrams={>={Classical TikZ Rightarrow[length=2.0mm, width=1.25mm]}}
}

\usepackage[letterspace=-45]{microtype}

\usepackage[raiselinks=false,colorlinks=true,citecolor=blue,urlcolor=blue,linkcolor=blue,bookmarksopen=true]{hyperref}

\usepackage{ifthen,changepage}
\usepackage{xargs}

\usepackage[shortlabels]{enumitem}

\catcode`\@=11
\def\@evenfoot{\rule{0pt}{20pt}[\today] \hfill [{\tt \jobname.tex}]}
\def\@oddfoot{\rule{0pt}{20pt}{[\tt \jobname.tex}]\hfill [\today]}
\catcode`\@=13

\newtheorem{theorem}{Theorem}[section]
\newtheorem{proposition}[theorem]{Proposition}
\newtheorem{lemma}[theorem]{Lemma}

\newtheorem{corollary}[theorem]{Corollary}

\theoremstyle{definition}
\newtheorem{definition}[theorem]{Definition}
\newtheorem{example}[theorem]{Example}

\newtheorem{remark}[theorem]{Remark}

\newtheorem{taller}[theorem]{$\!\!$}

\newenvironment{blanko}[1]%
{\begin{taller}{\normalfont\bfseries  #1}\normalfont}%
{\end{taller}}

\makeatletter
\providecommand\@dotsep{5}
\makeatother

\newcommand{\commutes}{\text{\texttt{"}}}

\def\Ob{{\rm Ob}}

\def\catO{{\mathbb O}}
\def\catP{{\mathbb P}}

\usepackage[rm={oldstyle=false}, tt={monowidth=true}]{cfr-lm}
\newcommand{\DDD}{\textnormal{\textsf{\fontseries{sbc}\selectfont{D}\fontseries{n}\selectfont}}}

\newcommand{\SSS}{\textnormal{\textsf{\fontseries{sbc}\selectfont{S}\fontseries{n}\selectfont}}}

\newcommand{\strictnerve}{\mathrm{N}}
\newcommand{\ltnerve}{\mathrm{N}\kern-0.5pt_{\ell \kern-0.4pt t}}

\newcommand{\nerexs}{\ltnerve}

\def\nop{\strictnerve\kern-1pt_{opd}}

\providecommand{\kat}[1]{\textnormal{\lsstyle{{\texttt{#1}}}}}

\newcommand{\Grpd}{\kat{Grpd}}

\newcommand{\Set}{\kat{Set}}
\newcommand{\Cat}{\kat{Cat}}
\newcommand{\Catlt}{\Cat_{\ell \kern-0.4pt t}}
\newcommand{\OpCat}{\kat{OpCat}}

\newcommand{\SMGp}{\kat{SMGp}}
\newcommand{\PrSh}{\kat{Pr}}

\newcommand{\X}{\mathrm{X}}
\newcommand{\Y}{\mathrm{Y}}
\newcommand{\A}{\mathrm{A}}
\newcommand{\E}{\mathrm{E}}
\newcommand{\B}{\mathrm{B}}

\renewcommand{\P}{\mathrm{P}}

\newcommand{\un}{\relax}

\DeclareMathAlphabet{\mathbbe}{U}{bbold}{m}{n}
\newcommand{\simplexcategory}{\mathbf{\Delta}}
\newcommand{\simplexcategoryop}{\mathbf{\Delta}\kern-1.2pt\op}

\newcommand{\tEM}{\simplexcategory\kern-1pt^\ttt}

\newcommand{\tKLEISLI}{\simplexcategory_\ttt}

\newcommand{\ttt}{\mathsf{t}}

\newcommand{\uuu}{\mathsf{u}}

\newcommand{\kkk}{\mathsf{k}}

\newcommand{\iii}{\mathsf{i}}

\def\ikeo{{IKEO}\xspace}

\DeclareRobustCommand\upperstar{%
  \mathchoice%
    {\kern0pt\raise0.55ex\hbox{$\displaystyle *$}\kern0.8pt}
    {\kern0pt\raise0.58ex\hbox{$\textstyle *$}\kern0.8pt}
    {\kern0pt\raise0.45ex\hbox{$\scriptstyle *$}\kern0.4pt}
    {\kern0pt\raise0.4ex\hbox{$\scriptscriptstyle *$}\kern0.2pt}
}%

\def\Fin{{\mathbb{F}\mathrm{in}}}
\def\Fin{{\mathbb{F}}}

\def\calP{{\mathcal P}}

\def\dd{\mathsf{d}}
\def\ss{\mathsf{s}}
\def\cc{\mathsf{c}}
\def\cc{c}

\makeatletter
\newcommand{\xRightarrow}[2][]{\ext@arrow 0359\Rightarrowfill@{#1}{#2}}
\makeatother

\makeatletter
\newcommand{\xLeftarrow}[2][]{\ext@arrow 0359\Leftarrowfill@{#1}{#2}}
\makeatother

\newcommand{\xto}{\xrightarrow}

\newcommand{\drpullback}{\arrow[phantom]{dr}[very near start,description]{\lrcorner}}

\usepackage[all]{xy}
\newcommand{\cd}[2][]{\vcenter{\hbox{\xymatrix#1{#2}}}}

\newcommand{\name}[1]{\ulcorner #1\urcorner}
\newcommand{\isopil}{\stackrel{\raisebox{0.1ex}[0ex][0ex]{\(\sim\)}}%
			{\raisebox{-0.15ex}[0.28ex]{\(\rightarrow\)}}}
\newcommand{\isleftadjointto}{\dashv}

\newcommand{\thg}{\,{\mathord{\text{--}}}\,}

\newcommand{\fib}{\varphi}

\def\fnerve{\P}

\def\and{{\mbox { and }}}
\def\listtodoname{List of Todos}
\def\listoftodos{\@starttoc{tdo}\listtodoname}
\makeatother

\newcommand{\Aut}{\operatorname{Aut}}

\providecommand{\norm}[1]{\left| {#1}\right|}

\newcommand{\CC}{\mathcal{C}}
\newcommand{\EE}{\mathcal{E}}
\newcommand{\finset}{\mathbb{F}}
\newcommand{\op}{^{\text{{\rm{op}}}}}
\newcommand{\id}{\operatorname{id}}

\title{Operadic categories as (pseudo)-simplicial groupoids}
\author[M.~Batanin, J.~Kock, M.~Weber]{Michael Batanin, Joachim Kock,  Mark Weber}

\subjclass[2020]{18M60 (Primary), 18N50, 18D10 (Secondary)}

\begin{document}

\maketitle

\baselineskip 16pt plus 2pt minus 1pt

\setcounter{secnumdepth}{3}
\setcounter{tocdepth}{1}

\begin{abstract}
  From any operadic category $\catO$ we construct a simplicial groupoid
  $\X$ (slightly pseudo in a specific way), called the operadic nerve. It
  integrates all the structure of chosen-local-terminals, fibre functor,
  and cardinality functor into a single simplicial groupoid, which can be
  seen as an undecking of the ordinary nerve of $\catO$ in the Kleisli
  category for the symmetric-monoidal-groupoid monad $\SSS$: we have the
  equation $\DDD \X = \SSS \strictnerve \catO$, where $\DDD$ is upper
  decalage. The construction leads to a new characterisation of operadic
  categories, in which all the axioms end up as simplicial identities,
  and where the notion of operad over an operadic category takes the form
  of a simplicial map subject to well-known pullback conditions (the
  notion of IKEO map).
\end{abstract}

\tableofcontents

\section{Introduction}

\subsection*{Background}

Batanin and Markl~\cite{Batanin-Markl:1404.3886} introduced the notion of operadic
category as a general approach to operad-like structures and their algebras. An
operadic category is a small category $\catO$ with a chosen terminal object in
each connected component (chosen local terminals, for short), equipped with a
cardinality functor $\norm{ \thg } : \catO \to \Fin$ (the skeletal category
of finite sets), and with a fibre structure: for each morphism $f: T \to S$ in
$\catO$ and for each element $j \in \norm{S}$, there is assigned an object denoted
$f^{-1}(j)$ (not necessarily given by preimage). This structure has to satisfy a
number of natural axioms which mimic the properties of inverse images
of maps between finite sets. The category $\Fin$ in this definition is
the category of finite ordinals $\underline {n} = \{1<2< \ldots <n\}, n\ge 0 $ and
arbitrary maps between them. Each operadic category $\catO$ has its own notion of 
operad over it (called $\catO$-operads), and each $\catO$-operad has 
a category of algebras~\cite{Batanin-Markl:1404.3886}.
The operads over the terminal operadic category $\Fin$ are precisely ordinary
symmetric operads.

The theory of operadic categories is a rather powerful machinery, not only as a
unifying theory for operad-like structures, but also for more specific applications:
in the original paper~\cite{Batanin-Markl:1404.3886} it was used to prove the duoidal
Deligne conjecture, shortly after it was exploited to prove the Baez--Dolan stabilisation
hypothesis~\cite{Batanin:1511.09130}, and more recently
\cite{Batanin-Markl:2105.05198}, \cite{Batanin-Markl:1812.02935},
\cite{Batanin-Markl-Obradovic:2002.06640} it was shown how the theory of Koszul
duality can be developed abstractly in the setting of operadic categories (see
\cite{Batanin-Markl:2212.14598} for an application of this machinery to the
blob-complex in topological quantum field theories).

While the versatility of the theory and the general results achieved with it 
substantiate that operadic categories are a worthwhile notion, it is also fair to say
that the notion has been considered somewhat mysterious: the definition contains 
subtleties and unexpected strictnesses, a consequence of which is that the
notion of operadic category is not invariant under equivalence of categories! -- for
this reason it can be considered a combinatorial notion rather than a categorical one.
In parallel with the development of the theory and its applications, the definition
itself has therefore been under some scrutiny, aiming to fit it into more general
categorical viewpoints. Lack~\cite{Lack:1610.06282} gave an interpretation of operadic
categories in terms of the notion of skew-monoidal category of
Szlach\'anyi~\cite{Szlachanyi:1201.4981}, and Garner, Kock, and
Weber~\cite{Garner-Kock-Weber:1812.01750} recasted the definition in terms of algebras
for the (upper) decalage comonad $\DDD$ on $\kat{Cat}$ and related structures. The
present contribution inscribes itself into these theoretical developments, taking the
paper \cite{Garner-Kock-Weber:1812.01750} as a starting point.

One main point in the approach of
\cite{Garner-Kock-Weber:1812.01750} is the observation that the
chosen-local-terminals structure on a category $\mathcal{C}$ amounts
precisely to saying that $\mathcal{C}$ is a $\DDD$-coalgebra. In
particular, operadic categories are always $\DDD$-coalgebras. A key idea
put forth in the Garner--Kock--Weber paper is that the remaining
structure and axioms of operadic categories are also related to $\DDD$,
and that altogether operadic-category structure on a category should be a
kind of simplicial undecking. The comonad $\DDD$ induces a monad
$\widetilde\DDD$ on $\DDD\kat{-Coalg}$, whose algebras are {\em unary}
operadic categories, meaning operadic categories where all objects have
cardinality $1$, so that every morphism has only one fibre (see
Batanin--Markl~\cite{Batanin-Markl:2212.14598},
Hackney~\cite{Hackney:2312.00756} and Trnka~\cite{Trnka:2410.05064} for
further study of the unary case, which is interesting in itself). To
account for the multi-aspect -- morphisms with more than one fibre --
\cite{Garner-Kock-Weber:1812.01750} introduced a certain modification of
$\DDD$, defined on the arrow category of $\Cat$, and showed that operadic
categories (not only unary ones) can be described as algebras for the
monad induced on coalgebras, subject to one condition.

\subsection*{Contributions of this paper}

In the present paper we develop further the undecking idea from
\cite{Garner-Kock-Weber:1812.01750}. Again the starting point is the $\DDD$-coalgebra
viewpoint, but now the multi-aspect is encoded in a different way, namely in terms of
the symmetric-monoidal-groupoid monad $\SSS$, which acts levelwise on 
simplicial groupoids. In slogan form, our main theorem states
that

\begin{quote}
  {\em Operadic-category structure on a category amounts to an
  undecking of its nerve in the Kleisli category of $\SSS$.}
\end{quote}

The upshot of this is that all the structure involved in the notion of operadic
category $\catO$ --- chosen local terminals, cardinality functor, and fibres ---
integrates into a single simplicial object $\X$ called the {\em operadic nerve of
$\catO$} related to the original category $\catO$ by
$$
\DDD \X = \SSS \strictnerve \catO ,
$$
and remarkably, all the operadic-category axioms end up as simplicial identities.
Since $\SSS$ on a set is a groupoid, the simplicial object is a simplicial groupoid
(not just a simplicial set), but the only non-identity morphisms are symmetries coming
from $\SSS$. These symmetries are a subtle point of the theory, and in fact $\X$ is
slightly pseudo, as we shall detail.

{\em The operadic nerve.} This undecking amounts more precisely to the construction of
the {\em operadic nerve} $\nop(\catO)$, which is considerably more elaborate than the
ordinary nerve. It is a pseudo-simplicial groupoid instead of just a simplicial set,
and it is not Segal, only upper $2$-Segal (in the sense of Dyckerhoff and
Kapranov~\cite{Dyckerhoff-Kapranov:1212.3563}). The construction can be taken in two
steps. The first step is the observation from \cite{Garner-Kock-Weber:1812.01750} that
the chosen-local-terminals structure amounts to $\DDD$-coalgebra structure -- in fact
we work directly with chosen-local-terminals structure, interpreted simplicially: it
means to add extra top degeneracy operators to the ordinary nerve, and also add a new
set in degree $-1$, getting a top-split augmented simplicial set. The second step,
where we depart from \cite{Garner-Kock-Weber:1812.01750}, is to use the fibre
structure of the operadic category $\catO$ to define new top face operators. Since for
a map $T \to S$ in $\catO$ there is one fibre for each element $j \in \norm{S}$ in the
cardinality of the codomain, we do not get a single fibre but a $\norm{S}$-indexed
family of fibres, so the new top face operators land in $\SSS$ of the split augmented
simplicial set, hence necessitating the Kleisli viewpoint. Note in particular that
both the fibre structure and the cardinality functor of operadic categories are
involved in this second part of the undecking construction.

It turns out that the new top face operators do not strictly satisfy the simplicial
identities, but the operadic-category axioms can be used to establish 
instead the coherence
involved in the statement 

\bigskip

\noindent {\bf Coherence Theorem.} (\ref{NOP}) {\em The operadic nerve $\nop(\catO)$ is a 
pseudo-simplicial groupoid.}

\bigskip

This result is a
main technical difficulty of the undecking construction and involves some delicate
$2$-category theory. Jardine~\cite{JARDINE1991103} famously worked out 17 
families of simplicial $2$-identities required to guarantee coherence for pseudo-simplicial
objects. In the present case it is not that bad, because except for the top face
operators, the simplicial object is actually strict. As a consequence, out of the 17 
simplicial $2$-identities, 14
are actually strict identities. The crucial non-strict $2$-identity amounts to a cube
diagram of $2$-cells, and we can establish it first for the operadic category $\Fin$ of
finite sets, by exploiting its finer factorisation properties, and second in the
general case by exploiting the fibre axioms of operadic categories.
The proof for $\Fin$ is already quite involved. Rather than providing a long 
direct check, we have preferred to derive it from general theory. This theory
we have outsourced to a separate companion paper {\em Pita factorisation in operadic 
categories}~\cite{Batanin-Kock-Weber:2512.22794}, since it is a theory of independent 
interest.

The coherence theorem shows that although the fibres of an operadic
category are defined individually, in isolation, they nevertheless
assemble into a coherent structure, which is the operadic nerve.

\bigskip

{\em Operadic categories as simplicial `sets'.} So far we have explained
the construction of the operadic nerve. It is perhaps more surprising
that conversely every pseudo-simplicial groupoid $\X$ with the special
properties identified, uniquely defines an operadic category. The main
surprise here is that the cardinality functor can be reconstructed
although it is not directly visible in the simplicial structure. It turns
out it is actually hidden in the coherence $2$-cells for the simplicial structure.

In detail, the operadic-category axioms concerning the chosen local terminals are
derived from simplicial identities of the additional top degeneracy operators in $\X$.
The axioms concerning the fibre functor we derive from simplicial identities (and
$2$-identities) of the additional top face operators in $\X$. The structure
and axioms relating to the cardinality functor are not immediately visible, but they
are encoded via $\SSS$ in the pseudo-ness of the simplicial groupoid $\X$. In
particular we see that this pseudo-ness is not just something one can suppress or
strictify: it is forced upon us in order to accommodate the original notion of
operadic category.
Ultimately the pseudo-ness originates with the skeletal nature of $\Fin$.

\bigskip	
	
{\em Isomorphism of categories.}
The equality $\DDD \X = \SSS \strictnerve \catO$ is readily refined to the statement 
that we have a strictly commutative diagram of categories and functors
\begin{equation}\label{eq:not-pbk}
\begin{tikzcd}
\kat{OpCat}  \ar[d, "\operatorname{forget}"'] \ar[rr, "\nop"] && 
\kat{TopPs}
\ar[d, "\DDD"]  \\
\kat{Cat} \ar[r, "\strictnerve"'] & {[\simplexcategoryop,\Set]} \ar[r, "\SSS"'] & 
{[\simplexcategoryop,\SMGp]} .
\end{tikzcd}
\end{equation}
Here the middle category is just that of simplicial sets, and the next one that of
simplicial symmetric-monoidal groupoids. 
We stress that $\SSS$ acts on simplicial groupoids levelwise: 
$\SSS\strictnerve\CC$ is different from
the (nerve of the) free symmetric monoidal category on $\CC$.
The category $\kat{TopPs}$ has as
objects top-pseudo-simplicial symmetric monoidal groupoids, meaning pseudo-simplicial
symmetric monoidal groupoids whose only non-trivial $2$-cells are certain $\beta$-cells
involving consecutive top face operators.

Th square \eqref{eq:not-pbk} is not quite a pullback, for two reasons: one problem is
with degeneracy maps, and the second is a subtle issue with the $\beta$-cells. The
first problem is that in the actual pullback category, strictness of the top
degeneracy operators of $\X : \simplexcategoryop\to\SMGp$ cannot be guaranteed, as
would be required to match the local-terminals structure of an operadic category. The
decalage functor $\DDD$ thus throws away a bit too much. This problem can be fixed by
including those top degeneracy operators in the `category part'. This means that
instead of just plain categories we are dealing with categories with chosen local
terminals, which form the category $\Catlt$. These have a nerve functor $\ltnerve
:\Catlt \to [(\tEM)\op,\Set]$ landing in presheaves on $\tEM$, meaning they are
top-split augmented simplicial sets, or equivalently, simplicial sets with missing top
face operators. Let $\uuu: \tEM \to \simplexcategory$ be the inclusion. Instead of
applying to $\X$ the full decalage (throwing away degree zero as well as both top
faces and top degeneracies), we apply the gentler $\uuu\upperstar$ which only throws
away the top face operators. This adjustment provides a factorisation of the above
square as
$$
\begin{tikzcd}
  \kat{OpCat} \ar[d, "\operatorname{forget}"'] \ar[rr, "\nop"] 
  && \kat{TopPs}
  \ar[d, "\uuu\upperstar"]
  \\
  \kat{Cat}_{lt}  \ar[d, "\operatorname{forget}"'] \ar[r, "\ltnerve"'] & 
  {[(\tEM)\op,\Set]} \ar[r, "\SSS"']  \ar[d] & {[(\tEM)\op,\SMGp]} 
  \ar[d]
  \\
  \kat{Cat} \ar[r, "\strictnerve"'] & {[\simplexcategoryop,\Set]} \ar[r, "\SSS"'] & {[\simplexcategoryop,\SMGp]}  .
\end{tikzcd}
$$

The second problem has to do with a subtle condition enjoyed by
operadic nerves, namely that the $\beta$-cells are actually shuffle permutations.
We define
$\kat{TopPs}^{\operatorname{fr+sh}} \subset \kat{TopPs}$ to be the subcategory consisting
of top-pseudo-simplicial symmetric monoidal groupoids that are levelwise free,
and for which the $\beta$-cells are shuffle permutations. (Note that once $\X$ is 
required to be levelwise free, $\beta$ is essentially a permutation, so it makes 
sense to impose the shuffle condition.)

The pullback that actually works is 
$$
\begin{tikzcd}
  \kat{OpCat} \drpullback \ar[d, "\operatorname{forget}"'] \ar[rr, "\nop"] 
  && \kat{TopPs}^{\operatorname{fr+sh}}
  \ar[d, "\uuu\upperstar"]
  \\
  \kat{Cat}_{lt}   \ar[r, "\ltnerve"'] & 
  {[(\tEM)\op,\Set]} \ar[r, "\SSS"']  & {[(\tEM)\op,\SMGp]} 
\end{tikzcd}
$$
Note that the `levelwise free' condition is redundant for the sake of computing the 
pullback, since everything in the image of $\SSS$ is levelwise free, but 
it is necessary to impose it separately on $\kat{TopPs}$ in order to be able to 
formulate the shuffle condition.
The fact that the shuffle condition holds 
for any operadic nerve is a subtle feature of operadic categories coming from
the cardinality functor and the `fibrewise order-preserving' condition of pita 
factorisation in $\Fin$.

We have arrived at a very conceptual characterisation
of operadic categories, which is
our main theorem:

\medskip

\noindent {\bf Main Theorem.} (\ref{pbk-thm}) {\em The category of operadic categories is
isomorphic to the strict pullback just described. }

\medskip

Explicitly, the objects of this pullback are pairs $(\X,\CC)$
where $\X$ is a top-pseudo-simplicial symmetric strict monoidal groupoid satisfying 
the shuffle condition, 
$\CC$ is a small category with chosen
local terminals, and we have the strict equality $\uuu\upperstar \X = \SSS
\ltnerve \CC$.
The morphisms from $(\X,\CC)$ to $(\X',\CC')$ are given by pairs $(F,f)$ where $F: \X \to
\X'$ is a strict simplicial map and $f: \CC\to\CC'$ is a functor, required to be
compatible in the sense that this diagram commutes:
	\[
	\begin{tikzcd}[column sep ={8mm,between origins}]
	\uuu\upperstar \X \ar[d, "\uuu\upperstar(F)"'] & = & \SSS \ltnerve \CC \ar[d, 
	"\SSS\ltnerve(f)"] \\
	 \uuu\upperstar \X' & = & \SSS \ltnerve \CC'  .
	\end{tikzcd}
	\]

	\bigskip

{\em Bialgebra viewpoint.} The mix of strict and weak simplicial identities may seem a
bit strange, but there is a way to explain it conceptually. Namely, from
\cite{Garner-Kock-Weber:1812.01750} we know that the category $\Catlt$ is equivalent
to the category of (strict) $\DDD$-coalgebras
in $\Cat$. Presently we consider the decalage comonad on simplicial groupoids (or on 
category objects in $\Grpd$).
The decalage comonad $\DDD$ induces a
monad $\widetilde\DDD$ on $\DDD\kat{-Coalg}$ (in
simplicial groupoids or category objects in $\Grpd$). This monad commutes with the 
symmetric-monoidal-groupoid monad
$\SSS$, so as to lift to the Kleisli category for $\SSS$ on $\DDD\kat{-Coalg}$. The
category of operadic nerves can now be described as (equivalent to) the category of
shuffle-type
normal pseudo $\widetilde\DDD$-algebras in the Kleisli category for $\SSS$
(Theorem~\ref{thm:bialg}). That we are in the category of $\DDD$-coalgebras account
for the strictness of all the lower part of the simplicial structure, and the fact
that we consider {\em normal} pseudo algebras corresponds to the strictness of the
identities $d_n s_{n-1} = \id$. The remaining pseudo-ness and the coherence is
precisely what is means to be a pseudo-algebra. (We explain these viewpoints in
Section~\ref{sec:D-bialg}.)

\bigskip

{\em Operads over an operadic category.} The raison d'\^etre of operadic
categories are the operads over them~\cite{BMEH},
\cite{Batanin-Markl:1404.3886}, \cite{Batanin-Markl:2105.05198},
\cite{Batanin-Markl:1812.02935}. Originally
in~\cite{Batanin-Markl:1404.3886}, these were defined to be collections
of sets (or objects in a symmetric monoidal category) indexed by the
objects of $\catO$, with each morphism in $\catO$, defining
an operation from the list of sets associated to the fibres to the set
associated to the codomain. The translation of this notion to the setting
of operadic nerves turns out to be quite neat:\footnote{At least in cartesian
settings. To define operads in an arbitrary symmetric monoidal category, 
or in even more
  general settings, will require to develop a theory of internal operads inside
  categorical operads similar to what has been done in
  \cite{BMEH,Batanin-Berger:1305.0086} and \cite{Weber:1503.07585} for internal
  algebras of polynomial monads. We so not do
  that here, as it would take us too far afield and risk obscuring the simplicity of
  the idea.} 

\bigskip

\noindent {\bf Theorem.} (\ref{IKEOoperad}) {\em An operad (in $\Set$) over $\catO$ is
the same thing as another operadic nerve together with a strict simplicial map to 
$\nop(\catO)$,
which is \ikeo and levelwise free.}

\bigskip

\noindent
\ikeo~\cite{Galvez-Kock-Tonks:2409.03742} stands for `inner Kan' and
`equivalence on objects', a condition on simplicial maps well known in
simplicial homotopy theory. (While this theorem is elegant, the real
insight in the proof is the characterisation of operads in terms of
discrete operadic fibrations from
Batanin--Markl~\cite{Batanin-Markl:1404.3886}. We are only observing 
that that insight fits well into our simplicial viewpoint.)

\subsection*{Outlook}

In addition to the general pleasure of expressing things simplicially, there are
several further, more tangible benefits of this new simplicial approach to operadic 
categories.

One benefit is simply practical: in some situations it
is just easier to establish the conditions of the operadic nerve than to check the
old axioms by hand. In fact there is already an application of the Main Theorem,
namely in the work of Cebrian and Forero~\cite{Cebrian-Forero:2211.07721} who leverage
our theorem to construct operadic categories from directed hereditary species,
building on work of Schmitt~\cite{Schmitt:hacs},
G\'alvez--Kock--Tonks~\cite{Galvez-Kock-Tonks:1708.02570}, and
Carlier~\cite{Carlier:1903.07964}. Where Carlier (in the non-directed case) had to go 
through all the axioms by hand, Cebrian and Forero can bypass this by invoking 
our Main Theorem.

But we think the main benefits are of a more theoretical nature, namely that the
simplicial viewpoint opens up the door for weaker and higher dimensional notions
of operadic categories. Work in this direction is
underway (see for example the recent preprint of Markl and 
Trnka~\cite{Markl-Trnka:2601.20322} on weak unary operadic categories). We briefly 
indicate three directions.

{\em Weak operadic categories.} In view of the Main Theorem, we can now {\em define}
operadic categories as simplicial groupoids $\X$ with certain properties, the key
property being the equality of $\DDD \X$ with $\SSS$ of the nerve of a small
category. From here it is natural to weaken this definition by allowing an equivalence
instead of an equality. This gives a more flexible -- and equivalence-invariant! --
notion of operadic category, where in particular any category of finite sets (not just
skeletal) can serve as basis. Such an equivalence-invariant notion of operadic
categories was promised already in the original paper~\cite{Batanin-Markl:1404.3886},
but turned out to be tricky to flesh out. Only recently (partly inspired by the
present work) was one approach to the problem proposed by
Markl~\cite{Markl:2502.09163}. It should be noted that specifying both the 
category $\CC$ and the equivalence $\theta: \DDD \X \simeq \SSS 
\strictnerve \CC$ as part of the data is equivalent to discarding both 
$\CC$ and $\theta$ and demanding instead a property, namely the Segal 
condition.

The weak notion of operadic category (in terms of weak operadic nerves)
is general enough to serve as recipient for the two-sided bar
construction of an operad, for example as defined for polynomial
monads~\cite{Kock-Weber:1609.03276}. The image of this
two-sided-bar-construction functor is different from the image of the
operadic nerve functor from ordinary operadic categories: the two-sided
bar construction of an operad is a {\em strict} simplicial groupoid $\X$
(which is Segal, but sometimes not Rezk
complete~\cite{Kock-Weber:1609.03276}), whereas the operadic nerve is
generally pseudo and not generally Segal.

{\em Categorical operadic categories.} The simplicial approach allows an upgrade to a
notion of operadic $2$-category, essentially by replacing the usual nerve of a
$1$-category with the $2$-nerve of a $2$-category: a
categorical operadic category can be defined to be a simplicial category 
$\X$ with
$$
\DDD \X \simeq \SSS \mathfrak{N} \mathcal{K}  ,
$$
where $\mathcal{K}$ is a $2$-category and $\mathfrak{N}$ is the Duskin--Street nerve.
Some steps in this direction have been taken already by
Trnka~\cite{Trnka:2410.05064,Trnka:2506.12399}.

In forthcoming work
it is shown that
there is an operadic Grothendieck construction which to any operad over any operadic
category associates a categorical operadic category. Such categorical operadic
categories (or operadic $2$-categories) are important also for the theory of
ordinary operadic
$1$-categories, because they are the most general setting in which one can define operads
over an operadic category, rather than defining them in symmetric monoidal categories.
(There is a construction of an operadic $2$-category from any symmetric monoidal
category that makes the classical notion of operad in a symmetric monoidal category a
special case of this fancier notion.)

{\em Operadic $\infty$-categories.} Finally, the simplicial characterisation of
operadic categories can be copied almost verbatim to the realm of $\infty$-categories
in the sense of Segal spaces to give a definition of operadic $\infty$-category. Here
simplicial spaces stand in for both simplicial sets and simplicial groupoids, and
there is no need to say `pseudo': an operadic $\infty$-category is a simplicial space
$\X$ such that $\DDD \X$ is (levelwise) equivalent to $\SSS$ of a Segal space.
Examples of this notion come from symmetric $\infty$-operads (in the form of analytic
monads over $\infty$-groupoids~\cite{Gepner-Haugseng-Kock:1712.06469}, by way of an
$\infty$-version of the two-sided bar construction).

Beyond $\infty$-operads, an important motivation for this extension of the theory is
that the collection of configuration spaces (in the sense of Boavida and
Weiss~\cite{Boavida-Weiss:1502.01640}) is (strongly suspected to be) an operadic
$\infty$-category.

\section{Operadic categories}

\subsection*{The category $\Fin$}
We denote by $\Fin$ the skeletal category of finite sets, in which 
the objects are the finite ordinals $\un{n} = \{1,2,\ldots,n\}$ and 
the arrows are arbitrary set maps (not necessarily order-preserving).
Note that $\Fin$ has a \emph{unique} terminal
object $\un 1$, which we use to endow $\Fin$ with chosen local terminal
objects.

Given $g : \un n \rightarrow \un k$ in $\Fin$ and $i \in
\un k$, the inverse image $\{\,j \in \un n \mid g(j) = i\,\}$ is not in
general an object in the skeletal category $\Fin$, but as a subset of $\un n$
it inherits a linear order, and there is a unique order-preserving bijection with some object in
$\Fin$ which we denote $g^{-1}(i)$ and call the \emph{fibre of $g$ at $i$}.
We denote by
\begin{equation}\label{eq:10}
  \epsilon_{g, i} : g^{-1}(i) \rightarrow \un n
\end{equation}
the unique order-preserving injection whose image is $\{\,j \in \un n \mid
g(j) = i\,\}$. This is the pullback
\[
\begin{tikzcd}
g^{-1}(i) \drpullback \ar[d] \ar[r, "\epsilon_{g,i}"] & \un n \ar[d, "g"]  \\
\un 1 \ar[r, "\name{i}"'] & \un k
\end{tikzcd}
\]
in the category $\Fin$.

Given also
$f \colon \un m \rightarrow \un n$ in $\Fin$, we
write $f_{i}$ for the unique map of $\Fin$ rendering
\[
\begin{tikzcd}
(gf)^{-1}(i) \drpullback \ar[d, dotted, "f_i"'] \ar[r, "\epsilon_{gf,i}"] & 
\un m \ar[d, "f"]  \\
g^{-1}(i) \drpullback \ar[d] \ar[r, "\epsilon_{g,i}"] & \un n \ar[d, "g"]  \\
\un 1 \ar[r, "\name{i}"'] & \un k
\end{tikzcd}
\]
commutative, and call it the \emph{fibre map of $f$ with respect to
  $g$ at $i$}.

\bigskip

The order-preserving injections
$\epsilon_{g, i} : g^{-1}(i) \rightarrow \un n$
assemble into a single bijection
\begin{equation}\label{pi}
\pi_g : \un n \isopil \sum_i g^{-1}(i)
\end{equation}
expressing $\un n$ as a sum of its fibres. This provides altogether the {\em
pita factorisation}~\cite{Batanin-Kock-Weber:2512.22794}:  {\em every arrow 
$g: \un n \to
\un k$ factors uniquely as
\[
\begin{tikzcd}
\un n \ar[dd, "g"'] \ar[rd, "\pi_g"] &   \\
 & \un n' \ar[ld, "\eta_g"] &[-33pt] = \sum_i g^{-1}(i)  \\
 \un k &
\end{tikzcd}
\]
where $\eta_g$ is order-preserving and $\pi_g$ is bijective and 
fibrewise order-preserving.}
Note that here $\un n'$ is the same object as $\un n$ (since $\Fin$ is 
skeletal), but it is practical to keep it distinct in the notation to remember that the
natural identification with $\un n$ is given by $\pi_g$, not by the identity map.

The pita factorisation does not constitute a factorisation system, because the
left-hand component is not described intrinsically (the condition `order-preserving on
fibres' is local to the codomain), but it nevertheless has many nice properties, and
is functorial to some extent~\cite{Batanin-Kock-Weber:2512.22794}: for any composable pair
of maps $\un m \stackrel{f}\to \un n \stackrel{g}\to \un k$,
if both $gf$ and $g$ are
pita-factorised, then there is a unique comparison map $\eta(f/g)$ as in this diagram:
\begin{equation}\label{func-pita}
\begin{tikzcd}[column sep={24pt,between origins}, row sep={36pt,between origins}]
\un m \ar[rrrr, "f"] \ar[rd, "\pi(gf)"']&&&& \un n \ar[ld, "\pi(g)"] \\
& \cdot \ar[rr, dashed, "\eta(f/g)"] \ar[rd, "\eta(gf)"']&& \cdot \ar[ld, "\eta(g)"] & \\
&& \un k .&&
\end{tikzcd}
\end{equation}
Also, the factorisation interacts well with ordinal sum: for 
morphisms $g_1$ and $g_2$ we have
\begin{eqnarray*}
\pi(g_1+g_2) &=& \pi(g_1)+\pi(g_2)\\
\eta(g_1+g_2) &=& \eta(g_1)+\eta(g_2) \,,
\end{eqnarray*}
and the same goes for sums of maps $\eta(f/g)$ arising for composable 
chains as above.

Pita factorisation is an important technical aspect of the theory
of operadic categories. In the companion paper 
\cite{Batanin-Kock-Weber:2512.22794} we study pita factorisation in more 
general operadic categories, and prove a general coherence result 
(reproduced below as Proposition~\ref{pitapaper-prop}), which 
we rely upon in the present paper.  The above basic results can also be 
found in \cite{Batanin-Kock-Weber:2512.22794}.

\subsection*{Axioms for operadic category}

Since we are going to dig into the details of the axioms, we recall the
definition of operadic category of Batanin and
Markl~\cite{Batanin-Markl:1404.3886}. The notion can be seen as
specifying a category with \emph{formal} notions of fibre and fibre map.
The fibres of a map need not be subobjects of the domain as in the case
of $\Fin$, but the axioms will ensure that they retain many important
properties of fibres in $\Fin$.

\begin{definition}
  \label{def:operadic-category}\cite{Batanin-Markl:1404.3886}
  An \emph{operadic category} is given by the following data:
  \begin{enumerate}[label=(D\arabic*)]
  \item \label{data:operadic-Q1} A category $\catO$ endowed with chosen local
    terminal objects; we write $\tau_R:R \to U_R$ for the unique 
    morphism to the chosen terminal object of the component that $R$ belongs 
    to.
    \item \label{data:operadic-Q2} A \emph{cardinality functor}
      $\norm{\thg} \colon \catO \to \Fin$;
    \item \label{data:operadic-Q3} For each object $R\in \catO$ and each
      $i \in \norm{R}$ a \emph{fibre functor}
      $$\fib_{R,i} \colon \catO/R \to \catO$$
      whose action on objects and morphisms we denote as follows:
      \begin{align*}
        \cd[@C1.3em]{
          S \ar[rr]^-{g} && R
        } \qquad &\mapsto \qquad g^{-1}(i)\\
        \cd[@C1em@R-0.7em]{{T} \ar[rr]^-{f} \ar[dr]_-{gf} & &
          {S} \ar[dl]^-{g} \\ &
          {R}} \qquad &\mapsto \qquad
        f^g_{i} \colon (gf)^{-1}(i) \to g^{-1}(i)\rlap{ ,}
      \end{align*}
      referring to the object $g^{-1}(i)$ as the \emph{fibre
      of $g$ at $i$}, and the morphism
    $f_{i} \colon (gf)^{-1}(i) \to g^{-1}(i)$ as the \emph{fibre
      map of $f$ with respect to $g$ at~$i$};
  \end{enumerate}
  all subject to the following axioms, where
  in~\ref{axQ:BM-fibres-of-local-fibres}, we write $\epsilon j$
  for the image of $j \in {\norm g}^{-1}(i)$ under the map
  $\epsilon_{\norm g, i} \colon {\norm g}^{-1}(i) \rightarrow \norm S$
  of~\eqref{eq:10}:

  \begin{enumerate}[label=(A\arabic*)]

	\item \label{axQ:BM-abs(lt)} If $R$ is a local terminal then
    $\norm{R}=\un 1$;

	\item \label{axQ:BM-fibres-of-identities} For all $R \in \catO$ and
    $i \in \norm R$, the object $(\id_R)^{-1}(i)$ is chosen local terminal;

	\item \label{axQ:BM-67} For all $g \in \catO / R$ and $i \in \norm R$,
    one has $\norm{\smash{g^{-1}(i)}} = {\norm g}^{-1}(i)$, while for
    all $f \colon gf \rightarrow g$ in $\catO / R$ and $i \in \norm R$,
    one has $\norm{\smash{f_{i}}} = \smash{\norm{f}_{i}}$;

	\item \label{axQ:BM-fibres-of-tau-maps} For $R \in \catO$ and for the unique 
	element $1\in \norm{U_R}$, one has
    $\tau_R^{-1}(1) = R$, and for $g \colon S \to R$, one has
    $g_1 = g$;

	\item \label{axQ:BM-fibres-of-local-fibres} For
    $f \colon gf \rightarrow g$ in $\catO/R$, $i \in \norm R$ and
    $j \in \norm{g}^{-1}(i)$, one has that
    $(f_i)^{-1}(j) = f^{-1}(\epsilon j)$, and given also
    $h \colon gfh \rightarrow gf$ in $\catO / R$, one has
    $(h_i)_{j} = h_{\epsilon j}$.
  \end{enumerate}
  
\end{definition}
  
The preceding definitions are exactly those
of~\cite{Batanin-Markl:1404.3886} with only some slight repackaging changes
as in \cite{Garner-Kock-Weber:1812.01750}. For subtle variations on the axioms, and 
slightly different notions of operadic category,
see Lack~\cite{Lack:1610.06282} and Markl~\cite{Markl:2502.09163}.

The category $\Fin$ has a unique (and hence chosen) terminal object.
Axiom~\ref{axQ:BM-abs(lt)} now says that the cardinality functor 
preserves chosen local terminals.
Any slice category has a canonical choice of terminal object, 
namely the identity arrow. With respect to this choice, Axiom~\ref{axQ:BM-fibres-of-identities}
says that the fibre functor preserves chosen local terminals.
Axiom~\ref{axQ:BM-67} says that 
for $R \in \catO$ and $i \in \norm R$, the square 
\begin{equation}
\label{diag:fibres-and-cardinalities}
\begin{aligned}{ 
\xygraph{!{0;(2,0):(0,.5)::}
{\catO_{\smash{/R}}}="p0" [r] {\catO}="p1" [d] {\Fin}="p2" [l] {\Fin_{\smash{/|R|}}}="p3"
"p0" :"p1"^-{\fib_{R,i}} :"p2"^-{\norm{\thg}} 
:@{<-}"p3"^-{\fib_{|R|,i}} :@{<-}"p0"^-{\norm{\thg}_{/R}}}}
\end{aligned}
\end{equation}
commutes.
Axiom~\ref{axQ:BM-fibres-of-tau-maps} says that $\fib_{R,i}$ is the
domain functor whenever $R \in \catO$ is chosen local terminal.
Intuitively, the first clause of \ref{axQ:BM-fibres-of-local-fibres}
identifies the fibres of the fibre maps of a map, with the fibres of
that map. 
The second part of \ref{axQ:BM-fibres-of-local-fibres} says that the fibre maps of
the fibre maps of a map are themselves fibre maps of that map. For the 
details of these interpretations, 
see~\cite{Garner-Kock-Weber:1812.01750}.

  \begin{definition} A functor $F \colon \catP \rightarrow \catO$ between operadic
  categories is called a \emph{strict operadic functor} when it
  \begin{enumerate}
	\item strictly commutes with the cardinality functors to $\Fin$, 
	\item strictly preserves local terminal objects,  
	\item strictly preserves fibres and fibre maps, in the sense that
	\begin{equation*}
	  F(g^{-1}(i)) = (Fg)^{-1}(i) \qquad \text{and} \qquad F(f_{i}) =
	  (Ff)_{i}
	\end{equation*}
	for all $f \colon gf \rightarrow g$ in $\catP/R$ and $i \in \norm R$. 
  \end{enumerate}
  
  We write $\OpCat$ for the category of operadic categories and strict
  operadic functors.
\end{definition}

The definition of operad over an operadic category 
(cf.~\cite{Batanin-Markl:1404.3886}) will be postponed to 
Section~\ref{sec:operads}.

\section{Split augmented simplicial objects}

We need to set up some notation, and recall a few basic facts.

As always, let $\simplexcategory$ be the category of non-empty finite standard
ordinals and order-preserving maps. We shall need also two other base categories
closely related to $\simplexcategory$. Let $\tEM$ be the category of non-empty finite
standard ordinals with a top element and top-preserving order-preserving maps. Let
$\tKLEISLI$ be the full subcategory of $\tEM$ consisting of those objects where the
top element is not the only element.

The forgetful functor $\uuu :\tEM \to \simplexcategory$ has a left adjoint which
freely adds a top element. Let $\ttt$ denote the monad on $\simplexcategory$ induced
by this adjunction. The forgetful functor $\uuu$ is in fact monadic, so that $\tEM$
becomes the category of Eilenberg--Moore algebras for $\ttt$, and $\tKLEISLI$ is the
Kleisli category (the full subcategory of free algebras), hence the notation for these
two categories.

The left adjoint free functor thus factors as
\[
\begin{tikzcd}[column sep={3em,between origins}]
& \tEM  \ar[dd, "\uuu"] \\
\tKLEISLI \ar[ru, "\iii"]  \ar[r, phantom, "\isleftadjointto" description] & {}\\
& \simplexcategory \ar[lu, "\kkk"]
\end{tikzcd}
\]
where $\kkk$ is identity-on-objects and $\iii $ is fully faithful.
We let the functors $\kkk$ and $\iii$ dictate the naming conventions for the objects in the 
three categories: we thus have
$$
\kkk[n] = [n] \qquad \iii[n] = [n] \qquad \uuu[n]=[n+1] .
$$
The category $\tEM$ has an extra object denoted $[-1]$ corresponding to the
linear-order-with-top-element consisting of only the top element, but because of the 
$+1$ in the definition, the functor $\uuu$ is still bijective on objects.

\medskip

We now consider presheaves with values in some category $\EE$, denoting 
presheaf categories by $\PrSh$.

Objects in $\PrSh(\simplexcategory)$ are of course simplicial objects;
we picture them by drawing the first few face and degeneracy operators
\begin{center}
  \begin{tikzcd}[column sep={7em,between origins}]
    \phantom{\X_{-1}} & {\X_0} & {\X_1} & {\X_2} & {}
    \arrow["{s_0}"{description, inner sep = .5pt}, shorten <=5pt, shorten >=5pt, from=1-2, to=1-3]
    \arrow["{d_1}"', shift right=2.5, from=1-3, to=1-2]
    \arrow["{d_0}", shift left=2.5, from=1-3, to=1-2]
    \arrow["{s_1}"{description, inner sep = .5pt, pos=0.4}, shift left=2.5, shorten <=5pt, 
	shorten >=5pt, from=1-3, to=1-4]
    \arrow["{s_0}"{description, inner sep = .5pt, pos=0.4}, shift right=2.5, shorten <=5pt, 
	shorten >=5pt, from=1-3, to=1-4]
    \arrow["{d_1}"{description, inner sep = .5pt, pos=0.4}, from=1-4, to=1-3]
    \arrow["{d_2}"', shift right=5, from=1-4, to=1-3]
    \arrow["{d_0}", shift left=5, from=1-4, to=1-3]
    \arrow["\cdots"{description}, phantom, from=1-5, to=1-4]
  \end{tikzcd}
\end{center}

Objects in $\PrSh(\tKLEISLI)$ are called {\em top-split simplicial
objects}; the diagram of their generating face and degeneracy operators starts like this:
\begin{center}
  \begin{tikzcd}[column sep={7em,between origins}]
    \phantom{\X_{-1}} & {\X_0} & {\X_1} & {\X_2} & {}
    \arrow["{s_0}"{description, inner sep = .5pt}, shorten <=5pt, shorten >=5pt, from=1-2, to=1-3]
    \arrow["{d_1}"', shift right=2.5, from=1-3, to=1-2]
    \arrow["{d_0}", shift left=2.5, from=1-3, to=1-2]
    \arrow["{s_1}", bend left=25, shift left=2, from=1-2, to=1-3, shorten <=5pt, shorten >=5pt]
    \arrow["{s_1}"{description, inner sep = .5pt, pos=0.4}, shift left=2.5, shorten <=5pt, 
	shorten >=5pt, from=1-3, to=1-4]
    \arrow["{s_0}"{description, inner sep = .5pt, pos=0.4}, shift right=2.5, shorten <=5pt, 
	shorten >=5pt, from=1-3, to=1-4]
    \arrow["{d_1}"{description, inner sep = .5pt, pos=0.4}, from=1-4, to=1-3]
    \arrow["{d_2}"', shift right=5, from=1-4, to=1-3]
    \arrow["{d_0}", shift left=5, from=1-4, to=1-3]
    \arrow["\cdots"{description}, phantom, from=1-5, to=1-4]
    \arrow["{s_2}", bend left=25, shift left=5, from=1-3, to=1-4, shorten <=5pt, shorten >=5pt]
  \end{tikzcd}
\end{center}
The extra top sections satisfy the simplicial identities corresponding to letting
the indices go one higher than usual for degeneracy operators.

Objects in $\PrSh(\tEM)$ are called {\em top-split augmented
simplicial objects}; they are drawn with
\begin{center}
  \begin{tikzcd}[column sep={7em,between origins}]
    {\X_{-1}} & {\X_0} & {\X_1} & {\X_2} & {}
    \arrow["{d_0}", from=1-2, to=1-1]
    \arrow["{s_0}", bend left=25, shift right=1, shorten <=5pt, shorten >=5pt, from=1-1, to=1-2]
    \arrow["{s_0}"{description, inner sep = .5pt}, shorten <=5pt, shorten >=5pt, from=1-2, to=1-3]
    \arrow["{d_1}"', shift right=2.5, from=1-3, to=1-2]
    \arrow["{d_0}", shift left=2.5, from=1-3, to=1-2]
    \arrow["{s_1}", bend left=25, shift left=2, from=1-2, to=1-3, shorten <=5pt, shorten >=5pt]
    \arrow["{s_1}"{description, inner sep = .5pt, pos=0.4}, shift left=2.5, shorten <=5pt, shorten >=5pt, from=1-3, to=1-4]
    \arrow["{s_0}"{description, inner sep = .5pt, pos=0.4}, shift right=2.5, shorten <=5pt, shorten >=5pt, from=1-3, to=1-4]
    \arrow["{d_1}"{description, inner sep = .5pt, pos=0.4}, from=1-4, to=1-3]
    \arrow["{d_2}"', shift right=5, from=1-4, to=1-3]
    \arrow["{d_0}", shift left=5, from=1-4, to=1-3]
    \arrow["\cdots"{description}, phantom, from=1-4, to=1-5]
    \arrow["{s_2}", bend left=25, shift left=5, from=1-3, to=1-4, shorten <=5pt, shorten >=5pt]
  \end{tikzcd}
\end{center}

\begin{proposition}\label{splitsimp}
  When the category $\EE$ is idempotent-complete, the pair of adjoint functors
  $(\iii\op)_!\dashv (\iii\op)\upperstar$ defines an adjoint equivalence between
  split simplicial objects and split augmented simplicial objects in $\EE$.
\end{proposition}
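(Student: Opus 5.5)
The plan is to reduce the statement to a Cauchy-completion argument for the fully faithful functor $\iii:\tKLEISLI\to\tEM$. The underlying general fact is this: if $j:A\to B$ is fully faithful and every object of $B$ is a retract of an object in the image of $j$, then restriction along $j$ is an equivalence between presheaf categories with values in an idempotent-complete $\EE$. In that situation $B$ lies inside the Cauchy completion of $A$. Since $\iii$ is fully faithful, the unit $\X\to(\iii\op)\upperstar(\iii\op)_!\X$ is an isomorphism for every $\X\in\PrSh(\tKLEISLI)$, so it remains to prove that the counit is invertible. Here the left Kan extension $(\iii\op)_!$ is computed pointwise wherever the relevant colimits exist in $\EE$; one of the steps below shows they always do.

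The first step identifies the only object of $\tEM$ missing from the image of $\iii$, namely $[-1]$ (the ordinal consisting of the top element alone). I exhibit it as a retract of $[0]=\{0<\top\}$:
\begin{itemize}
\item the top-preserving map $[-1]\to[0]$ sending $\top\mapsto\top$;
\item the unique map $[0]\to[-1]$.
\end{itemize}
Their composite $[-1]\to[-1]$ is the identity. The other composite $e:[0]\to[0]$ sends both elements to $\top$, and it is idempotent. Dually, in a presheaf $\X$ the arrows $d_0:\X_0\to\X_{-1}$ and $s_0:\X_{-1}\to\X_0$ satisfy $d_0s_0=\id$, and $\X_{-1}$ is a splitting of the idempotent $s_0d_0=\X(e)$ on $\X_0$.

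The second step constructs the extension explicitly. Given a top-split simplicial object $\X$, define $\X_{-1}$ as a chosen splitting of the idempotent $\X(e)$, which exists because $\EE$ is idempotent-complete. The action of a map $[n]\to[-1]$ or $[-1]\to[n]$ is then defined by factoring it through $[0]$ and composing with the splitting maps. Functoriality holds because every map into or out of $[-1]$ factors through $[0]$ compatibly with $e$. Concretely, a map $[-1]\to[n]$ must hit the top, so it equals (the map $[0]\to[n]$ sending $0,\top\mapsto\top$) composed with the section $[-1]\to[0]$. Uniqueness up to unique isomorphism follows because a split augmented simplicial object $\Y$ is forced to have $\Y_{-1}$ be a splitting of $\Y(e)$, and splittings of idempotents are unique up to unique isomorphism. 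This splitting is exactly the colimit defining $(\iii\op)_!\X$ at $[-1]$, being the coequaliser of $\id$ and $\X(e)$, so the pointwise left Kan extension exists. The counit $(\iii\op)_!(\iii\op)\upperstar\Y\to\Y$ is then the identity in degrees $\ge0$ and the canonical comparison of two splittings of $\Y(e)$ in degree $-1$, hence an isomorphism.

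The main obstacle is the bookkeeping in the second step: checking that all composites through $[-1]$ in $\tEM$ factor through $[0]$ in a way compatible with $e$, so that the extended data really forms a functor. To keep this manageable I would avoid verifying simplicial identities by hand. Instead I would invoke the general lemma that when $A\subset B$ is full and every object of $B$ is a retract of an object of $A$, then $[B\op,\EE]\simeq[A\op,\EE]$ for idempotent-complete $\EE$, with both functors given by restriction and left Kan extension. That lemma's proof is the standard Karoubi-envelope argument: presheaves on $A$ with values in $\EE$ extend uniquely to the idempotent completion of $A$. The retraction above is then the only input specific to this setting.
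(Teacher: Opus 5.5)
Your proposal is correct. At its core it splits the same idempotent as the paper: your $\X(e)$ is exactly the paper's $d_0s_1$, because the coface $[0]\to[1]$ skipping $0$ composed with the extra top codegeneracy $[1]\to[0]$ is $e$.

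The two arguments put the work in different places:
\begin{itemize}
\item The paper stays inside the presheaf. From $d_1s_1=\id$ and $d_0s_1d_1=d_0s_1d_0$ it gets idempotence of $d_0s_1$. It then notes that $(i,s_1)$ exhibits the splitting $q$ as a split, hence absolute, coequaliser of $(d_1,d_0)$. This identifies the degree $-1$ object with $\pi_0(\X)$, the interpretation used right afterwards for nerves of categories. It leaves functoriality of the extension and the equivalence itself implicit in the phrase ``the only way to extend''.
\item You place the retraction $[-1]\to[0]\to[-1]$ in the indexing category, so that $\tEM$ lies in the Cauchy completion of $\tKLEISLI$. The standard Karoubi-envelope lemma then supplies functoriality, uniqueness up to unique isomorphism, and the adjoint equivalence all at once. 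This is cleaner and more general, but it does not by itself exhibit $\X_{-1}$ as a coequaliser of $(d_1,d_0)$.
\end{itemize}

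One assertion needs a line of proof rather than a bare statement: that the pointwise colimit at $[-1]$ is the coequaliser of $\id$ and $\X(e)$. It holds for the following reasons:
\begin{itemize}
\item $[-1]$ is initial in $\tEM$, so the relevant comma category is all of $(\tKLEISLI)\op$.
\item A cocone is therefore determined by its $[0]$-component via any $u\colon[0]\to[n]$, and that component must absorb $\X(e)$.
\item The choice of $u$ does not matter, because $u\circ e$ is the constant-at-top map for every $u$.
\end{itemize}
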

	
\begin{proof}
  Let $\X: (\tKLEISLI)\op \to \EE $ be a split simplicial object. We are going to show that
  $(\iii\op)_!$ can be constructed as an absolute coequaliser in such a way that $\X$
  extends further to $(\tEM)\op$ in an essentially unique way, using the assumption that
  idempotents split
  in $\EE$. Given a diagram in $\EE$ as on the left
  \begin{equation*}
    \xygraph{!{0;(2,0):(0,.5)::}
      {A}="p1" [r] {B}="p2"
      "p1" :@<2.25ex>"p2"|-{s_1} :@<-.75ex>"p1"|-{d_1} "p2":@<.75ex>"p1"|-{d_0}
      "p2" [r] 
      {Q}="q0" [r] {A}="q1" [r] {B}="q2"
      "q1" :@<2.25ex>"q2"|-{s_1} :@<-.75ex>"q1"|-{d_1} "q2":@<.75ex>"q1"|-{d_0}
      "q1" :"q0"|-{q} "q0" :@<1.5ex>"q1"|-{i}
    }
  \end{equation*}
  such that $d_1s_1 = \id_A$ and $d_0s_1d_1 = d_0s_1d_0$, then $d_0s_1$ is idempotent.
  Splitting this idempotent produces $Q$, $q$ and $i$ as on the right in the previous
  display, such that $qi = \id_Q$ and $iq = d_0s_1$. Note moreover that $q$ is the
  coequaliser of $(d_1,d_0)$ since $(i,s_1)$ exhibits it as a split coequaliser (see
  \cite{MacLane:categories} Chapter VI.6).
	
  Applied to the present situation, this says that the only way
  to extend $\X$ further to $(\tEM)\op \to \EE$ is to split the idempotent
  $d_0s_1$ to produce the required extra data. We denote the resulting extension, at
  the level of generating morphisms, as
  \begin{equation}\label{diag:all-simp-data-D-coalgebra}
    \begin{aligned}
      \xygraph{!{0;(2,0):(0,.5)::}
          {\pi_0(\X)}="p0" [r] {\X_0}="p1" [r] {\X_1}="p2" [r] {\X_2}="p3" [r(.5)] {...}="p4"
          "p1" :@<3ex>"p2"|-{s_1} :@<-1.5ex>"p1"|-{d_1} :"p2"|-{s_0} :@<1.5ex>"p1"|-{d_0}
          "p2" :@<4.5ex>"p3"|-{s_2} :@<-3ex>"p2"|-{d_2} :@<1.5ex>"p3"|-{s_1} :"p2"|-{d_1} :@<-1.5ex>"p3"|-{s_0} :@<3ex>"p2"|-{d_0}
            "p1" :"p0"|-{q} :@<1.5ex>"p1"|-{i}}
    \end{aligned} \qedhere
  \end{equation}
\end{proof}

When $\EE=\Set$ and $\X$ is the nerve of a category, note that $\pi_0(\X)$, as the
coequaliser of $(d_1,d_0)$, is the set of connected components of the category $\X$.

\bigskip

{\em Decalage.} The adjunction 
\[
\begin{tikzcd}[column sep={3em,between origins}]
& \PrSh(\tEM) \ar[ld, "\iii \upperstar"'] \\
\PrSh(\tKLEISLI) \ar[rd, "\kkk \upperstar"'] \ar[r, phantom, "\isleftadjointto" description] & {}\\
& \PrSh(\simplexcategory) \ar[uu, "\uuu \upperstar"']
\end{tikzcd}
\]
defines a comonad on $\PrSh(\simplexcategory)$ which is the {\em (upper) decalage
comonad}
$$
\DDD := \ttt\upperstar = (\iii \kkk )\upperstar \uuu \upperstar : \PrSh(\simplexcategory) \to \PrSh(\simplexcategory).
$$

There is an isomorphism of categories
$$
\DDD\kat{-Coalg} \simeq \PrSh(\tKLEISLI) .
$$
This follows from general principles, namely the universal property of the Kleisli 
object~\cite{Street:formal-monads}. (It is also instructive to work out by hand:
In
explicit terms, a $ \tKLEISLI$-presheaf $\A$ has an underlying simplicial space
$\X:=\kkk \upperstar (\A)$ possessing extra top degeneracies; these
assemble into a simplicial map $\gamma: \X \to \DDD(\X) $ which is the
structure map of a $\DDD$-coalgebra: the split-simplicial identities
satisfied by the extra top degeneracies correspond precisely to the coalgebra
axioms.)

Note that decalage restricts from simplicial sets to $\Cat$. The next result describes
the $\DDD$-coalgebras in $\Cat$.

	\begin{proposition}[Garner--Kock--Weber~\cite{Garner-Kock-Weber:1812.01750}]
		\label{prop:Dcoalg}
	
		For a category $\catO$ the following structures are equivalent:
		\begin{enumerate}
			\item $\DDD$-coalgebra structure on $\catO$ (or on $\strictnerve\catO$).
			\item A choice of local terminal objects in $\catO;$
			\item Splitting structure on the nerve $\strictnerve\catO;$
			\item An augmented splitting structure on $\strictnerve\catO$;
		\end{enumerate}
	\end{proposition}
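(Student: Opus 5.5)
We prove the equivalences in the order (1)$\Leftrightarrow$(3)$\Leftrightarrow$(4), which are formal, and then (3)$\Leftrightarrow$(2), which is the actual content.

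\textbf{The formal equivalences.} For (1)$\Leftrightarrow$(3) we use the isomorphism $\DDD\kat{-Coalg}\simeq\PrSh(\tKLEISLI)$ recorded above. A $\DDD$-coalgebra structure on $\strictnerve\catO$ is the same as extra top degeneracies $s_{n+1}:\X_n\to\X_{n+1}$ satisfying the top-split simplicial identities, that is, a splitting structure on the nerve. Since $\DDD$ restricts to $\Cat$ and $\strictnerve$ is fully faithful, coalgebra structure on $\catO$ and on $\strictnerve\catO$ agree. For (3)$\Leftrightarrow$(4) we apply Proposition~\ref{splitsimp} with $\EE=\Set$, which is idempotent-complete. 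Since $(\iii\op)\upperstar$ is part of an adjoint equivalence, a split structure extends to an augmented split structure essentially uniquely. We fix the choice $\X_{-1}=\pi_0(\X)$, the canonical splitting of $d_0s_1$, so that the extension is unique on the nose.

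\textbf{From local terminals to a splitting.} Given chosen local terminals $\tau_R:R\to U_R$, define $s_1:\X_0\to\X_1$ by $R\mapsto\tau_R$. More generally, define
\[
s_{n+1}(R_0\to\cdots\to R_n)=(R_0\to\cdots\to R_n\xrightarrow{\tau_{R_n}}U_{R_n}),
\]
which appends the terminal map. We then check the identities in turn.
\begin{itemize}
\item $d_{n+1}s_{n+1}=\id$ holds trivially.
\item $d_is_{n+1}=s_nd_i$ for $i\le n$ is clear except at $i=n$. There it needs $\tau_{R_n}\circ f=\tau_{R_{n-1}}$, which is uniqueness of maps into a local terminal lying in the same component.
\item $s_{n+2}s_{n+1}=s_{n+1}s_{n+1}$ needs $\tau_{U_R}=\id_{U_R}$, which holds because $U_R$ is itself the chosen terminal of its component.
\item The identities with the ordinary degeneracies are immediate.
\end{itemize}

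\textbf{From a splitting back to local terminals.} Given top degeneracies, set $U_R:=d_0s_1(R)$ and $\tau_R:=s_1(R)$; the identity $d_1s_1=\id$ makes $\tau_R$ a map $R\to U_R$. The main step is to show that $U_R$ is terminal in its component and depends only on the component.
\begin{itemize}
\item The identity $d_0s_1d_1=d_0s_1d_0$, applied to a $1$-simplex $f:R\to S$, gives $U_R=U_S$. Hence $U$ is constant on components.
\item The identity $d_1s_2=s_1d_1$ on a $1$-simplex $f$ shows that the $2$-simplex $s_2(f)$ witnesses $\tau_S\circ f=\tau_R$.
\item Using $s_1$ on $U$ and $s_2s_1=s_1s_1$, we get $\tau_{U}=\id_U$.
\end{itemize}
Now take any $g:R\to U$ with $U$ in the image. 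Applying the second point to $g$ gives $\tau_U\circ g=\tau_R$, so $g=\tau_R$. Thus each $U$ is terminal in its component.

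\textbf{Bijectivity.} Finally we check that the two constructions are mutually inverse. A splitting is determined by $s_{n+1}$, and by the face identities together with the Segal (nerve) property this is determined by $s_1$. So the two passages are inverse bijections.
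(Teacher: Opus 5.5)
Your proposal is correct and follows the route the paper indicates (the paper itself defers the proof to Garner--Kock--Weber): (1)$\Leftrightarrow$(3) via $\DDD\kat{-Coalg}\simeq\PrSh(\tKLEISLI)$, (3)$\Leftrightarrow$(4) via Proposition~\ref{splitsimp} with $\X_{-1}=\pi_0$, and (2)$\Leftrightarrow$(3) via top degeneracies that append the maps $\tau_R$. Your explicit checks supply details the paper leaves to the reference: recovering the local terminals from $s_1$ using $d_0s_1d_1=d_0s_1d_0$, $d_1s_2=s_1d_1$ and $s_2s_1=s_1s_1$, and using the Segal property to show a splitting is determined by $s_1$.
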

	
	The extension is given as follows: let $\catO_{-1}:=\pi_0(\catO)$ be the
	set of connected components. 
	Then the chosen-local-terminals structure defines a split augmented simplicial set
	$\ltnerve(\catO)$
  \begin{equation}\label{augmentedsplit}
    \xymatrix@C=6em{\catO_{-1} \ar@/^1em/[r]|-{s_0}     
      & \catO_0 \ar[l]|-{d_0}     
      \ar[r]|-{s_0}
      \ar@/^2em/[r]|-{s_1}
      & \catO_1   \ar@/^1em/[l]|-{d_0}  \ar@/_1em/[l]|-{d_1}  
      \ar@/_1em/[r]|-{s_0} \ar@/^1em/[r]|-{s_1} \ar@/^3em/[r]|-{s_2}
      &\catO_2   \ar[l]|-{d_1}\ar@/^2em/[l]|-{d_0}\ar@/_2em/[l]|-{d_2}
      &\hskip -10em\cdots 
    }
  \end{equation}
  The degeneracy $s_0:\catO_{-1}\to \catO_0$ returns the chosen local
  terminal of a given component, and the other top degeneracy operators
  $s_r:\catO_{r-1}\to \catO_r$ append to a chain the unique
  morphism from it to the
  chosen terminal object of its component.

  Let $\Catlt$ denote the category small categories with chosen local terminals
  (and functors that preserve chosen local terminals). We have the following
  functors
\[
\begin{tikzcd}
  \OpCat \ar[d, "\operatorname{forget}"'] & \\
\Catlt 
\ar[d, "\operatorname{forget}"'] \ar[r, "\nerexs"] & \PrSh(\tEM) \ar[d, "(\iii\kkk)\upperstar"]  \\
\Cat \ar[r, "\strictnerve"'] & \PrSh(\simplexcategory) .
\end{tikzcd}
\]

\section{The operadic nerve of an operadic category}
\label{sectionoperadicnerve}

The aim of this section is to construct a functor, called the {\em operadic nerve
functor}
$$
\nop:\OpCat \to { [\simplexcategoryop,\SMGp]_{\operatorname{ps}}} .
$$
Here ${ [\simplexcategoryop,\SMGp]_{\operatorname{ps}}}$ is the category of 
pseudo-simplicial symmetric strict
monoidal groupoids and their strict morphisms (so all morphisms are symmetric
strict monoidal, strictly preserving coherence $2$-cells).

Recall that the symmetric-monoidal-groupoid monad $\SSS:
\Grpd\to\Grpd$ is given by sending a groupoid $\CC$ to the groupoid $\SSS\CC$
whose objects are the
words in $\CC$, and whose morphisms from $(x_i)_{i \in \un n}$ to
$(y_i)_{i \in \un n}$ are given by {\em decorated permutations},
meaning pairs $(\rho,(f_i)_{i \in \un n})$ where $\rho \in
\mathfrak{S}_n$ is a permutation of $\un{n}$ and $f_i : x_i \to
y_{\rho i}$ is an arrow in $\CC$ for each $i \in \un{n}$. More
formally, the data $(\rho,f)$ is precisely a $2$-cell
\[
\begin{tikzcd}[column sep={3.2em,between origins},row sep={3.6em,between origins}]
	\un n \ar[rd, "x"'] \ar[rr, "\sim"', "\rho"] & \ar[d, phantom, 
	"\begin{rotate}{24}{$\Rightarrow$}\end{rotate}\;\;_{f}" description] & \un n \ar[ld, "y"]  \\
	& \CC &
\end{tikzcd}
\]

We extend $\SSS$ to a monad 
on simplicial groupoids as follows. If
$\X:\simplexcategoryop\to\Grpd$ is a simplicial groupoid, then $\SSS \X$ is
just the composite functor $\simplexcategoryop\xto{\X}\Grpd\xto{\SSS} \Grpd$. Since
$\SSS$ preserves both strict pullbacks and homotopy pullbacks, it 
preserves the conditions of being a category object or being a decomposition 
space.
Henceforth we use the symbol $\SSS$ in this levelwise
   sense. We just warn that for $\strictnerve\CC$ the
   nerve of a category, $\SSS\strictnerve\CC$ in this
   levelwise sense is different from the (nerve of the)
   free symmetric monoidal category on $\CC$.

\bigskip

Let now $\catO$ be an operadic category. Since we have chosen local
terminals in $\catO$, we can apply the local-terminals nerve 
$$
\nerexs : \OpCat \to [(\tEM)\op,\Set]
$$
to get the diagram \eqref{augmentedsplit}.
Applying $\SSS$ levelwise to
\eqref{augmentedsplit} we obtain a split augmented simplicial symmetric strict
monoidal groupoid
\[
\xymatrix@C=5.8em{\SSS\catO_{-1}\ar@/^1em/[r]|-{\SSS s_0}     
	& \SSS\catO_0 \ar[l]|-{\SSS d_0}     
	\ar[r]|-{\SSS s_0}
	\ar@/^2em/[r]|-{\SSS s_1}
	& \SSS\catO_1   \ar@/^1em/[l]|-{\SSS d_0}  \ar@/_1em/[l]|-{\SSS d_1}  
	\ar@/_1em/[r]|-{\SSS s_0} \ar@/^1em/[r]|-{\SSS s_1} \ar@/^3em/[r]|-{\SSS s_2}
	&\SSS\catO_2   \ar[l]|-{\SSS d_1}\ar@/^2em/[l]|-{\SSS d_0}\ar@/_2em/[l]|-{\SSS d_2}
	&\hskip -10em\cdots
}
\]

\begin{definition} For $r>0$ the groupoid of $r$-simplices $\nop(\catO)_r$ is the groupoid
  $\SSS\catO_{r-1}$ i.e.~the free symmetric monoidal groupoid generated by the
  set of $(r{-}1)$-chains of composable morphisms in $\catO$. In 
  degree zero, we
  have canonical identifications of groupoids $\nop(\catO)_0 = \SSS \catO_{-1}= \SSS
  \pi_0(\catO) = \SSS \catO_{\mathtt{u}}$ (here $\catO_{\mathtt{u}}$ denotes the
  set of chosen local terminal objects of $\catO$).
  
  { We already have an augmented split \emph{strict} simplicial structure
      on $\nop(\catO)$ induced by $\SSS\nerexs(\catO)$.}
  We thus define face and degeneracy operators in $\nop(\catO)$ as
  $$
  \dd_i := \SSS d_i:{\nop(\catO)}_r\to \nop(\catO)_{r-1}, \  \text{for} \ 0\le i\le r-1 ,
  $$ 
  $$
  \ss_i := \SSS s_i:\nop(\catO)_{r-1}\to \nop(\catO)_{r}, \  \text{for} \ 0\le i\le r-1 .
  $$  
\end{definition} 

We will use the fibre functor of the operadic category $\catO$ to define the top
face operators
$$
\dd_{r}:\nop(\catO)_{r}\to \nop(\catO)_{r-1}\,,
$$
so the operadic nerve of $\catO$ will be a symmetric monoidal
pseudo-simplicial groupoid of the form:

\[
\xymatrix@C=5.8em{\SSS\catO_{-1} \ar[r]|-{\SSS s_0}     
	& \SSS\catO_0
	\ar[r]|-{\SSS s_0} \ar@/^1em/[l]|-{\SSS d_0}\ar@/_1em/[l]|-{\dd_1}
	\ar@/^2em/[r]|-{\SSS s_1}
	& \SSS\catO_1   \ar@/^1em/[l]|-{\SSS d_0}  \ar@/_1em/[l]|-{\SSS d_1}
	\ar@/_3em/[l]|-{\dd_2}  
	\ar@/_1em/[r]|-{\SSS s_0} \ar@/^1em/[r]|-{\SSS s_1} \ar@/^3em/[r]|-{\SSS s_2}
	&\SSS\catO_2   \ar[l]|-{\SSS d_1}\ar@/^2em/[l]|-{\SSS d_0}\ar@/_2em/[l]|-{\SSS d_2}
	\ar@/_4em/[l]|-{\dd_3}  
	&\hskip -10em\cdots
}
\]

By Jardine's {\em supercoherence theorem}~\cite{JARDINE1991103}, a pseudo-simplicial 
groupoid can be specified by generators and relations: the generators are the usual 
face and degeneracy operators in categorical dimension $1$, together with invertible 
$2$-cells instead of the usual simplicial identities. The relations are given by 17 
families of equations satisfied by these $2$-cells \cite[(1.4.1)--(1.4.17)]{JARDINE1991103}.
In our case most of the simplicial identities hold strictly. In fact there is only one
family of non-trivial $2$-cells $\beta$, and there are only three families of equations 
to check. These can be interpreted in terms of pseudo-algebras for a certain monad 
$\widetilde\DDD$ as we explain in Section~\ref{sec:D-bialg}.

We begin from $\dd_1:\nop(\catO)_{1}\to \nop(\catO)_{0}$. By definition,
$\nop(\catO)_1 = \SSS\catO_0$. Since $\dd_1$ is required to be a symmetric
strict monoidal functor, it is enough to define it on a monoidal 
generator $T\in \SSS\catO_0$, that is on an object of $\catO$. We define
it to be the list of connected components of fibres of the
identity $\id:T\to T$.

For $r>1$, let $p= (T_r\stackrel{f_{r-1}}{\to}
T_{r-1}\stackrel{f_{r-2}}{\to} \ldots \stackrel{f_1}{\to} T_1)$ be a
generator of $\nop(\catO)_r$, that is, an object of $\catO_{r-1}$. We
then have a commutative diagram in $\catO$:
\begin{equation*} 
	p: \qquad
	\xymatrix@C = +3em@R = +2em{
		T_r     \ar[r]^{f_{r-1}} \ar@/^-3.3ex/[drr]_{h_{r-1}} &  T_{r-1}
		\ar[r]^{f_{r-2}}  \ar@/^-1.5ex/[dr]_{h_{r-2}} & \ldots  \ar[r]^{f_2} \ar[d] &   T_2
		\ar@/^+1.3ex/[dl]^{h_1:=f_1}
		\\
		&&T_1 &
	}
\end{equation*} 
By functoriality of the fibre functor \ref{data:operadic-Q3}, this
diagram induces for each $i\in |T_1| =: k$ a chain of morphisms between
fibres:
$$
p_i = (h_{r-1}^{-1}(i)\xto{(f_{r-1})_i} \ldots \xto{(f_{2})_i} h_{1}^{-1}(i)).
$$
We define $\dd_r(p)$ to be this list of $(r{-}2)$-chains, which is thus 
an object of $\nop(\catO)_{r-1} = \SSS\catO_{r-2}$. In short:
$$
\dd_r(p) = (p_1,p_2,\ldots,p_k) .
$$

\begin{blanko}{$2$-cells of the operadic nerve.}\label{beta=pi}
  To complete the definition of $\nop(\catO)$ as a {\em pre-simplicial}
  groupoid (that is, containing all the data of $d$, $s$ and $\beta$, but not yet the 
  equations on $\beta$), we need to define the $2$-cells $\beta$.

  We put all $2$-cells not involving a top face operator equal
  to the identities. The general $2$-cells for the top face operators and
  degeneracies are the following, for $0\leq j \leq r-1$:
  \[
  \begin{tikzcd}
  \X_r \ar[rd, bend right, "\id"'] \ar[r, "s_r"] & \X_{r+1} \ar[d, "d_{r+1}"]  \\
  \ar[ru, pos=0.7, phantom, "\gamma_r" description] & \X_r
  \end{tikzcd}
  \qquad
  \begin{tikzcd}
  \X_r \ar[d, "d_r"'] \ar[r, "s_j"] & \X_{r+1} \ar[d, "d_{r+1}"]  \\
  \X_{r-1}\ar[ru, phantom, "\gamma_j" description] \ar[r, "s_j"']& \X_r
  \end{tikzcd}
  \]
  For $\X=\nop(\catO)$ we define
  each $\gamma_r$ to be the identity. A simple calculation using
  Axiom~\ref{axQ:BM-67} shows that indeed $\dd_{r+1}\ss_r(p) = \id$, so
  that the definition makes sense. Similarly, for $j\le r-1$ we can put
  $\gamma_j$ equal to the identity, due to
  Axiom~\ref{axQ:BM-fibres-of-identities} and Axiom~\ref{axQ:BM-67}.

  We
  also have the other series of $2$-cells involving the top face and
  other face operators, for $0\leq j \leq r$:
  \begin{equation}\label{alpha}
  \begin{tikzcd}
  \X_{r+2} \ar[d, "d_{j}"'] \ar[r, "d_{r+2}"] & \X_{r+1} \ar[d, "d_j"]  \\
  \X_{r+1} \ar[r, "d_{r+1}"'] \ar[ru, Rightarrow, shorten <=14pt, shorten 
  >=14pt, "\alpha_{r,j}"]& \X_r \,.
  \end{tikzcd}
  \qquad 
  \begin{tikzcd}
  \X_{r+2} \ar[d, "d_{r+1}"'] \ar[r, "d_{r+2}"] & \X_{r+1} \ar[d, "d_{r+1}"]  \\
  \X_{r+1} \ar[r, "d_{r+1}"'] \ar[ru, Rightarrow, shorten <=14pt, shorten 
  >=14pt, "\beta_r"]& \X_r \,.
  \end{tikzcd}
  \end{equation}

  For $j=0$ and $p= (T_{r+2}\stackrel{f_{r+1}}{\to}
  T_{r}\stackrel{f_{r}}{\to} \ldots \stackrel{f_1}{\to} T_1)\in
  \nop(\catO)_{r+2}$, the operator $\dd_{r+1}\dd_{0}$ on $p$ returns the list of chains
  of morphisms
  \begin{equation}\label{listp}
      p_i = (h_{r+1}^{-1}(i)\xto{(f_{r-1})_i} \ldots \xto{(f_{2})_i} h_{1}^{-1}(i)), \  
      i\in |T_1| .
  \end{equation}
  On the other hand, $\dd_{r+2}(p)$ is equal to the list $q_i =
  (h_{r+1}^{-1}(i)\xto{(f_{r+1})_i} \ldots \xto{(f_{2})_i} h_{1}^{-1}(i)), \ i\in
  |T_1|$, and after application of $\dd_0$ we get exactly the list
  \eqref{listp}. Hence we can define $\alpha_0$ to be the identity. Similar
  calculations and Axiom~\ref{axQ:BM-67} allow us to define
  $\alpha_j$ to be the identity for all $0< j\leq r$.

  Now we turn to the definition of the last $2$-cell, $\beta$.  In general, we need a 
  monoidal natural transformation between strict monoidal functors (for $r\geq 0$)
  \[
  \begin{tikzcd}
  \X_{r+2} \ar[d, "d_{r+1}"'] \ar[r, "d_{r+2}"] & \X_{r+1} \ar[d, "d_{r+1}"]  \\
  \X_{r+1} \ar[r, "d_{r+1}"'] \ar[ru, Rightarrow, shorten <=14pt, shorten 
  >=14pt, "\beta_r"]& \X_r \,.
  \end{tikzcd}
  \]
  Since in our case all the objects are free symmetric strict monoidal groupoids,
  it is enough to define the component of $\beta$ on a $1$-element list. 

  Let us start with $r=1$, so as to be concerned with input from 
  $\nop(\catO)_3$.
  Let $p$ be 
  \begin{equation*} 
      \xymatrix@C = +1em@R = +1em{
          T      \ar[rr]^f \ar[dr]_h & & S \ar[dl]^g
          \\
          &R&
      }
  \end{equation*} 
  a monoidal generator in $\nop(\catO)_3$, so an element in $\catO_2$. We then have
  $\dd_2(p) = (T\stackrel{f}{\to} S)$ and $\dd_2\dd_2(p)$ is the list of
  fibres $f^{-1}(1),\ldots,f^{-1}(n)$, where $n= |S|$.
  On the other hand, $\dd_3(p)$ is the list of the fibre morphisms
  $f_1,\ldots,f_k$, where $k = |R|$, and $\dd_2\dd_3(p)$ is then the 
  flattened list of
  objects $(f_i)^{-1}(j), \ 1\le i \le k, j \in |g|^{-1}(i)$, where the order
  is lexicographic. According to 
  Axiom~\ref{axQ:BM-fibres-of-local-fibres}, these fibres match up under a 
  specific bijection also provided in the axiom:
  to each element in the list $(f^{-1}(1),\ldots,f^{-1}(n))$ 
  there corresponds an element in the   flattened list
  $((f_i)^{-1}(j)), \ 1\le i \le k, j \in |g|^{-1}(i)$. The individual 
  elements in the two lists are the same, thanks to the identification of
  the fibre $f^{-1}(j)$ with the fibre $f^{-1}_{|g|(j)}(j)$ 
  (Axiom~\ref{axQ:BM-fibres-of-local-fibres}). The bijection 
  takes element $j$ in the first list to 
  $\epsilon(j)$ in the $i$-th sublist of the nested list (with reference 
  to \eqref{eq:10}). More precisely, 
  the $p$-component of $\beta_1 : \dd_2 \dd_2 \Rightarrow 
  \dd_2\dd_3$ is precisely the $\pi$-part of the
  pita factorisation of $\norm g$.

  For $r=0$, the transformation
  \[
  \begin{tikzcd}
  \X_{2} \ar[d, "d_{1}"'] \ar[r, "d_{2}"] & \X_{1} \ar[d, "d_{1}"]  \\
  \X_{1} \ar[r, "d_{1}"'] \ar[ru, Rightarrow, shorten <=14pt, shorten 
  >=14pt, "\beta_0"]& \X_0 \,.
  \end{tikzcd}
  \]
  is constructed by applying ${\beta_1}$ to the monoidal generator $f =
  \id_T$. We then have the necessary permutation, this time provided by
  Axiom~\ref{axQ:BM-fibres-of-tau-maps}.

  In general, for $r \geq 2$ and a monoidal generator $p=
  (T_{r+2}\stackrel{f_{r+1}}{\to} T_{r+1}\stackrel{f_{r}}{\to} \ldots
  \stackrel{f_1}{\to} T_1)\in \nop(\catO)_{r+2}$, the operator 
  $\dd_{r+1}\dd_{r+1}$ on
  $p$ returns the list of chains of morphisms
  $$
  p_j = (e_{r}^{-1}(j)\xto{(f_{r+1})_j} \ldots \xto{(f_{3})_j} 
  e_{1}^{-1}(j)), \ j\in \norm{T_2} ,
  $$
  where
  \begin{equation*} 
      \xymatrix@C = +3em@R = +2em{
          T_{r+2}     \ar[r]^{f_{r+1}} \ar@/^-3.3ex/[drr]_{e_{r}} &  T_{r+1} 
          \ar[r]^{f_{r}}  \ar@/^-1.5ex/[dr]_{e_{r-1}} & \ldots  \ar[r]^{f_3} \ar[d] 
          &   T_3    \ar@/^+1.3ex/[dl]^{e_1:=f_2}
          \\
          &&T_2 &
      }
  \end{equation*} 
  The operator $\dd_{r+1}\dd_{r+2}$ on $p$ returns the list 
  $$
  (p_i)_j = (e_{r}^{-1}(j)\xto{((f_{r+1})_i)_j} \ldots 
  \xto{((f_{3})_i)_j} e_{1}^{-1}(j)), \ j\in |T_2|, \ i= |f_1|(j)\in |T_1|.
  $$
  Again by Axiom~\ref{axQ:BM-fibres-of-local-fibres} and functoriality there
  is a canonical isomorphism 
  $$
  (\beta_{r})_p : \dd_{r+1}\dd_{r+1}(p) \to
  \dd_{r+1}\dd_{r+2}(p)$$ 
  in $\nop(\catO)_{r}$, and it is given in bottom degree as the $\pi$-part of the pita
  factorisation of $\norm{f_1}$: once the entries in the direct $T_2$-indexed list and the
  flattened family of $(T_2)_j$-indexed lists have been matched up, the individual
  elements are identical by Axiom~\ref{axQ:BM-fibres-of-local-fibres}.
\end{blanko}

So far the construction gives a pre-simplicial structure on $\nop(\catO)$
for which the specified $2$-cells are invertible.
In each case, their components are certain permutations constructed 
using Axiom~\ref{axQ:BM-fibres-of-tau-maps} and
Axiom~\ref{axQ:BM-fibres-of-local-fibres}, and in each case this 
permutation arises from pita factorisation. 

We note that the operadic-nerve construction is functorial:
\begin{lemma}\label{operfun}
	Any operadic functor $p:\catP\to \catO$ induces a morphism of
	pre-simplicial symmetric monoidal groupoids
	$$
	\nop(p):\nop(\catP)\to \nop(\catO).
	$$
	Moreover, levelwise each $\nop(p)_r$ (for $r\ge 0$) is free, and in particular 
	is a discrete fibration of groupoids.
\end{lemma}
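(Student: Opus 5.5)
The plan is to define $\nop(p)$ levelwise as $\SSS$ applied to the induced map of local-terminals nerves, and then check compatibility with all the structure. Since $p$ is a strict operadic functor, it preserves chosen local terminals, so it lies in $\Catlt$. Hence $\ltnerve(p):\ltnerve(\catP)\to\ltnerve(\catO)$ is a map of top-split augmented simplicial sets. It sends an $(r{-}1)$-chain $(T_r\to\cdots\to T_1)$ to $(pT_r\to\cdots\to pT_1)$ and $\pi_0(\catP)\to\pi_0(\catO)$. We put $\nop(p)_r:=\SSS(\ltnerve(p)_{r-1})$ for $r\geq 0$. By functoriality of $\SSS$ and of $\ltnerve$, these maps are symmetric strict monoidal functors. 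They commute strictly with all $\dd_i=\SSS d_i$ for $i<r$ and with all $\ss_i=\SSS s_i$.

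Next I would check compatibility with the top face operators $\dd_r$. Since both sides are strict monoidal functors out of a free symmetric monoidal groupoid, it suffices to compare them on a monoidal generator $q=(T_r\xto{f_{r-1}}\cdots\xto{f_1}T_1)$ in $\catP$. There are two facts to use.
\begin{itemize}
\item $\norm{pT_1}=\norm{T_1}$, so both lists are indexed by the same set.
\item $p(h^{-1}(i))=(ph)^{-1}(i)$ and $p((f)_i)=(pf)_i$, so the $i$-th chains agree entry by entry.
\end{itemize}
The case $r=1$ is the same argument applied to $\id_T$, together with $p$ preserving connected components and chosen terminals. Hence $\nop(p)_{r-1}\dd_r=\dd_r\nop(p)_r$ strictly.

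For the $2$-cells, the $\gamma$'s and $\alpha$'s are identities on both sides, so only $\beta$ requires attention. By \ref{beta=pi}, the component of $\beta_r$ at a generator $q$ is the $\pi$-part of the pita factorisation of $\norm{f_1}$, with identity decorations. Since $\norm{pf_1}=\norm{f_1}$, the permutations coincide. Applying $\nop(p)_r$ to $(\beta_r)_q$ therefore gives exactly $(\beta_r)_{\nop(p)(q)}$. As $\beta$ is monoidal, this extends to all objects.

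Finally, freeness: each $\nop(p)_r$ is $\SSS$ of a map of sets, hence free. A map $\SSS A\to\SSS B$ induced by $A\to B$ is a discrete fibration of groupoids. Indeed, a morphism in $\SSS B$ out of $(\phi a_i)$ is just a permutation $\rho$, since $B$ is discrete. This lifts uniquely to the permutation $\rho$ out of $(a_i)$, with target $(a_{\rho^{-1}i})$. The only step requiring real care is the $\beta$-compatibility. Its content is entirely that cardinality is preserved strictly, so the same pita permutation appears on both sides.
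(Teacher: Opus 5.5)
Your proposal is correct and follows the paper's argument. You define $\nop(p)$ levelwise as $\SSS$ of the induced map of local-terminals nerves, obtain compatibility with the top face operators and the $\beta$-cells from strict preservation of fibres, fibre maps and cardinalities, and get the discrete-fibration property from $\SSS$ being applied to a map of sets. You are more explicit than the paper's brief proof in noting that strict preservation of cardinality is exactly what makes the $\beta$-permutations agree.
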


\begin{proof}
  The first statement follows because all the structure related to the
  top face and degeneracy operators and the beta cells were defined using
  chosen local terminal objects and fibres, and operadic functors are
  required to preserve such structure strictly. The components are
  discrete fibrations because they are obtained as $\SSS$ applied to a
  map of sets, and $\SSS$ preserves discrete fibrations.
\end{proof}

\section{Coherence of the operadic nerve}

The key feature of the operadic-nerve construction (and in the end an important
consequence of the axioms for operadic categories) is that the resulting structure is
coherent. That is, the permutation obtained from
Axiom~\ref{axQ:BM-fibres-of-local-fibres} is equal to a certain composite of
permutations and their inverses obtained from Axiom~\ref{axQ:BM-fibres-of-tau-maps}.
The coherence is the content of Theorem~\ref{NOP} below. Before coming to this
theorem, we need to spell out the specific kind of pseudo-simplicial groupoids we are
concerned with, and make explicit which are precisely its coherence equations. We
then illustrate the workings with the case of $\finset$.

\bigskip

Jardine~\cite{JARDINE1991103} lists all the 17 families of coherence equations. In our
case, only 3 out of 17 will be nontrivial; the others are strict. The main series of
equations is Jardine's Equation~(1.4.1). For all $i \leq j \leq k \leq r+1$ we have

\begin{equation}\label{beta-eq}
\begin{tikzcd}[column sep={44pt,between origins},row sep={35pt,between origins}]
\X_{r+3} \ar[dd, "d_i"'] \ar[rd, pos=0.7, "d_{j+1}"'] 
\ar[rr, "d_{k+2}"] && \X_{r+2}  \ar[rd, "d_{j+1}"] &  
\\
& \X_{r+2} \ar[dd, "d_{i}"] \ar[rr, "d_{k+1}"'] 
\ar[ru, Rightarrow, shorten <=8pt, shorten >=8pt]
&& \X_{r+1} \ar[dd, "d_{i}"] 
 \\
 \X_{r+2} \ar[rd, "d_j"'] \ar[ru, Rightarrow, shorten <=8pt, shorten >=8pt] & & &
 \\
 & \X_{r+1} \ar[rr, "d_k"'] 
 \ar[rruu, Rightarrow, shorten <=36pt, shorten >=36pt]&& \X_r
\end{tikzcd}
\ \ \
= 
\ \ \
\begin{tikzcd}[column sep={44pt,between origins},row sep={35pt,between origins}]
\X_{r+3} \ar[dd, "d_{i}"']  
\ar[rr, "d_{k+2}"] && \X_{r+2}  \ar[dd, "d_{i}"']\ar[rd, "d_{j+1}"] & 
\\
 & && \X_{r+1} \ar[dd, "d_{i}"]
 \\
 \X_{r+2} \ar[rd, "d_{j}"'] \ar[rr, "d_{k+1}"] 
 \ar[rruu, Rightarrow, shorten <=36pt, shorten >=36pt] & & \X_{r+1} \ar[rd, 
 "d_{j}"'] 
 \ar[ru, Rightarrow, shorten <=8pt, shorten >=8pt] &
 \\
 & \X_{r+1} \ar[rr, "d_{k}"'] 
 \ar[ru, Rightarrow, shorten <=8pt, shorten >=8pt]&& \X_r
\end{tikzcd}
\end{equation}
In our case, most of these are trivial. Only cases with high values of the indices are 
interesting, because only with two consecutive top face operators do we get a 
nontrivial cell, the $\beta$. With all indices equal $r+1$ we get the principal
equation, which in the case of $r=1$ reads
\begin{equation}\label{betacoherence1}
    \begin{tikzcd}[row sep={9ex,between origins}, column sep={4.6em,between origins}]
    d_{1} d_{1}d_{1}\ar[r, equal] \ar[d, "\beta_{0}d_{1}"'] 
    & d_{1}d_{1}d_{2} \ar[rr, "\beta_{0}d_{2}"]
    && d_{1} d_{2} d_{2} \ar[d, "d_{1}\beta_{1}"]
    \\
    d_{1}d_{2} d_{1} \ar[r, equal]
    & d_{1}d_{1}d_{3}  \ar[rr, "\beta_{0}d_{3}"']
    && d_{1}d_{2}d_{3} \,.
    \end{tikzcd}
\end{equation}
The equation one index down gives
\begin{equation}\label{betacoherence-new}
    \begin{tikzcd}[row sep={9ex,between origins}, column sep={6em,between origins}]
    d_{1} d_{1}d_{0}\ar[r, "\id"] \ar[d, "\beta_{0}d_{0}"'] 
    & d_{1}d_{0}d_{2} \ar[r, "\id"]
    & d_{0} d_{2} d_{2} \ar[d, "d_{0}\beta_{1}"]
    \\
    d_{1}d_{2} d_{0} \ar[r, "\id"']
    & d_{1}d_{0}d_{3}  \ar[r, "\id"']
    & d_{0}d_{2}d_{3} \,.
    \end{tikzcd}
\end{equation}
This is interesting too because it shows that $d_0(\beta_1)$ is completely determined
by $\beta_0$. Furthermore, $d_0$ in our case will be faithful, as it is $\SSS$ of a
map of sets, so already $\beta_1$ is determined by $\beta_0$. In fact, one can derive
from this equation that full coherence in all simplicial degrees is a consequence of
coherence in degree $3$ (the one we are mostly concerned with below). For the precise
statement, see Lemma~8.3 in the pita paper~\cite{Batanin-Kock-Weber:2512.22794}.

The second series (1.4.4 in the list of Jardine~\cite{JARDINE1991103}) of relevance concerns interplay with degeneracy 
operators. This time we list only the ones containing $\beta$-cells:
\begin{equation}\label{beta-eq-bis}
\begin{tikzcd}[column sep={44pt,between origins},row sep={35pt,between origins}]
X_{r+2}  \ar[rd, pos=0.7, "d_{r+1}"'] 
\ar[rr, "d_{r+2}"] && X_{r+1}  \ar[rd, "d_{r+1}"] &  
\\
& X_{r+1} \ar[rr, "d_{r+1}"'] 
\ar[ru, Rightarrow, shorten <=8pt, shorten >=8pt, "\beta_{r}"]
&& X_{r} 
 \\
 X_{r+1} \ar[uu, "s_{r+1}"]  \ar[rd, "\id"'] \ar[ru, phantom, "\commutes" description] & & &
 \\
 & X_{r+1} \ar[rr, "d_{r+1}"'] \ar[uu, "\id"']
 \ar[rruu, phantom, "\commutes" description]&& X_{r}  \ar[uu, "\id"']
\end{tikzcd}
\ \ \
= 
\ \ \
\begin{tikzcd}[column sep={44pt,between origins},row sep={35pt,between origins}]
X_{r+2}  
\ar[rr, "d_{r+2}"] && X_{r+1} \ar[rd, "d_{r+1}"] & 
\\
 & && X_{r} 
 \\
 X_{r+1} \ar[uu, "s_{r+1}"] \ar[rd, "\id"'] \ar[rr, "\id"] 
 \ar[rruu, phantom, "\commutes" description] & & X_{r+1} \ar[rd, 
 "d_{r+1}"'] \ar[uu, "\id"]
 \ar[ru, phantom, "\commutes" description] &
 \\
 & X_{r+1} \ar[rr, "d_{r+1}"'] 
 \ar[ru, phantom, "\commutes" description]&& X_r
 \ar[uu, "\id"']
\end{tikzcd}
\end{equation}
which says that $\beta$-cells are trivial on top-degenerate simplices.

The third series (1.4.2 in the list of Jardine~\cite{JARDINE1991103})
has lower indices, $i\leq r$, which is less interesting:
\begin{equation}\label{coherence-new-bis}
\begin{tikzcd}[column sep={44pt,between origins},row sep={35pt,between origins}]
X_{r+3}  \ar[rd, pos=0.7, "d_{r+2}"'] 
\ar[rr, "d_{r+3}"] && X_{r+2}  \ar[rd, "d_{r+2}"] &  
\\
& X_{r+2} \ar[rr, "d_{r+2}"'] 
\ar[ru, Rightarrow, shorten <=8pt, shorten >=8pt, "\beta_{r+1}"]
&& X_{r+1} 
 \\
 X_{r+2} \ar[uu, "s_{i}"]  \ar[rd, "d_{r+1}"'] \ar[ru, phantom, "\commutes" description] & & &
 \\
 & X_{r+1} \ar[rr, "d_{r+1}"'] \ar[uu, "s_{i}"']
 \ar[rruu, phantom, "\commutes" description]&& X_{r}  \ar[uu, "s_{i}"']
\end{tikzcd}
\ \ \
= 
\ \ \
\begin{tikzcd}[column sep={44pt,between origins},row sep={35pt,between origins}]
X_{r+3}  
\ar[rr, "d_{r+3}"] && X_{r+2} \ar[rd, "d_{r+2}"] & 
\\
 & && X_{r+1} 
 \\
 X_{r+2} \ar[uu, "s_{i}"] \ar[rd, "d_{r+1}"'] \ar[rr, "d_{r+2}"] 
 \ar[rruu, phantom, "\commutes" description] & & X_{r+1} \ar[rd, 
 "d_{r+1}"'] \ar[uu, "s_{i}"]
 \ar[ru, phantom, "\commutes" description] &
 \\
 & X_{r+1} \ar[rr, "d_{r+1}"'] 
 \ar[ru, Rightarrow, shorten <=8pt, shorten >=8pt, "\beta_n"]&& X_r
 \ar[uu, "s_{i}"']
\end{tikzcd}
\end{equation}
We list it for completeness, but
in our case, this equation can in fact be derived as a consequence of 
\eqref{betacoherence-new} (see \cite[Lemma~8.3]{Batanin-Kock-Weber:2512.22794}).

\begin{blanko}{Special case of $\finset$.}
  We analyse the special case of $\finset$, to illustrate the workings, and because
  this case will play a special role. Given a commutative triangle in $\finset$
  \begin{equation}\label{triangle}
  \begin{tikzcd}[column sep={30pt,between origins}]
  m \ar[rr, "f"] \ar[rd, "h"'] & \ar[d, phantom, pos=0.4, "\sigma" description] & n 
  \ar[ld, "g"]  \\
   & k &
  \end{tikzcd}
  \end{equation}
  then we have
  $$
  \dd_0(\sigma) = g  \qquad
  \dd_1(\sigma) = h \qquad 
  \dd_2(\sigma)= f \qquad
  \dd_3(\sigma) =: (f_1,\ldots,f_k) .
  $$
  Here the $k$-tuple $(f_1,\ldots,f_k)$ is the list of fibre maps of $f$ with 
  respect to $g$.
  We also put $(m_1,\ldots,m_k) := \dd_2 (h) = \dd_2\dd_1(\sigma)$ and
  $(n_1,\ldots,n_k) := \dd_2 (g) = \dd_2\dd_0(\sigma)$, and then it is an easy check
  with simplicial identities to see that 
  $\dd_1(f_i)= m_i$ and $\dd_0(f_i) = n_i$, so as to have
  \begin{equation}\label{fi}
  f_i : m_i \to n_i .
  \end{equation}

  We now spell out what the first few $\beta$-cells are. First
  $$
  (\beta_0)_f : \dd_1 \dd_1(f) \to \dd_1 \dd_2(f)
  $$
  On the domain we have $\dd_1 \dd_1(f)= \dd_1(m)$. This is the list of terminals of
  length $\norm{m}$. On the codomain we have $\dd_1 \dd_2(f) = \dd_1(f_1,\ldots,f_k)$.
  This is the flattened list of: the list of terminals of length $m_1$, followed by 
  the list of terminals of length $m_2$, and so on, ending with the list of 
  terminals of length $m_n$. Note that $m=\sum_{j=1}^n m_j$. Warning: the $m_j := f^{-1}(j)$ 
  (for $j\in n$) here are not
  the same as the $m_i:= h^{-1}(i)$ (for $i\in k$) from Equation~\eqref{fi}.
  
  The arrow $(\beta_0)_f$ is the bijection
  $$
  (\beta_0)_f : m \isopil \sum_{j=1}^n m_j
  $$
  arising as the $\pi$-factor in the pita factorisation of $f$.
  (Note that it is an arrow in the groupoid $\SSS\finset_{-1}$, not officially 
  an arrow in $\finset$. For $\finset$ this makes no difference, but in more
  general operadic categories there is a  difference.) 
  The $\eta$-part is also involved in the following
  calculations: it is ordinal sum of the unique maps $m_j \to 1$.
  
  Similarly,
  $
  (\beta_0)_g : n \isopil \sum_{i=1}^k n_i
  $
  and 
  $
  (\beta_0)_h : m \isopil \sum_{i=1}^k m_i .
  $
  (In these two cases we are back to talking about fibres indexed by $k$, so that $n_i := 
  g^{-1}(i)$ and $m_i := h^{-1}(i)$.)
  Note that the sum of the fibre maps,
  \begin{equation} \label{eta-as-fibre-maps}
  \sum_{i=1}^k f_i : \sum_{i=1}^k m_i \to \sum_{i=1}^k n_i
  \end{equation}
  is the connecting map $\eta(f/g)$ from Equation~\eqref{func-pita}, mediating between the 
  pita factorisation of $h$ and the pita factorisation of $g$.
   
  Let us now describe
  $$
  (\beta_1)_\sigma : \dd_2 \dd_2 (\sigma) \to \dd_2 \dd_3(\sigma).
  $$
  The domain is $\dd_2\dd_2(\sigma) = \dd_2(f)$, which is the list $(m_1,\ldots,m_n)$
  (where now $m_j = f^{-1}(j)$ for $j\in n$). The codomain is $\dd_2\dd_3(\sigma) = 
  \dd_2(f_1,\ldots,f_k)$, which is the flattened list
  $$
  \big( m_{11},\ldots,m_{1 n_1}, \ \ldots \ ,m_{k1},\ldots,m_{k n_k} \big)
  $$
  where
  $$
  m_{ij} = (f_i)^{-1}(j) \stackrel{\ref{axQ:BM-fibres-of-local-fibres}}{=} f^{-1}(j) 
  = m_j \subset m_i, \qquad \text{ for } j\in g^{-1}(i).
  $$
  The bijection $(\beta_1)_\sigma$ is
  $$
  (m_1,\ldots,m_n) \simeq \big( m_{11},\ldots,m_{1 n_1}, \ \ldots \ ,m_{k1},\ldots,m_{k n_k} \big)
  $$
  The entries in these two lists are the same (by 
  Axiom~\ref{axQ:BM-fibres-of-local-fibres}), but in the domain they are listed 
  according to the order of $n$, whereas in the codomain they are listed according to 
  their order down in $k$, and as a second criterion by their order within the fibre.
  The precise permutation is $(\beta_1)_\sigma = (\beta_0)_g = \pi_g$ (illustrating 
  \eqref{betacoherence-new}).
  
  In the coherence equation, it is rather $\dd_1$ of $(\beta_1)_\sigma$ that occurs.
  The effect of this $\dd_1$ is to split each of the list entries into a list of 
  terminals (and then flatten). So $\dd_1(\beta_1)_\sigma$ goes between two length-$m$ lists of 
  terminals, permuting the entries according to
  $$
  \sum_{j=1}^n m_j \isopil \sum_{i=1}^k \sum_{j=1}^{n_i} m_{ij} .
  $$
  Note that within each $m_{ij}$ identified with $m_j$, no permutation takes place,
  which is to say that the permutation is an $n$-shuffle: an element 
  $x\in m_j$ is sent to the same $x$ but now in the summand $m_{ij}$ (where $i=g(j)$).
  
  The map $\dd_1(\beta_1)_\sigma$ 
  can be described as the $\pi$-factor of the composite map 
  $$
  \sum_{j=1}^n m_j \to n \isopil \sum_{i=1}^k n_i .
  $$
  In precise terms,
  $$
  \dd_1(\beta_1)_\sigma = \pi( \pi_g \circ \sum f_i)= \pi( \pi_g \circ \eta_f) .
  $$
  
  Now let us look at the general coherence equation~\eqref{betacoherence1}
  \[
  \begin{tikzcd}[row sep={9ex,between origins}, column sep={4.6em,between origins}]
  \dd_{1} \dd_{1}\dd_{1}\ar[r, equal] \ar[d, "\beta_{0}\dd_{1}"'] 
  & \dd_{1}\dd_{1}\dd_{2} \ar[rr, "\beta_{0}\dd_{2}"]
  && \dd_{1} \dd_{2} \dd_{2} \ar[d, "\dd_{1}\beta_{1}"]
  \\
  \dd_{1}\dd_{2} \dd_{1} \ar[r, equal]
  & \dd_{1}\dd_{1}\dd_{3}  \ar[rr, "\beta_{0}\dd_{3}"']
  && \dd_{1}\dd_{2}\dd_{3} \,,
  \end{tikzcd}
  \]
  which in our case unpacks to
  \begin{equation}\label{coh-expanded}
  \begin{tikzcd}[row sep={12ex,between origins}, column sep={7.5em,between origins}]
  m \ar[d, "(\beta_{0})_h"'] 
   \ar[rr, "(\beta_{0})_f"]
  && {\displaystyle \sum_{j=1}^n f^{-1}(j)} \ar[d, "\dd_{1}((\beta_{1})_\sigma)"]
  \\
   {\displaystyle \sum_{i=1}^k h^{-1}(i)}  \ar[rr, "(\beta_{0})_{f_1,\ldots,f_k} = \sum_{i=1}^k (\beta_0)_{f_i}"']
  && {\displaystyle \sum_{i=1}^k \sum_{j=1}^{n_i} m_{ij}} \,.
  \end{tikzcd}
  \end{equation}

  It is far from obvious that this diagram commutes in general. Presumably one could
  check it by hand, writing out everything and keeping track carefully of all the
  maps. In a moment (Example~\ref{7-5-3} below) we shall give an example computation,
  both to illustrate how it works and to explain why we do not want to try the general
  case. Rather than trying to establish the general coherence by hand, further on in
  \ref{NOP} and \ref{F=N} we shall bypass such calculations by translating the
  problem into a setting (the pita nerve) where coherence holds for more general
  reasons.
  
  At the moment, taking for granted that the square commutes, let us first see how the
  pieces fit into the triangle \eqref{triangle} we are looking at:
  \begin{equation}\label{big-red-triangle-F}
  \begin{tikzcd}[column sep={10em,between origins},row sep={5.5em,between origins}]
  m \ar[d, "\pi_h"'] \ar[r, "\pi_f"] & {\displaystyle \sum_{j=1}^n m_j} \ar[d, 
  "\dd_1(\beta_1)_\sigma=\pi(\circ \pi_g\eta_f)"] \ar[r, "\eta_f"] & n
  \ar[d, "\pi_g"]  \\
  {\displaystyle \sum_{i=1}^k m_i} \ar[rd, bend right=20, "\eta_h"']\ar[r, "\pi_{(f_1,\ldots,f_k)}"] & 
  {\displaystyle \sum_{i=1}^k \sum_{j=1}^{n_i}} 
  m_{ij} \ar[r, "\eta(\pi_g \circ \eta_f)"] & {\displaystyle \sum_{i=1}^k n_i} \ar[ld, bend left=20, 
  "\eta_g"] 
  \\
  & k &
  \end{tikzcd}
  \end{equation}
  The upper right-hand square commutes since it is the pita factorisation of $\pi_g
  \circ \eta_f$, but the important point is that the middle vertical arrow is also
  $\dd_1(\beta_1)_\sigma$, as we already calculated earlier. With this interpretation, the upper
  left-hand square is the coherence equation~\eqref{betacoherence1}, as spelled out in 
  \eqref{coh-expanded}. The lower part of the diagram commutes by
  inspection: $\eta_h$ sends an element $x\in \sum_{i=1}^k m_i$ to the index of the
  summand it belongs to. That is also the description of the two other eta maps, on the
  right. Finally it just remains to observe that we have
  $$
  \pi_{(f_1,\ldots,f_k)} = \sum_{i=1}^k \pi_{f_i} ,
  $$
  it's a fibrewise map with respect to $i \in k$, so it does not change the property
  that $x$ is sent to $i$ if it belongs to the $i$-th fibre, which is commutativity of
  the lower part of the diagram.
\end{blanko}

\begin{blanko}{Example computation.}\label{7-5-3}
  Consider the triangle (in the operadic category $\finset$)
	\[
	\begin{tikzcd}[column sep={30pt,between origins}]
	7 \ar[rr, "f"] \ar[rd, "h"'] & \ar[d, phantom, pos=0.32, "\sigma" description] & 5 
	\ar[ld, "g"]  \\
	 & 3 &
	\end{tikzcd}
	\]
  with the maps given in picture form as
  \[
  \begin{tikzpicture}[scale=0.6]
    \footnotesize
   
    \begin{scope}[shift={(1.0,-0.6)}]

    \begin{scope}[shift={(0,2.6)}]
      \draw (0.0,0) node (A1) {$1$};
      \draw (1.0,0) node (A2) {$2$};
      \draw (2.0,0) node (A3) {$3$};
      \draw (3.0,0) node (A4) {$4$};
      \draw (4.0,0) node (A5) {$5$};
      \draw (5.0,0) node (A6) {$6$};
      \draw (6.0,0) node (A7) {$7$};
    \end{scope}

    \draw (-0.1,0.4) node {\small $h$};
    
    \begin{scope}[shift={(0,-1.2)}]
      \draw (2.0,0) node (C1) {$1$};
      \draw (3.0,0) node (C2) {$2$};
      \draw (4.0,0) node (C3) {$3$};
    \end{scope}

    \draw (A1) to[out=-90, in=90] (C2);
    \draw (A2) to[out=-90, in=90] (C3);
    \draw (A3) to[out=-90, in=90] (C1);
    \draw (A4) to[out=-90, in=90] (C3);
    \draw (A5) to[out=-90, in=90] (C1);
    \draw (A6) to[out=-90, in=90] (C3);
    \draw (A7) to[out=-90, in=90] (C2);

  \end{scope}

  \draw (8.9,0.1) node {$=$};

  \begin{scope}[shift={(12,0)}]

    \begin{scope}[shift={(0,2.6)}]
      \draw (0.0,0) node (A1) {$1$};
      \draw (1.0,0) node (A2) {$2$};
      \draw (2.0,0) node (A3) {$3$};
      \draw (3.0,0) node (A4) {$4$};
      \draw (4.0,0) node (A5) {$5$};
      \draw (5.0,0) node (A6) {$6$};
      \draw (6.0,0) node (A7) {$7$};
    \end{scope}

    \begin{scope}[shift={(0,0)}]
      \draw (1.0,0) node (B1) {$1$};
      \draw (2.0,0) node (B2) {$2$};
      \draw (3.0,0) node (B3) {$3$};
      \draw (4.0,0) node (B4) {$4$};
      \draw (5.0,0) node (B5) {$5$};
    \end{scope}

    \draw (A1) to[out=-75, in=110] (B5);
    \draw (A2) to[out=-90, in=90] (B2);
    \draw (A3) to[out=-90, in=110] (B4);
    \draw (A4) to[out=-90, in=90] (B3);
    \draw (A5) to[out=-110, in=80] (B1);
    \draw (A6) to[out=-90, in=70] (B2);
    \draw (A7) to[out=-90, in=90] (B5);

    \begin{scope}[shift={(0,-2.3)}]
      \draw (2.0,0) node (C1) {$1$};
      \draw (3.0,0) node (C2) {$2$};
      \draw (4.0,0) node (C3) {$3$};
    \end{scope}

    \draw (B1) to[out=-90, in=90] (C1);
    \draw (B2) to[out=-90, in=100] (C3);
    \draw (B3) to[out=-90, in=100] (C3);
    \draw (B4) to[out=-90, in=90] (C1);
    \draw (B5) to[out=-90, in=90] (C2);
    
    \draw (-0.8,1.1) node {\small $f$};
    \draw (-0.3,-1.1) node {\small $g$};

  \end{scope}
  \end{tikzpicture}
  \]
  
  \noindent
  or in table form by
  
  \begin{footnotesize}
  $$
  f : \begin{array}{ccccccc} 1&2&3&4&5&6&7\\ 5&2&4&3&1&2&5\end{array}
  \qquad \qquad
  g : \begin{array}{ccccc} 1&2&3&4&5\\ 1&3&3&1&2\end{array}
  \qquad \qquad
  h : \begin{array}{ccccccc} 1&2&3&4&5&6&7\\ 2&3&1&3&1&3&2\end{array}
  $$
  \end{footnotesize}%
  We get the fibre maps (with respect to $g$)

  \begin{footnotesize}
  $$
  f_1 : \begin{array}{cc} 1&2\\ 2&1\end{array}
  \qquad \qquad
  f_2 : \begin{array}{cc} 1&2\\ 1&1\end{array}
  \qquad \qquad
  f_3 : \begin{array}{ccc} 1&2&3\\ 1&2&1\end{array}
  $$
  \end{footnotesize}%

  An explicit way of calculating pita factorisation is to look through the bottom row in 
  a table and number the entries by order of appearance. This means that the
  first appearance of `1' gets number 1, the second appearance of `1' gets number 2, 
  and so on, and then continuing with the appearances of `2', etc. (This is the 
  well-known process called {\em standardisation of words} in combinatorics; see for 
  example \cite{Duchamp-Hivert-Thibon:0105065}.) Write this 
  sequence of numbers as a middle row:

  \begin{footnotesize}
  $$
  f : \begin{array}{ccccccc} 1&2&3&4&5&6&7\\ 
  6&2&5&4&1&3&7
  \\
  5&2&4&3&1&2&5
  \end{array}
  \qquad \qquad
  g : \begin{array}{ccccc} 1&2&3&4&5
  \\ 
  1&4&5&2&3
  \\
  1&3&3&1&2\end{array}
  \qquad \qquad
  h : \begin{array}{ccccccc} 1&2&3&4&5&6&7\\
  3&5&1&6&2&7&4
  \\
  2&3&1&3&1&3&2\end{array}
  $$
  \end{footnotesize}%
  The upper half of the table is then the $\pi$-factor and the lower part is
  the $\eta$-factor. We also need the factorisations

  \begin{footnotesize}
  $$
  f_1 : \begin{array}{cc} 1&2\\2&1\\ 2&1\end{array}
  \qquad \qquad
  f_2 : \begin{array}{cc} 1&2\\1&2\\ 1&1\end{array}
  \qquad \qquad
  f_3 : \begin{array}{ccc} 1&2&3\\1&3&2\\ 1&2&1\end{array}
  $$
  \end{footnotesize}%

  Finally, to calculate $(\beta_1)_\sigma$ we need to compose

  \begin{footnotesize}
  $$
  \eta_f : 
  \begin{array}{ccccccc} 1&2&3&4&5&6&7\\ 
  1&2&2&3&4&5&5
  \end{array}
  $$
  \end{footnotesize}%
  
  with $\pi_g$ and then pita factor. Here is $\pi_g \circ \eta_f$:

  \begin{footnotesize}
  $$
  \pi_g \circ \eta_f : 
  \begin{array}{ccccccc} 1&2&3&4&5&6&7\\ 
  1&4&4&5&2&3&3
  \end{array}
  $$
  \end{footnotesize}%
  
  so the $\pi$-factor is the upper part of

  \begin{footnotesize}
  $$
  \pi_g \circ \eta_f : 
  \begin{array}{ccccccc} 1&2&3&4&5&6&7\\ 
    1&5&6&7&2&3&4\\ 
  1&4&4&5&2&3&3
  \end{array}
  $$
  \end{footnotesize}%
  
  We also need $\pi_{(f_1,f_2,f_3)}$:
 
  \begin{footnotesize}
  $$
  \pi_{(f_1,f_2,f_3)} : 
  \begin{array}{ccccccc} 1&2&3&4&5&6&7\\ 
  2&1&3&4&5&7&6
  \end{array}
  $$
  \end{footnotesize}%
  
  Now we verify the equation $\beta_\sigma \circ \pi_f = 
  \pi_{(f_1,f_2,f_3)} \circ \pi_h
  $ by calculating both sides, finding in both cases
 
  \begin{footnotesize}
  $$
  \begin{array}{ccccccc} 1&2&3&4&5&6&7\\ 
  3&5&2&7&1&6&4
  \end{array}
  $$
  \end{footnotesize}%

  thereby finalising the example computation, verifying the coherence equation in 
  this case.
\end{blanko}

We now begin the more formal work towards Theorem~\ref{NOP}.
The first step is an important
observation about operadic functors:

 \begin{lemma}\label{fibrelax}
  Let $p:\E\to \B$ be a strict morphism between pre-simplicial groupoids such that for
  each $n\ge 0$ the functor $p_n:\E_n\to \B_n$ is a discrete fibration. Then $\E$ is
  pseudo-simplicial provided $\B$ is pseudo-simplicial.
\end{lemma}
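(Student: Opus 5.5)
The plan is to reduce each coherence equation for $\E$ to the corresponding equation for $\B$, using faithfulness of discrete fibrations on hom-sets. A pre-simplicial groupoid is pseudo-simplicial exactly when its specified invertible $2$-cells satisfy Jardine's $17$ families of equations \cite[(1.4.1)--(1.4.17)]{JARDINE1991103}. Each such equation asserts that two pasting composites of $2$-cells agree. Both composites are natural transformations between the same pair of composite functors $\E_m \to \E_\ell$ built from face and degeneracy operators. So the equation amounts to the equality, for every object $x\in\E_m$, of two parallel morphisms $a_x, b_x : F(x)\to G(x)$ in the groupoid $\E_\ell$.

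Since $p$ is a strict morphism, it commutes strictly with all face and degeneracy operators and carries each structural $2$-cell of $\E$ to the corresponding $2$-cell of $\B$. It follows that $p_\ell$ maps the pasting composite $a$ to the pasting composite in $\B$ given by the same diagram, and likewise for $b$. More precisely, $p_\ell(a_x)$ is the component at $p_m(x)$ of the left-hand side of the equation in $\B$. This is checked one pasting step at a time: whiskering by an operator and vertical composition are both preserved, because $p$ commutes with the operators on the nose. Since $\B$ is pseudo-simplicial, $p_\ell(a_x)=p_\ell(b_x)$.

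Now use that $p_\ell$ is a discrete fibration. A discrete fibration is faithful: given morphisms $a_x,b_x : F(x)\to G(x)$ with the same image, both are lifts with codomain $G(x)$ of the same morphism of $\B_\ell$. Uniqueness of such lifts forces $a_x=b_x$. (The paper's source does not say whether the discrete fibration lifts along codomains or domains. In either convention a common endpoint suffices, and in a groupoid the two conventions coincide anyway.) Hence every Jardine equation holds in $\E$, and $\E$ is pseudo-simplicial.

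The only step needing care is verifying that strictness of $p$ really transports pasting composites to pasting composites. This requires that $p$ strictly preserves each specified $2$-cell, including the $\gamma$, $\alpha$ and $\beta$ cells, which is part of the definition of a strict morphism of pre-simplicial groupoids. Given that, the argument is formal and uniform across all $17$ families. For $\E$ coming from Lemma~\ref{operfun} the components are even free, so in that case no case analysis is needed either.
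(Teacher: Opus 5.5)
Your proposal is correct and follows essentially the same argument as the paper. Strictness of $p$ sends each pasting composite of coherence cells in $\E$ to the same composite in $\B$, and uniqueness of lifts for the discrete fibrations $p_\ell$ (applied to parallel morphisms with a common codomain) forces the two composites in $\E$ to agree; you just spell out the componentwise reduction and the common-endpoint point that the paper leaves implicit.
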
 
  
\begin{proof}
  Since $p$ preserves coherence $2$-cells strictly, any composition of coherence
  cells in $\E$ is mapped by $p$ to precisely the same composition of 
  cells in $\B$. If
  in $\B$ two compositions of coherence cells coincide, then in $\E$ these two
  compositions must coincide as well, as otherwise this composition in $\B$ would have two
  different liftings to $\E$.
\end{proof}

\begin{theorem}\label{NOP}
  For any operadic category $\catO$, the pre-simplicial symmetric
  strict monoidal groupoid $\nop(\catO)$ is a coherent pseudo-simplicial
  symmetric strict monoidal groupoid whose only nontrivial $2$-cells
  are the $\beta_n$ constructed above.
\end{theorem}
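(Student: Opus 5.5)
The plan is to reduce the coherence of $\nop(\catO)$ to the coherence of $\nop(\Fin)$ via Lemma~\ref{fibrelax}, and to establish the latter from the pita-factorisation machinery of the companion paper.

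\emph{Step 1: which equations need checking.} All $2$-cells other than the $\beta_r$ are identities: the $\gamma$'s and $\alpha$'s, as shown in \ref{beta=pi}. Hence, of Jardine's 17 families, only those in which $\beta$ occurs are non-trivial, namely \eqref{beta-eq}, \eqref{beta-eq-bis} and \eqref{coherence-new-bis}. The others hold automatically, since both sides are composites of identity cells between strictly equal functors. By \cite[Lemma~8.3]{Batanin-Kock-Weber:2512.22794}:
\begin{itemize}
\item \eqref{coherence-new-bis} and all higher instances of \eqref{beta-eq} follow from \eqref{betacoherence-new};
\item \eqref{betacoherence-new} in turn holds in our setting because $\beta_{r+1}$ is defined from the same pita permutation of $\norm{f_1}$ as $\beta_r$, and $\dd_0$ is $\SSS$ of a map of sets.
\end{itemize}
Equation \eqref{beta-eq-bis} is immediate: on a top-degenerate simplex the relevant map $\norm{f_1}$ is $\norm{\tau}$, whose target is $\un 1$ by \ref{axQ:BM-abs(lt)}, so its pita $\pi$-factor is the identity. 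What remains is the principal equation \eqref{betacoherence1} in degree $3$.

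\emph{Step 2: reduction to $\Fin$.} The cardinality functor $\norm{\thg}:\catO\to\Fin$ is a strict operadic functor. This uses \ref{axQ:BM-abs(lt)}, and \ref{axQ:BM-67} for preservation of fibres and fibre maps. By Lemma~\ref{operfun} it induces a strict morphism of pre-simplicial groupoids $\nop(\catO)\to\nop(\Fin)$ which is levelwise a discrete fibration. One must check that the $\beta$-cells are preserved on the nose. This holds because in both cases $(\beta_r)_p$ is, by construction, the $\pi$-part of the pita factorisation of $\norm{f_1}$, with the individual entries identified by \ref{axQ:BM-fibres-of-local-fibres}. Lemma~\ref{fibrelax} then reduces the theorem to the case $\catO=\Fin$.

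\emph{Step 3: the case $\Fin$.} Here \eqref{betacoherence1} becomes the square \eqref{coh-expanded}: $\dd_1(\beta_1)_\sigma\circ\pi_f=\sum_i\pi_{f_i}\circ\pi_h$. I would prove it by uniqueness of pita factorisations.
\begin{itemize}
\item Both composites are bijections $m\to\sum_i\sum_{j}m_{ij}$.
\item Using diagram \eqref{big-red-triangle-F}, both composites followed by the order-preserving map $\eta(\pi_g\eta_f)$ yield $\pi_g\circ f$.
\item The right-hand path is fibrewise order-preserving, since $\pi_h$ is and each $\pi_{f_i}$ is. The left-hand path is fibrewise order-preserving because $\dd_1(\beta_1)_\sigma$ is a shuffle that preserves order inside each $m_j$, and $\pi_f$ preserves order on fibres of $f$.
\item Hence both are the $\pi$-factor of $\pi_g\circ f$ with respect to its codomain decomposition, and they coincide.
\end{itemize}
This is essentially the content of the coherence result from the pita paper, Proposition~\ref{pitapaper-prop}, which I would invoke directly.

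\emph{Main obstacle.} I expect the main difficulty to be the precise bookkeeping in Step 3: identifying $\dd_1(\beta_1)_\sigma$ with $\pi(\pi_g\eta_f)$, and checking that "fibrewise order-preserving" refers to the same target decomposition on both sides. The safest course is to rely on Proposition~\ref{pitapaper-prop} rather than a direct check.
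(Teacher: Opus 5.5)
Your route is correct, and it genuinely differs from the paper's in Step~3. Step~2 is exactly the paper's reduction: Lemma~\ref{operfun} applied to the cardinality functor, then Lemma~\ref{fibrelax}. Step~1 follows the paper's own commentary.

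The paper never checks \eqref{coh-expanded} directly. Instead it proves that $\nop(\Fin)$ is isomorphic, as a pre-simplicial groupoid, to the pita nerve $\fnerve(\Fin)$, matching the top face operators and the $\beta$-cells (Proposition~\ref{F=N}). It then imports full coherence from Proposition~\ref{pitapaper-prop}.

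Your direct check is sound, and you do not need to retreat from it. Put $j=\epsilon_{g,i}(j')$. Both composites carry the block $f^{-1}(j)\subseteq m$ order-preservingly onto the summand $m_{ij'}$:
\begin{itemize}
\item On the left, $\pi_f$ is order-preserving on $f^{-1}(j)$. Then $\dd_1(\beta_1)_\sigma$ moves the block $m_j$ rigidly to position $\pi_g(j)=(i,j')$, because $\dd_1$ is strict symmetric monoidal and $(\beta_1)_\sigma$ has underlying permutation $\pi_g$.
\item On the right, $\pi_h$ maps $h^{-1}(i)\supseteq f^{-1}(j)$ order-preservingly onto the summand $m_i$, carrying $f^{-1}(j)$ onto the fibre $(f_i)^{-1}(j')$. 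On that fibre $\pi_{f_i}$ is order-preserving.
\end{itemize}
Hence both composites equal $\pi(\pi_g\circ f)$. This argument reproduces the permutation computed in Example~\ref{7-5-3}.

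Your proposed fallback of invoking Proposition~\ref{pitapaper-prop} ``directly'' is not available as stated. That proposition concerns $\fnerve(\Fin)$, whose $n$-simplices are locally order-preserving chains of length $n$ and whose top faces are defined by refactorisation. Transporting its coherence to $\nop(\Fin)$ requires precisely Proposition~\ref{F=N}, which the paper identifies as where the hard work lies.

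There is also a small inaccuracy in your first bullet. The higher-degree instances of \eqref{beta-eq} do not follow from \eqref{betacoherence-new} alone. They follow from \eqref{betacoherence-new} together with the degree-$3$ principal square, via \cite[Lemma~8.3]{Batanin-Kock-Weber:2512.22794}.

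As for what each route buys:
\begin{itemize}
\item The paper's route delivers every coherence equation in every degree in one stroke and explains $\nop(\Fin)$ structurally. It rests on the nontrivial identification $\nop(\Fin)\cong\fnerve(\Fin)$ and on the companion paper's main theorem.
\item Your route proves the one essential square elementarily in a few lines. It still leans on \cite[Lemma~8.3]{Batanin-Kock-Weber:2512.22794} to dispose of \eqref{coherence-new-bis} and the higher-degree cases.
\end{itemize}
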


\begin{proof}
  Lemma~\ref{operfun} applied to the cardinality functor $\catO \to \Fin$ induces a
  strict morphism of pre-simplicial groupoids $\nop(\catO)\to \nop(\Fin)$, which is
  level\-wise a discrete fibration. By Lemma~\ref{fibrelax} it is thus enough to
  establish coherence for the special case of the operadic nerve of $\Fin$. This check
  could possibly be done by hand, but it would be very cumbersome and somewhat
  unsatisfactory. Instead we will establish coherence by showing that $\nop(\Fin)$ is
  isomorphic to another pseudo-simplicial groupoid $\fnerve(\Fin)$, namely the
  so-called pita nerve of $\Fin$, which is coherent as a consequence of the main 
  theorem in~\cite{Batanin-Kock-Weber:2512.22794}, quoted as 
  Proposition~\ref{pitapaper-prop} below.
   
  The hard work has now been shifted to the task of establishing the
  isomorphism $\nop(\Fin)=\fnerve(\Fin)$, but this statement serves also to elucidate
  the structure of $\nop(\Fin)$. We state (and prove) this isomorphism as
  Proposition~\ref{F=N} below.
\end{proof}

Before going into the technical details of the arguments of the isomorphism, the following intuitive
explanation may be helpful. Looking at $\nop(\Fin)$ we see that all chains are `too
short': in simplicial degree $n$ they have length $n-1$, say $T_{n} \to \cdots \to
T_1$. We can fix that by appending a map to the terminal object, setting $T_0=1$. Next
we exploit the coincidence between the operadic category $\Fin$ we work with and the
indexing category for the symmetric-monoidal-groupoid monad (the same
skeletal $\Fin$). An $n$-simplex is a tuple of chains of length $n-1$, say a
$k$-tuple. After appending terminal objects, we can now take ordinal sum of these $k$
chains to obtain a single $n$-chain
$$
\sum_{i\in k} (T_n)_i \to  \sum_{i\in k} (T_{n-1})_i \to \cdots \to 
\sum_{i\in k} (T_1)_i \to k .
$$
This chain is locally order-preserving, meaning that all the composite maps down to
$k$ are order-preserving. Conversely, for every locally order-preserving chain in
$\Fin$ ending in $k$, we get a $k$-indexed tuple of $(n{-}1)$-chains given as the
fibres over each $i\in k$. From this viewpoint, we arrive quite naturally at what we
call the pita nerve. (In the companion paper~\cite{Batanin-Kock-Weber:2512.22794}, we
work in a more general setting and arrive at the pita nerve from general features
of pita factorisation.)
    
\begin{definition}[Cf.~\cite{Batanin-Kock-Weber:2512.22794}]
  We define the {\em pita nerve} $\fnerve(\Fin)$. The objects of $\fnerve(\Fin)_{n}$ are
  locally order-preserving chains
$$
T_{n}\stackrel{f_n}{\to} \ldots\stackrel{f_1}{\to} T_0
$$
in $\Fin$, meaning that all composites of morphism that end in $T_0$ are 
order-preserving morphisms. The morphisms in $\fnerve(\Fin)_n$
are fibrewise order-preserving 
commutative diagrams
\begin{equation}\label{fsplit}
    \xymatrix@C = +1em@R=0.5em{
     T_{n} \ar[dd]_{f_{n}} \ar[rr]^{\sigma_n}& &T_{n} \ar[dd]^{a_n} 
           \\
           & & \\
\vdots  \ar[dd]_{f_1}&  \vdots&\vdots \ar[dd]^{a_1} 
      \\ 
    & & \\
     T_0 \ar[rr]_{\sigma_0} &  &T_0   
}    
\end{equation}
in which all horizontal arrows are bijections (quasibijections, in the more general setting of
\cite{Batanin-Kock-Weber:2512.22794}).
To be fibrewise order-preserving means that each of the horizontal arrows 
is fibrewise order-preserving with respect to the fibres over any 
element
down in $T_0$.

The simplicial structure is given as follows: since objects in $\fnerve(\Fin)_n$ are
chains of length $n$, we immediately have all the usual simplicial operators at hand,
except the top face operators, which do not work naively: simply omitting the last
object $T_0$ will generally destroy the property of being locally order-preserving,
because there is no guarantee that the maps down to $T_1$ are order-preserving just
because they are so all the way down to $T_0$. However, the properties of pita
factorisation (in strictly factorisable operadic categories) ensures that such a
not-locally-order-preserving chain can be reflected back into being an locally
order-preserving chain. We proceed to describe this reflection.
(See~\cite{Batanin-Kock-Weber:2512.22794} for all details). There are also more
details in the proof of Proposition~\ref{F=N} below.)

Because of this intermediate step of reflection, the whole structure will
not be a strict simplicial category anymore, but only a pseudo-simplicial
object. The only nontrivial coherence $2$-cells are of the form $\beta$
and they are essentially given by pita factorisation.
In detail, for a locally order-preserving chain
$$
p : \qquad T_n \to \cdots \to T_0,
$$
applying first $d_{n-1}$ (the next-to-top face operator)  gives $T_n \to \cdots \to T_2 
\to T_0$,  and applying next 
$d_{n-1}$ (the top face operator) will omit $T_0$ and then refactor 
all the maps down to $T_2$. Functoriality of these factorisations supplies
the comparison maps so as to get a chain
$$
T_n' \to \cdots T_3' \stackrel{\eta_{f_2}} \longrightarrow T_2 \,.
$$
On the other hand, if starting with the same locally order-preserving 
chain $p$, we first apply the top face operator $d_n$ we get the 
$\eta$-part of the chain $T_n \to \cdots \to T_1$, which is a locally 
order-preserving chain 
$$
T_n' \to \cdots T_2' \stackrel{\eta_{f_1}}\longrightarrow T_1 .
$$
When we now apply the top face operator a second time, we first delete $T_1$
and then retain the $\eta$-part of the result. This is a locally 
order-preserving  $(n{-}2)$-chain 
$$
T_n'' \to \cdots T_3'' \stackrel{\eta_{f_2}} \longrightarrow T_2' \,.
$$
These two chains are not the same, but uniqueness of pita factorisation
provides a comparison: in the bottom degree it is precisely
$$
T_2 \stackrel{\pi_{f_2}}\longrightarrow T_2' .
$$
All the higher comparison maps are also $\pi$-factors of pita 
factorisation of the appropriate composite maps in the chain, and
altogether they form a fibrewise order-preserving bijection, as required
to constitute a morphism in the groupoid $\fnerve (\Fin)_{n-2}$.
\end{definition}

One of the main results of \cite{Batanin-Kock-Weber:2512.22794} is that
these $\beta$-cells are coherent.
We record this as a proposition:

\begin{proposition}[{\cite{Batanin-Kock-Weber:2512.22794}}]\label{pitapaper-prop}
  $\fnerve(\Fin)$ is a coherent pseudo-simplicial groupoid, in which the only 
  nontrivial $2$-cells are (for $n\geq 0$)
  \[
\begin{tikzcd}
\fnerve(\Fin)_{n+2} \ar[d, "d_{n+1}"'] \ar[r, "d_{n+2}"] & \fnerve(\Fin)_{n+1} \ar[d, "d_{n+1}"]  \\
\fnerve(\Fin)_{n+1} \ar[r, "d_{n+1}"'] \ar[ru, Rightarrow, shorten <=14pt, shorten 
>=14pt, "\beta_n"]& \fnerve(\Fin)_n \,.
\end{tikzcd}
\]
\end{proposition}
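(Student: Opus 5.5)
The plan is to verify Jardine's coherence equations directly on $\fnerve(\Fin)$. Almost all of them will hold strictly, and the remaining ones will follow from the \emph{uniqueness} half of pita factorisation.

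\textbf{Step 1: strict identities.} All face and degeneracy operators other than the top face operators act on $\fnerve(\Fin)_n$ by literally omitting or repeating objects of a locally order-preserving chain. Removing an inner object or inserting an identity preserves local order-preservation, so these operators satisfy the simplicial identities on the nose. The same holds for their interaction with top faces in the $\alpha$- and $\gamma$-type cells. For example, $d_{n+1}s_n=\id$ holds because the pita factorisation of an order-preserving map is trivial. Hence only the $\beta_n$ are nontrivial. I would also check along the way that the $\beta$-components, defined levelwise as $\pi$-factors of the appropriate composites, are fibrewise order-preserving bijections commuting with the chain maps, so that they are genuine morphisms of $\fnerve(\Fin)_n$. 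This uses the functoriality of pita factorisation \eqref{func-pita} and its compatibility with ordinal sums.

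\textbf{Step 2: reduction to low degree.} Using the analogue of \eqref{betacoherence-new}, I would show that $d_0\beta_{n+1}$ is determined by $\beta_n$. The operator $d_0$ is faithful on the relevant morphisms, since it only forgets the top object and keeps the components of the bijections. Hence the cube equation \eqref{beta-eq} with all indices maximal need only be checked in degree $3$, i.e.\ in the form \eqref{coh-expanded}. The equations with lower indices reduce to naturality of $\beta$ with respect to the strict operators, and \eqref{coherence-new-bis} follows as noted before the statement. Equation \eqref{beta-eq-bis} is immediate, because on a top-degenerate simplex the relevant composite is order-preserving, so its $\pi$-factor is the identity.

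\textbf{Step 3: the principal equation (main obstacle).} The key principle I would establish first is a \emph{rigidity lemma}. Suppose two locally order-preserving chains end in the same $k$ via order-preserving maps. Then there is at most one bijection between them that is fibrewise order-preserving over $k$ and commutes with the maps to $k$. The reason is that within each fibre the bijection is forced to be the unique order isomorphism, and at higher levels the chain maps then determine it. Both composites in \eqref{coh-expanded} are composites of $\pi$-factors living over the common base $k$, as displayed in \eqref{big-red-triangle-F}.

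The hard part is to prove that each composite is fibrewise order-preserving over $k$, not merely over the intermediate bases $n$ or $\sum_i n_i$. For the lower path $\sum_i\pi_{f_i}\circ\pi_h$ this follows from the facts that $\pi_h$ is fibrewise order-preserving over $k$ and that $\sum_i \pi_{f_i}$ acts within the fibres over $k$. For the upper path $\pi(\pi_g\circ\eta_f)\circ\pi_f$ I would argue the other way round: the composite of bijections is the $\pi$-factor of $\pi_g\circ f$, by applying uniqueness of pita factorisation to $\pi_g\circ\eta_f\circ\pi_f$. I would then use that $\pi_g\circ f$ and $h$ have the same fibres over $k$ after applying $\eta_g$.

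Once both paths are known to be fibrewise order-preserving over $k$, the rigidity lemma gives equality in degree $3$. Step 2 then propagates the equation to all degrees. I expect this fibrewise bookkeeping, tracking which base each $\pi$ is order-preserving over, to be the real obstacle, and I would handle it by systematic appeal to \eqref{func-pita} and to the additivity of $\pi$ and $\eta$ under ordinal sums.
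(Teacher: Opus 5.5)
\textbf{Gap in Step 3.} The property you set out to prove is false. Neither composite in \eqref{coh-expanded} is fibrewise order-preserving over $k$, with respect to $h$ on $m$ and the order-preserving display map on $\sum_i\sum_j m_{ij}$. A bijection $m\to\sum_i\sum_j m_{ij}$ that commutes with the maps to $k$ and is order-preserving on each $h^{-1}(i)$ is forced to be $\pi_h$; this is exactly the rigidity you invoke. So your argument would show that both composites equal $\pi_h$, i.e.\ that $\sum_i\pi_{f_i}=\id$.

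Example~\ref{7-5-3} refutes this. There $\pi_h=3\,5\,1\,6\,2\,7\,4$ and $\pi_{(f_1,f_2,f_3)}=2\,1\,3\,4\,5\,7\,6\neq\id$. Both composites equal $3\,5\,2\,7\,1\,6\,4$, which reverses the order of $h^{-1}(1)=\{3,5\}$.

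Both of your specific inferences fail:
\begin{itemize}
\item $\sum_i\pi_{f_i}$ preserves the blocks over $k$ but permutes inside them, so composing with it destroys order-preservation over $k$.
\item Order-preservation on the fibres of $\pi_g\circ f$ does not pass to the coarser fibres of $h=\eta_g\circ\pi_g\circ f$.
\end{itemize}

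\textbf{Repair.} The right base is the one you dismissed, $\sum_i n_i$. You correctly showed that the upper path is $\pi(\pi_g\circ f)$. For the lower path, \eqref{func-pita} gives
$\eta(\eta(f/g))\circ\pi_{\eta(f/g)}\circ\pi_h=\eta(f/g)\circ\pi_h=\pi_g\circ f$.
On each fibre $f^{-1}(j)\subseteq h^{-1}(g(j))$, the map $\pi_h$ is order-preserving into the fibre of $\eta(f/g)$ over $\pi_g(j)$. On that fibre $\pi_{\eta(f/g)}=\sum_i\pi_{f_i}$ is order-preserving. Hence the lower path is also the $\pi$-factor of $\pi_g\circ f$, and uniqueness of pita factorisation closes the square. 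As a check, standardising $\pi_g\circ f=3\,4\,2\,5\,1\,4\,3$ in the example gives $3\,5\,2\,7\,1\,6\,4$.

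With this repair your direct verification goes through. It is a genuine alternative to the paper, which does not check this square by hand but imports coherence of $\fnerve(\Fin)$ from the general pita-factorisation theory of the companion paper.

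\textbf{Smaller point in Step 2.} The faithfulness of $d_0$ is not because it ``keeps the components''; it discards the top one. What you need is that this discarded component is determined by the next one, via commutativity and fibrewise order-preservation.
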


We are now ready for the main argument:

\begin{proposition}\label{F=N}
  The pre-simplicial groupoid $\nop(\Fin)$ is isomorphic to the
  pseudo\-simplicial groupoid $\fnerve(\Fin)$.
\end{proposition}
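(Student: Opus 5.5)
We construct an explicit levelwise isomorphism of groupoids $\Phi_n : \nop(\Fin)_n \to \fnerve(\Fin)_n$ and check that it commutes strictly with all face and degeneracy operators and carries the $\beta$-cells of $\nop(\Fin)$ to those of $\fnerve(\Fin)$.

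\emph{Definition of $\Phi_n$ on objects.} An object of $\nop(\Fin)_n=\SSS\Fin_{n-1}$ is a $k$-tuple of $(n{-}1)$-chains $p_i=((T_n)_i\to\cdots\to(T_1)_i)$. We send it to the ordinal sum
\[
\sum_i (T_n)_i\to\cdots\to\sum_i (T_1)_i\to k ,
\]
whose last map is the ordinal sum of the unique maps $(T_1)_i\to \un 1$. Every composite down to $k$ is then order-preserving, so the chain is locally order-preserving. Conversely, a locally order-preserving chain ending in $T_0=k$ is recovered by taking fibres over each $i\in k$. Since the composites to $k$ are order-preserving, the fibre inclusions $\epsilon$ are interval inclusions and the chain is literally the ordinal sum of its fibre chains. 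This gives a bijection on objects.

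\emph{Definition of $\Phi_n$ on morphisms.} A morphism in $\SSS\Fin_{n-1}$ is a permutation $\rho$ of $k$ together with identities of chains $p_i=q_{\rho i}$, since $\Fin_{n-1}$ is discrete. We send it to the block permutation induced by $\rho$ at every level, with $\sigma_0=\rho$. This diagram commutes and is fibrewise order-preserving. Conversely, a fibrewise order-preserving bijection of chains over $\sigma_0$ must, on each fibre over $i$, be the unique order-preserving bijection onto the fibre over $\sigma_0(i)$. It is therefore determined by $\sigma_0$, and it forces the corresponding fibre chains to be equal. So $\Phi_n$ is an isomorphism of groupoids. (For $n=0$ both sides are $\SSS$ of a point, namely $\Fin$'s groupoid of bijections.)

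\emph{Compatibility with the strict operators.} For the non-top faces $d_j$ and all degeneracies $s_j$, $j\le n-1$, the operators act fibrewise on $\nop(\Fin)$ via $\SSS$. On $\fnerve(\Fin)$ they act by deleting or repeating $T_j$ (respectively inserting an identity), which commutes with ordinal sums. The top degeneracy on $\nop(\Fin)$ appends the map to the chosen terminal $\un 1$ in each fibre. Under $\Phi$ this corresponds to inserting $\id_{T_0}$ at the bottom: $\sum_i \un 1 = k$, and the map to $k$ becomes the identity. These checks are direct.

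\emph{Compatibility with the top face and the $\beta$-cells.} This is the main step. Take a locally order-preserving chain $T_n\to\cdots\to T_1\to k$. In $\nop(\Fin)$, $\dd_n$ of the $k$-tuple of fibre chains lists, for each $i$ and each $j\in(T_1)_i$, the chain of fibre maps over $j$. Under $\Phi$ this is the ordinal sum over $T_1$, taken in the order of $T_1$, of the fibres of the composites to $T_1$. In $\fnerve(\Fin)$, the top face deletes $k$ and reflects by taking $\eta$-parts of the pita factorisations of the composites down to $T_1$. By the explicit description of $\eta$ (ordinal sum of the fibres, listed by order of the codomain) and the functoriality \eqref{func-pita}, with the comparison maps $\eta(f/g)$ identified with sums of fibre maps as in \eqref{eta-as-fibre-maps}, the reflected chain is exactly that ordinal sum. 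The reason is that the fibre maps of $\Fin$ are defined via the same order-preserving identification $\epsilon$. Hence $\Phi\,\dd_n=d_n\,\Phi$ strictly.

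For the $\beta$-cells, both sides are determined by their bottom component. In $\nop(\Fin)$ this component is $\pi_{f_1}$ by \ref{beta=pi}, and in $\fnerve(\Fin)$ it is $\pi_{f_2}$ with the indexing shifted by the appended $T_0$. These match. Since $\Phi$ is faithful on morphisms, which are determined by $\sigma_0$, the cells correspond. The step I expect to be the hardest is this top-face comparison: one has to keep track of the identification $\epsilon$ in the nested fibres (Axiom~\ref{axQ:BM-fibres-of-local-fibres} for $\Fin$) and check that it agrees with the $\eta$-reflection chosen in the pita nerve. I would first verify it for $n=2$ and $n=3$, using the triangle computation \eqref{big-red-triangle-F}, and then argue inductively along the chain using the compatibility of pita factorisation with ordinal sums.
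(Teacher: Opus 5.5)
Your proposal is correct and follows the paper's proof: the same isomorphism (append the terminal, then take ordinal sums, with inverse given by taking fibres), the same top-face comparison via pita factorisation and sums of fibre maps, and the same matching of the $\beta$-cells through their bottom $\pi$-factor. The only minor difference is on morphisms. You show directly that a fibrewise order-preserving bijection is determined by $\sigma_0$, whereas the paper phrases the same fact as ``all composites of $d_0$ are discrete opfibrations'' in both nerves.
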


\begin{proof}  
  For $p\in \nop(\Fin)_n$ (that is, a chain $p = (T_n\to\cdots\to T_1)$ of length
  $n-1$ in $\Fin$), let $\tilde{p}$ be the chain of length $n$ given by
  $$
  T_n\to\ldots\to T_1\to 1 .
  $$ 
  This is clearly a locally order-preserving chain. A general object in $\nop(\Fin)_n$
  is a list $p_1,\ldots,p_k$ of $(n{-}1)$-chains of morphisms. We then take the
  ordinal sum
  $\tilde{p}_1+\ldots+\tilde{p}_k$ to obtain a locally order-preserving
  chain again, so it is an object of $\fnerve(\Fin)_n$.

  For the inverse construction, let $p =(T_{n}\xto{f_n}\ldots\xto{f_1}
  T_{0} = k)$ be a locally order-preserving chain which we write as
  a commutative diagram
  \begin{equation*} 
    \xymatrix@C = +3em@R = +2em{
      T_n     \ar[r]^{f_{n}} \ar@/^-3.3ex/[drr]_{h_{n}} &  T_{n-1}      
      \ar[r]^{f_{n-1}}  \ar@/^-1.5ex/[dr]_{h_{n-1}} & \ldots 
      \ar[r]^{f_2} \ar[d] &   T_1    \ar@/^+1.3ex/[dl]^{f_1=h_1}
      \\
      &&T_0 &
    }
  \end{equation*}
  We then construct a list of chains of morphisms between fibres: 
  $$
  p_i = (h_{n}^{-1}(i)\xto{(f_{n})_i} \ldots \xto{(f_{1})_i} h_{1}^{-1}(i)),
  \ i\in |T_0| = k .
  $$
  We have to check that for this list we have the equality $\tilde{p}_1
  +\ldots +\tilde{p}_k = p$. But this follows because $p$ was a
  locally order-preserving chain. This describes the effect on objects 
  in simplicial degree $n$.

  Next we check that this bijective correspondence works on morphisms as well, so as
  to produce an isomorphism of groupoids in simplicial degree $n$. The morphisms in
  the groupoid $\nop(\Fin)_n$ are permutations of the indexing set $k$. Under the
  construction just described, they produce morphisms between $n$-chains with the
  special property that in each level they are order-preserving on the fibres of the
  bottom permutation $T_0 \isopil T_0$ (which is just the original permutation
  $k\isopil k$). In other words, they are the $\pi$-part of the pita factorisations of
  the composite maps down to $T_0$. We claim that we get in this way also an
  isomorphism on automorphism groups. 
  
  In degree zero, it is clear that we have an identification between $
  \mathfrak{S}_k$ and $\Aut(T_0)$; in higher simplicial degree the claim follows
  because in both simplicial groupoids, all composites of $d_0$ are discrete
  opfibrations. In $\nop(\Fin)$ this is clear because $d_0$ is $\SSS$ of a map
  of sets, and in $\fnerve(\Fin)$ this is a consequence of uniqueness of pita
  factorisation (see \cite[Lemma~7.5]{Batanin-Kock-Weber:2512.22794} for details). We
  have now established an isomorphism of groupoids $\nop(\Fin)_n=
  \fnerve(\Fin)_n$ in each degree $n$.
    
  Next we need to match up the generating data, namely the simplicial operators and
  the $2$-cells $\beta$. It is clear that all the lower part of the simplicial
  structure matches up. The only nontrivial part is to check the top face operators
  and the $\beta$-cells. The top face operators and $\beta$ in $\fnerve(\Fin)$ are
  essentially given by pita factorisation, as explained above.
  The claim is that this square commutes:
  \[
  \begin{tikzcd}
  \nop(\Fin)_{n-1} \ar[d] & \nop(\Fin)_n \ar[l, "\dd_n"'] \ar[d]  \\
  \fnerve(\Fin)_{n-1} & \fnerve(\Fin)_n \ar[l, "\dd_n"]
  \end{tikzcd}
  \]
  
  For a $k$-tuple of $(n{-}1)$-chains $(p_i)_{i\in k} \in
  \nop(\Fin)_n$, if we first apply the top face operator $\dd_n$ up in
  $\nop(\Fin)$, we obtain the list of $(n{-}2)$-chains indexed and
  ordered according to the ordinal sum $\sum_{i\in k} (T_1)_i$. The
  isomorphism down to $\fnerve(\Fin)_{n-1}$ then actually effectuates
  this ordinal sum to produce a single $(n{-}1)$-chain (a locally order-preserving
  chain). The other way around in the diagram, we first take the
  ordinal sum over $k$ to get a locally order-preserving $n$-chain whose last
  arrow is $\sum_{i\in k} (T_1)_i \to k$. The top face operator down in
  $\fnerve(\Fin)$ then omits the map to $k$, and performs pita factorisation of
  all the maps to $\sum_{i\in k} (T_1)_i$. Both ways around in the diagram we thus
  arrive at a locally order-preserving $(n{-}1)$-chain whose last object is
  $\sum_{i\in k} (T_1)_i$ with order inherited from the ordinal sum. The
  chains before the last object are also the same, as they are just the
  fibres of the original chain over the elements in the various $(T_1)_i$.
  
  It remains to see that the beta $2$-cells match up, but we have already done the
  work, by describing the components of $\beta$ as being in both cases the
  $\pi$-factor of the pita factorisation of the map $f_2$.
\end{proof}

\section{Operadic categories as `simplicial sets'}
\label{sec:as-ssets}

Recall that by construction of the operadic nerve of an operadic category
	$\catO$ we have the equality
	$$
	\uuu\upperstar (\nop(\catO)) = \SSS\nerexs(\catO)  \,,
	$$
	and note that the right-hand side exists for any category with chosen local
	terminals.

We are going to identify conditions on an abstract $\X$ under which the 
operadic-category structure on $\catO$ can be reconstructed.
A main task is to identify the image of $\nop$.

\begin{definition}\label{shuffle}
  Let $\X$ be a pseudo-simplicial symmetric strict monoidal groupoid, such that
  $\uuu\upperstar \X$ is the local-terminals nerve of a category $\catO$ with chosen local 
  terminals. That is, we have $\uuu\upperstar \X = \SSS\nerexs(\catO)$. 
  Since each symmetric monoidal groupoid $\X_r$ is free on a set, in 
  particular the coherence $2$-cells are essentially just permutations, and the 
  following condition makes sense:
  
  We say that 
  $\X$ is {\em shuffle-like} when for each $(r+2)$-simplex $f\in \X_{r+2}$, the coherence cell
  $$
  \beta_{r}(f) :d_{r+1}d_{r+1}(f)\Rightarrow d_{r+1} d_{r+2}(f)
  $$
  is a shuffle. This means that it is of form
  $$
  \sum_{j=1}^n m_j \isopil m
  $$
  required to be order-preserving on each summand (so it is more precisely an 
  $n$-shuffle), as detailed in the next example.
\end{definition}

\begin{remark}
  Note that among the coherence equations we have $d_0 \beta_{r} (\sigma) =
  \beta_{r-1} (d_0 \sigma)$. Since $d_0$ is strict monoidal, it sends a list
  permutation to the same permutation applied to the list of images. So
  $\beta_{r}(\sigma)$ and $\beta_{r-1}(d_0(\sigma))$ have the same underlying
  bijection of sets. A consequence of this is that the bijection of sets underlying
  $\beta_r(\sigma)$ only depends on the last principal edge of $\sigma$. It also means
  that all coherence cells are determined by $\beta_0$.
\end{remark}

\begin{example}\label{ex:X3}
  A generator for $\X_3$ is a commutative triangle in $\catO$, say
  \[
  \begin{tikzcd}[column sep={30pt,between origins}]
  T \ar[rr, "f"] \ar[rd, "h"'] & \ar[d, phantom, pos=0.4, "\sigma" description] & S 
  \ar[ld, "g"]  \\
   & R &
  \end{tikzcd}
  \]
  Then we have
  $\dd_0(\sigma) = g$,  $\dd_1(\sigma) = h$,  $\dd_2(\sigma)= f$, and
  $\dd_3(\sigma) =: (f_1,\ldots,f_k)$.
  (Here the finite set  $k$ is thus defined as the set  that indexes the 
  list $\dd_3(\sigma)$.)
  We also have the lists $(T_1,\ldots,T_k) :=  \dd_2(h) = \dd_2 \dd_1 (\sigma) =
  \dd_1 \dd_3(\sigma) = \dd_1(f_1,\ldots,f_k)$ and
  $(S_1,\ldots,S_k) := \dd_2(g) = \dd_2 \dd_0(\sigma) = \dd_0 \dd_3(\sigma) = 
  \dd_0(f_1,\ldots,f_k)$,
  showing that as expected we have
  $$
  f_i : T_i \to S_i .
  $$
  We also need 
  $n$ := the set that indexes the list $\dd_2(f)$ and        
  $n_i$ := the set that indexes the list $\dd_2(f_i)$.
  Now $\beta_1(\sigma) : \dd_2 \dd_2 (\sigma) \to \dd_2 \dd_3(\sigma)$
  is the morphism in the groupoid $ \X_1 = \SSS\catO_0$ 
  $$
  (T_1,\ldots,T_n) \stackrel{\sim}\longrightarrow (T_{11},\ldots,T_{1n_1}, 
  T_{21},\ldots,T_{2n_2},\ \ldots,\ T_{k1},\ldots,T_{k n_k}) .
  $$
  Since the set $\catO_0$ is discrete, the morphism in the groupoid $\SSS\catO_0$
  are just permutations: the
  same objects appear in these two lists, but in different order, and 
  $\beta_1(\sigma)$ has underlying permutation of sets 
  $$
  n \isopil \sum_{i=1}^k n_i
  $$
  (which is also the underlying permutation of $\beta_0(g)$).
  To be an $n$-shuffle means that
  each sum inclusion followed by the inverse bijection
  $$
  n_i \subset \sum_{i=1}^k n_i \isopil n
  $$
  is order-preserving.
  
  To describe $\dd_1(\beta_1(\sigma))$, the main point is that the top face 
  operator
  $\dd_1$ is a symmetric monoidal functor, so it returns the 
  concatenation of the lists corresponding to each entry in the input list.
  The resulting bijection 
  $$
  \dd_1(\beta_1(\sigma)) : \sum_{j=1}^n m_j \isopil \sum_{i=1}^k \sum_{j=1}^{n_i} 
  m_{ij} 
  $$
  is thus the extension of
  $\beta_0(g) = \beta_1(\sigma)$ to a block permutation, meaning that
  $x\in m_j$ is sent to the same $x$ but now in the summand $m_{ij}$ (where $i=g(j)$).
  We record this as a lemma:
\end{example}

\begin{lemma}\label{shuffle-consequence}
  The coherence bijections $\beta_0(g)$ and 
  $\dd_1(\beta_1(\sigma))$ are related by the commutative square
\[
\begin{tikzcd}[column sep={8.5em,between origins},row sep={4.3em,between origins}]
\sum_{j=1}^n m_j \ar[d, 
"\dd_1(\beta_1(\sigma))"'] \ar[r] & n
\ar[d, "\beta_0(g)"]  \\
\sum_{i=1}^k \sum_{j=1}^{n_i} 
m_{ij} \ar[r] & \sum_{i=1}^k n_i 
\end{tikzcd}
\]
  where the horizontal maps are the `display maps' of the sums.
\end{lemma}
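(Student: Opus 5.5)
The plan is to work in the abstract setting of Definition~\ref{shuffle}: $\X$ is a pseudo-simplicial symmetric strict monoidal groupoid with $\uuu\upperstar\X = \SSS\nerexs(\catO)$, and all face operators, including the top ones, are symmetric strict monoidal. The lemma compares two bijections. The first is $\beta_1(\sigma)$, a morphism in $\X_1=\SSS\catO_0$ with underlying bijection $n\isopil\sum_i n_i$. The second is its image $\dd_1(\beta_1(\sigma))$ in $\X_0=\SSS\catO_{-1}$.

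First I identify the underlying bijection of $\beta_1(\sigma)$ with that of $\beta_0(g)$. This uses the coherence equation \eqref{betacoherence-new} (Jardine's $(1.4.1)$ with lower index $0$). Its horizontal cells are identities, so it reads $\dd_0(\beta_1(\sigma)) = \beta_0(\dd_0\sigma)=\beta_0(g)$. Because $\dd_0=\SSS d_0$ is $\SSS$ applied to a map of sets, it does not change the underlying permutation of a morphism in a free symmetric monoidal groupoid. Hence the permutations underlying $\beta_1(\sigma)$ and $\beta_0(g)$ agree, as a bijection $n\isopil\sum_{i=1}^k n_i$ of indexing sets. This is the remark preceding Example~\ref{ex:X3}.

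Next I compute $\dd_1$ applied to a morphism of $\SSS\catO_0$. The key fact is that $\dd_1:\X_1\to\X_0$ is symmetric strict monoidal between free symmetric strict monoidal groupoids. On a generator $T_j$ it produces a list indexed by a set $m_j$. On a list $(T_1,\dots,T_n)$ it therefore produces the concatenation indexed by $\sum_j m_j$. On a permutation morphism $\rho$ it produces the block permutation induced by $\rho$, which moves the block $m_j$ rigidly to the position of $\rho(j)$. This rigidity holds because the symmetry isomorphisms of a free symmetric strict monoidal groupoid are block permutations, and a strict symmetric monoidal functor sends symmetries to symmetries. The only morphisms involved here are pure permutations, because $\catO_0$ is discrete, so every morphism of $\SSS\catO_0$ is a composite of symmetries. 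The target summand of $m_j$ is $m_{ij}$ with $i=g(j)$, since $m_{ij}$ is by definition $\dd_1$ of the entry $T_{ij}=T_j$ (Axiom~\ref{axQ:BM-fibres-of-local-fibres} in the nerve case, and in the abstract case simply because the entries match).

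Finally I assemble the square. A block permutation over $\rho$ is exactly a bijection of total sets that commutes with the display maps $\sum_j m_j\to n$ and $\sum_i\sum_j m_{ij}\to\sum_i n_i$, together with $\rho$ on the bases. Taking $\rho$ to be the underlying bijection of $\beta_1(\sigma)$, which equals $\beta_0(g)$ by the first step, gives the commutative square. I expect the main obstacle to be making the middle step rigorous: that a symmetric strict monoidal functor between free symmetric strict monoidal groupoids on sets acts on permutations by the induced block permutation. I would prove this by writing a permutation as a composite of symmetries $\sigma_{A,B}$ tensored with identities, and then using the strict preservation $F(\sigma_{A,B})=\sigma_{FA,FB}$.
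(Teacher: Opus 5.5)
Your proposal is correct and follows essentially the same route as the paper, which records this lemma at the end of Example~\ref{ex:X3}. There, as in your argument, the underlying bijection of $\beta_1(\sigma)$ is identified with that of $\beta_0(g)$ via the coherence equation $\dd_0\beta_1=\beta_0\dd_0$, using that $\dd_0=\SSS d_0$ preserves underlying permutations; then $\dd_1$, being symmetric strict monoidal, sends this permutation to the induced block permutation. Your decomposition of permutations into symmetries tensored with identities simply makes rigorous the block-permutation step that the paper states in one line.
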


\begin{lemma}
  For any operadic category, the pseudo-simplicial groupoid $\nop(\catO)$ satisfies
  the shuffle condition.
\end{lemma}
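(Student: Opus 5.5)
We prove the shuffle condition by reducing it to the bottom-degree cell and then reading off the explicit description of that cell given in \ref{beta=pi}.

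Step 1 (reduction to $\beta_0$). By the Remark following Definition~\ref{shuffle}, the coherence equation \eqref{betacoherence-new} and its higher analogues give
$$
d_0\beta_r(f)=\beta_{r-1}(d_0 f).
$$
Since $d_0=\SSS d_0$ is strict monoidal and acts on lists entrywise, $\beta_r(f)$ and $\beta_{r-1}(d_0 f)$ have the same underlying bijection of indexing sets. Iterating, the bijection underlying $\beta_r(f)$ is that of $\beta_0(g)$, where $g$ is the last principal edge of $f$.

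We do not even need coherence for this step. The explicit construction in \ref{beta=pi} already defines $(\beta_r)_p$, for every $r$, as the $\pi$-part of the pita factorisation of $\norm{f_1}$, where $f_1$ is the bottom map of the chain. So it suffices to treat $\beta_0$ applied to a single generator $g\colon S\to R$ of $\X_2=\SSS\catO_1$. Since $\beta$ is a monoidal transformation between strict monoidal functors, its component on a list is the block sum of its components on the entries. A block sum of shuffles is again a shuffle with respect to the concatenated decomposition, so generators suffice.

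Step 2 (identify $\beta_0(g)$). By construction (the $r=0$ case of \ref{beta=pi}, obtained from the $r=1$ case with $f=\id$ via Axiom~\ref{axQ:BM-fibres-of-tau-maps}), $\beta_0(g)$ is the bijection
$$
\pi_{\norm g}\colon \norm S\isopil \sum_{i\in\norm R}\norm g^{-1}(i).
$$
This uses Axiom~\ref{axQ:BM-67} to identify the length of the list $\dd_1\dd_2(g)$ with $\sum_i \norm{g^{-1}(i)}=\sum_i\norm g^{-1}(i)$.

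Step 3 (pita gives a shuffle). By the defining property of pita factorisation, $\pi_{\norm g}$ is assembled from the order-preserving injections $\epsilon_{\norm g,i}$ of \eqref{eq:10}. Hence each composite
$$
\norm g^{-1}(i)\subset \sum_i \norm g^{-1}(i)\isopil \norm S
$$
with the inverse bijection is exactly $\epsilon_{\norm g,i}$, which is order-preserving. This is precisely the $n$-shuffle condition of Example~\ref{ex:X3}, with $n=\norm S$ and $n_i=\norm g^{-1}(i)$.

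Main obstacle. The only delicate point is bookkeeping in Step 2: matching the direction and indexing conventions of $\beta$ (domain $d_{r+1}d_{r+1}$, codomain $d_{r+1}d_{r+2}$) with those of the shuffle definition. In particular, one must check that the identification of the individual entries via Axiom~\ref{axQ:BM-fibres-of-local-fibres} introduces no extra permutation inside the fibres. That holds because those identifications are equalities of objects, so the whole permutation is carried by $\pi_{\norm g}$, which is fibrewise order-preserving.
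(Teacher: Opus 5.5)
Your proof is correct and follows the paper's argument. The paper simply notes that the $\beta$-cells were defined in \ref{beta=pi} as $\pi$-factors of pita factorisations, hence are fibrewise order-preserving, which is exactly the shuffle condition; your reduction to generators via block sums and your identification of the restrictions of $\pi_{\norm g}^{-1}$ with the injections $\epsilon_{\norm g,i}$ spell out the details the paper leaves implicit.
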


\begin{proof}
  We already noted in \ref{beta=pi} that the $\beta$ are given as the $\pi$-factor of
  pita factorisation. It is thus fibrewise order-preserving, which translates 
  precisely into the shuffle condition.
\end{proof}

\begin{definition}
  A pseudo-simplicial groupoid is called {\em top-pseudo} when the only nontrivial
  coherence $2$-cells are the $\beta$-cells
		\begin{equation*}
			\beta_{r}:d_{r+1}d_{r+1}\Rightarrow d_{r+1} d_{r+2}\quad : \qquad \X_{r+2} \to 
            \X_r \qquad r\ge 0 .
		\end{equation*}
  That is, all simplicial identities that
  do not involve two consecutive top face operators are strict identities.
\end{definition}

We are now ready to give the inverse statement to Theorem~\ref{NOP}.

\begin{theorem}\label{OE}
	Let $\catO$ be a category with chosen local terminals. Suppose $\X$ is a
	top-pseudo-simplicial symmetric strict monoidal groupoid such that
	$$
	\uuu\upperstar (\X) =\SSS\nerexs(\catO) .
	$$ 
    If $\X$ is a shuffle-like then there is a 
    unique operadic-category structure on $\catO$ such that
	$$
    \X = \nop(\catO) .
    $$
\end{theorem}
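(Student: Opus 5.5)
We reconstruct the operadic-category data from $\X$ in three steps (cardinality, fibres, axioms) and then check that $\nop(\catO)=\X$; uniqueness will be automatic because every piece of data is read off from $\X$.

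\emph{Cardinality and fibres.} For an object $R\in\catO=\catO_0$, viewed as a generator of $\X_1=\SSS\catO_0$, we set $\norm{R}$ to be the finite set indexing the list $\dd_1(R)\in\X_0=\SSS\catO_{-1}$. For a morphism $g:S\to R$, a generator of $\X_2$, the top face $\dd_2(g)\in\X_1$ is a list. The strict identities $\dd_1\dd_2=\dd_1\dd_1$ and $\dd_0\dd_2=\dd_1\dd_0$ give $\dd_1\dd_2(g)=\dd_1(R)$, so this list is indexed by $\norm R$. We take $g^{-1}(i)$ to be its $i$th entry. For a triangle $\sigma$ (a generator of $\X_3$) with edges $f$, $g$, $h=gf$, the list $\dd_3(\sigma)=(f_1,\dots,f_k)$ supplies the fibre maps. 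As in Example~\ref{ex:X3}, the strict identities show $f_i:h^{-1}(i)\to g^{-1}(i)$. Functoriality of $\fib_{R,i}$ comes from applying $\dd_4$ to $4$-simplices, using the strict identities $\dd_j\dd_4=\dd_3\dd_j$ for $j<3$ and degeneracy identities. Axiom~\ref{axQ:BM-fibres-of-identities} follows from $\dd_2 s_1=s_1\dd_1$ together with $\gamma=\id$. Axiom~\ref{axQ:BM-fibres-of-tau-maps} follows from $\dd_{r+1}s_r=\id$, applied to the top degeneracies (which append $\tau$).

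\emph{The cardinality functor.} Define $\norm g:\norm S\to\norm R$ as the composite of the display map $\sum_i\norm{g^{-1}(i)}\to\norm R$ with the underlying bijection $\beta_0(g):\norm S\isopil\sum_i \norm{g^{-1}(i)}$. This is the key step: the cardinality of morphisms is hidden in $\beta$. The shuffle condition makes $\beta_0(g)$ fibrewise order-preserving, so $(\beta_0(g),\text{display})$ is exactly the pita factorisation of $\norm g$. This gives the first half of Axiom~\ref{axQ:BM-67} and Axiom~\ref{axQ:BM-abs(lt)}; the latter holds because $\beta_0(s_0 x)$ is trivial by \eqref{beta-eq-bis}, while $\dd_1$ of a local terminal is a single entry. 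For functoriality, $\norm{gf}=\norm g\norm f$, we would use the coherence equation \eqref{betacoherence1} together with Lemma~\ref{shuffle-consequence}. Running diagram \eqref{big-red-triangle-F} in reverse, the commutative square of \eqref{coh-expanded} plus the display-map square yield $\norm h=\norm g\circ\norm f$. Identities are handled via $\beta_0(s_0)=\id$. We expect this functoriality argument, and the companion identity $\norm{f_i}=\norm f_i$ (which comes from comparing $\beta_0(f_i)$ with the blocks of $\dd_1\beta_1(\sigma)$ through the same square), to be the main obstacle: one must carefully track which bijection is meant, and use that the shuffle property pins permutations down uniquely.

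\emph{Remaining axioms and identification.} Axiom~\ref{axQ:BM-fibres-of-local-fibres} is read off from $\beta_1(\sigma):\dd_2\dd_2\sigma\to\dd_2\dd_3\sigma$. Since $\catO_0$ is discrete, a morphism of $\SSS\catO_0$ identifies entries, and the permutation is $\beta_0(g)$ by the Remark. Hence $(f_i)^{-1}(j)=f^{-1}(\epsilon j)$. The statement $(h_i)_j=h_{\epsilon j}$ follows in the same way from $\beta_2$ on $4$-simplices, whose underlying permutation is again $\beta_0(g)$. Finally, $\nop(\catO)$ and $\X$ agree on $\uuu\upperstar$ by hypothesis, and by construction they have the same top faces $\dd_r$ for $r\le 3$. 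Higher top faces agree by iterating the strict identities $\dd_j\dd_r=\dd_{r-1}\dd_j$ ($j<r-1$): these determine the entries of $\dd_r(p)$ through its images under lower faces, because a chain in $\catO_{r-2}$ is determined by its edges. The $\beta$-cells agree because both are determined by $\beta_0$ (the Remark), and $\beta_0$ equals the $\pi$-factor of $\norm g$ in both. Uniqueness holds because the fibres, fibre maps and cardinality were forced by $\dd_2$, $\dd_3$ and $\beta_0$.
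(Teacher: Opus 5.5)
Your proposal is correct and follows essentially the same route as the paper. Cardinality of objects comes from $\dd_1$, and cardinality of morphisms is the display map composed with $\beta_0$, which is a pita factorisation thanks to the shuffle condition. Functoriality comes from the coherence square \eqref{betacoherence1} together with Lemma~\ref{shuffle-consequence}, fibres and fibre maps come from $\dd_2,\dd_3$, and each axiom is a strict or $\beta$-simplicial identity. The only difference is that the paper packages the cardinality functor as a levelwise free strict map $\X\to\nop(\Fin)$. With that packaging, the second half of Axiom~\ref{axQ:BM-67} becomes compatibility with $\dd_3$: the composite $\sum_i\norm{f_i}$ is forced to be $\eta(f/g)$, which is your block comparison.

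Two index slips should be fixed:
\begin{enumerate}
\item The list $\dd_2(g)$ has length $\norm R$ because of the strict identity $\dd_0\dd_2=\dd_1\dd_0$. It is not because of ``$\dd_1\dd_2=\dd_1\dd_1$'', which is precisely the non-strict cell $\beta_0$ your construction relies on.
\item Axiom~\ref{axQ:BM-fibres-of-identities} is the strict identity $\dd_2\ss_0=\ss_0\dd_1$ on $\X_1$, as in \eqref{dsS}, not $\dd_2\ss_1=\ss_1\dd_1$.
\end{enumerate}
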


\begin{proof}
  {\em Overview of the proof.} Since we aim to do the $\nop$-construction backwards, the idea is of course that the
  top face operators should define a fibre functor structure on $\catO$, but before we
  can verify the axioms, we need a cardinality functor, and this is not obvious,
  because it is not directly visible in $\X$. Surprisingly it can be extracted
  nevertheless by a rather intricate procedure: the functor's value on morphisms will
  be assembled by pita factorisation in $\finset$, with the $\eta$-part coming from
  the top degeneracy operators and the $\pi$-part coming from the $\beta$-cells.
  Functoriality of this subtle patchwork will be a consequence of coherence of the
  $\beta$-cell and the shuffle condition. Only once we have constructed the
  cardinality functor can we finally verify the axioms for an operadic category, which
  in the end will amount to simplicial identities.
    
  So our first task is to construct a functor $|-|:\catO\to \Fin$, which will be the
  cardinality functor for $\catO$, and show it is unique. To this end we construct a
  strict morphism of pseudo-simplicial symmetric monoidal groupoids $\cc: \X
  \to \nop(\Fin)$ which is levelwise free, and then the cardinality functor will
  appear as a restriction of $\cc$ to the generators of $\SSS\catO$. We then observe
  that our conditions determine this functor uniquely.
		
  {\em Definition of $\cc_0: \SSS\catO_{-1}\to \SSS\Fin_{-1}$.} Since
  $\cc_0$ has to be free, it amounts to giving $\catO_{-1} \to \Fin_{-1} = 1$,
  so uniqueness of $\cc_0$ is clear. Note that $\cc_0$ is a 
  symmetric monoidal functor. Its effect on a morphism is $\X_0$ is to take 
  underlying bijection (of the sets that index the lists (objects in $\X_0 = 
  \SSS\catO_{-1}$)).
  
  {\em Definition of $\cc_1: \SSS\catO_0\to \SSS\Fin_0$.} It is determined by
  a map $\catO_0 \to \Fin_0$, and this map has to send a monoidal 
  generator $T \in \X_1$ (that is, an object in $\catO$) to the
  finite set $m$ that indexes the list $d_1(T) \in \X_0 = \SSS\catO_{-1}$. In detail,
  we require commutativity of
  \begin{equation}\label{comp-with-d1}
  \begin{tikzcd}
  \SSS\catO_{-1} \ar[d, "\cc_0"'] & \catO_0 \ar[l, "d_1"'] \ar[d, "\cc_1"] \\
  \SSS\Fin_{-1} & \Fin_0 \ar[l, "\dd_1"],
  \end{tikzcd}
  \end{equation}
  but the finite set that indexes $d_1(T)$ is the same finite set as indexes
  $\cc_0 d_1(T)$, and this set has to be the set of fibres of $\id_m: m \to m$,
  which is $m$ itself. So if we want to satisfy \eqref{comp-with-d1}, this is the only possible 
  choice for $\cc_1$. Now we should check that this $\cc_1$ is also compatible with 
  $s_0$ and $d_0$. For $s_0$, the requirement is commutativity of
  \begin{equation}\label{comp-with-s0}
  \begin{tikzcd}
  \SSS\catO_{-1} \ar[d, "\cc_0"'] \ar[r, "s_0"] & \catO_0  \ar[d, "\cc_1"] \\
  \SSS\Fin_{-1} \ar[r, "\ss_0"'] & \Fin_0  .
  \end{tikzcd}
  \end{equation}
  But this is fine: for $U\in \catO_{-1}$ we have $\ss_0 \cc_0(U) = \ss_0(1) = 1 \in
  \Fin_0$, and going the other way around the square we get that $\cc_1(s_0(U))$ is the
  set that indexes the list $d_1 s_0(U)$. But that list is of length $1$ because $s_0$
  is a strict section to $d_1$. Compatibility with $d_0$ is automatic since $d_0$ is a
  free map.
  
  {\em Definition of $\cc_2:  \SSS\catO_1\to \SSS\Fin_1$.}
  To each morphism $f:T\to S$, seen as a monoidal generator in $\X_2$, we need to
  assign a morphism $\cc_2(f)$ in $\finset$. Since we need compatibility with the 
  face operators $d_0$ and $d_1$, we need the domain and codomain of $\cc_2(f)$ to
  be
  $$
  \cc_1(T) \stackrel{\cc_2(f)}\longrightarrow \cc_1(S) .
  $$
  Furthermore,
  compatibility with $d_2$ tells us that we need this map to have fibres
  $$
  (\cc_1(T_1),\ldots,\cc_1(T_j)) =: (m_1,\ldots,m_n),
  $$
  where as usual we write $(T_1,\ldots,T_n) := d_2(f)$, with $n:=\cc_1(T)$
  and $m_j := \cc_1(T_j)$.
  Knowing the fibres of a map in $\finset$ is precisely to know its $\eta$-factor,
  so we have
  $$
  \eta( \cc_2(f)) = \sum_{j=1}^n (m_j{\to}1) .
  $$
  By (strict) uniqueness of pita factorisation in $\finset$, the map $\cc_2(f)$
  is thus determined if just we can provide also the $\pi$-factor. But this is
  determined by the requirement that the simplicial map $\cc$ should preserve 
  $\beta$-cells: we need
  $$
  \cc_0(\beta_0(f)) = \beta_0( \cc_2(f)) ,
  $$
  and in $\nop\finset$ we know that $\beta_0$ is precisely the $\pi$-factor.
  We are thus forced to take
  $$
  \pi(\cc_2(f)) = \beta_0( \cc_2(f)) = \cc_0(\beta_0(f)) .
  $$
  That this choice actually works is because of the shuffle condition on $\X$:
  the underlying bijection of $\beta_0(f)$ is the identity after restriction to 
  each $m_j$, which is to say that it is fibrewise order-preserving.
  
  Finally we can define
  \begin{align*}
  \cc_2(f) := \eta(\cc_2(f)) \circ \pi(\cc_2(f)) &= \big( \sum_{j=1}^n (m_j{\to}1) 
  \big) \circ \cc_0(\beta_0(f)) \\
  &= (\sum s_1 \cc_1(d_2(f))) \circ 
  \cc_0(\beta_0(f)) .
  \end{align*}
  (In the last expression, the sum-sign without indices denotes the ordinal sum of a 
  list of maps in $\finset$.) The choices involved in this definition ensure already
  compatibility with $d_0$, $d_1$, $d_2$, and $\beta$, and it was the unique
  choice achieving this compatibility. It remains to check compatibility with 
  $s_0$ and $s_1$, corresponding to the cases $\id:T\to T$ and $T\to U$ ($U$ the 
  chosen local terminal of the component of $T$). In both cases we have $\beta_0 = 
  \id$ (by Equations~\eqref{beta-eq-bis} and \eqref{coherence-new-bis}), while the $\eta$-parts are $\sum(1{\to}1)$ and
  $m{\to}1$, respectively, as required.

  {\em Definition of $\cc_3:  \SSS\catO_2\to \SSS\Fin_2$.}
  A generator for $\X_3$ is a commutative triangle in $\catO$, say
  \[
  \begin{tikzcd}[column sep={30pt,between origins}]
  T \ar[rr, "f"] \ar[rd, "h"'] & \ar[d, phantom, pos=0.4, "\sigma" description] & S 
  \ar[ld, "g"]  \\
   & R &
  \end{tikzcd}
  \]
  Then as in Example~\ref{ex:X3} we have
  $
  d_0(\sigma) = g$, $d_1(\sigma) = h$,  $d_2(\sigma)= f$, and
  $d_3(\sigma) =: (f_1,\ldots,f_k)$,
  where $k:=\cc_1(R)$, because the indexing set of $d_3(\sigma)$ is the same as the
  indexing set of $d_0 d_0 d_3(\sigma)$ (since $d_0 d_0$ is strictly monoidal), which
  is equal to $d_1(d_0 d_0(\sigma)) = d_1(R)$. We also put $(T_1,\ldots,T_k) := d_2
  (h) = d_2\dd_1(\sigma)$ and $(S_1,\ldots,S_k) := d_2 (g) = d_2 d_0(\sigma)$, and
  then it is an easy check with simplicial identities to see that $d_1(f_i)= T_i$ and
  $d_0(f_i) = S_i$, so as to have
  $f_i : T_i \to S_i$.
  
  The free map $\cc_3$ 
  must send this triangle to a commutative triangle in $\Fin$. 
  Since we 
  require the simplicial-map square
  \begin{equation}\label{012}
  \begin{tikzcd}
  \catO_1 \ar[d, "\cc_2"'] & \catO_2 \ar[d, "\cc_3"] \ar[l, "d_i"']  \\
  \Fin_1 & \Fin_2 \ar[l, "\dd_i"]
  \end{tikzcd}
  \end{equation}
  to commute for each $i=0,1,2$, we are thus forced to take $\cc_3(\sigma)$ 
  to be the triangle with sides
          \[
  \begin{tikzcd}[column sep={40pt,between origins}]
  \cc_1(T) \ar[rr, "\cc_2(f)"] \ar[rd, "\cc_2(h)"'] & & \cc_1(S) 
  \ar[ld, "\cc_2(g)"]  \\
   & \cc_1(R) &
  \end{tikzcd} \qquad =: \qquad
      \begin{tikzcd}[column sep={40pt,between origins}]
  m \ar[rr, "\cc_2(f)"] \ar[rd, "\cc_2(h)"'] & & n
  \ar[ld, "\cc_2(g)"]  \\
   & k &
  \end{tikzcd}
  \]
  (the second picture of the triangle is only to fix the usage of the letter $m$, $n$, 
  $k$). But we need to check that this triangle commutes! To this end 
  we must use the definition of $\cc_2$ already made, so the triangle we want to 
  commute is the outline of
  \[
  \begin{tikzcd}[column sep={13em,between origins},row sep={7.5em,between origins}]
  m \ar[d, "\beta_0(h)"'] \ar[r, "\beta_0(f)"] & {\displaystyle \sum_{j=1}^n m_j} \ar[d, dotted,
  "d_1(\beta_1)_\sigma
  "] \ar[r, "\sum_{j=1}^n (m_j{\to}1)"] & n
  \ar[d, "\beta_0(g)"]  \\
  {\displaystyle \sum_{i=1}^k m_i }\ar[rd, bend right=20, "\sum_{i=1}^k (m_i{\to}1)"']\ar[r, dotted,
  "\beta_0({(f_1,\ldots,f_k)})", "\sum_{i=1}^k(\beta_0(f_i))"'] & {\displaystyle \sum_{i=1}^k \sum_{j=1}^{n_i}}
  m_{ij} \ar[r, dotted, "\sum_{i=1}^k \sum_{j=1}^{n_i} 
  (m_{ij}{\to}1)"] & {\displaystyle \sum_{i=1}^k n_i} \ar[ld, bend left=20, 
  "\sum_{i=1}^k(n_i{\to}1)"] 
  \\
  & k &
  \end{tikzcd}
  \]
  and our experience with beta cells for $\finset$ (see
  diagram~\eqref{big-red-triangle-F}) tells us how to fill this triangle, with the
  dotted maps. Now the upper-left square is precisely coherence of $\beta$ 
  (Equation~\eqref{betacoherence1}), and the
  upper-right square is the shuffle condition (or more precisely its consequence spelled
  out in~\ref{shuffle-consequence}). The bottom triangle clearly commutes,
  because both ways around it are the $k$-indexed sum of maps to the terminal.

  Having established commutativity of this triangle, we now have a valid candidate for
  $\cc_3(\sigma)$, and to meet the requirements \eqref{012} there is only one possible 
  choice, and we thus know that the assignment is compatible with $d_0$, $d_1$, and 
  $d_2$. Now we need to check compatibility with $d_3$. We have
  $$
  \cc_2(d_3(\sigma)) = (\cc_2(f_1),\ldots,\cc_2(f_k))
  $$
  which is precisely the horizontal dotted composite in the diagram. But this map
  mediates between the pita factorisation of $\cc_2(h)$ and the pita factorisation of
  $\cc_2(g)$, so it must be $\eta(f/g)$ (see~\eqref{func-pita}). But $\eta(f/g)$
  is precisely the result of $d_3$ on a commutative triangle in $\finset$ (cf.~our
  preliminary analysis of $\finset$ (see page \pageref{eta-as-fibre-maps})), so
  compatibility with $d_3$ is verified. Now that we know that $\cc_0$, $\cc_1$,
  $\cc_2$, $\cc_3$ are compatible with all face operators up to degree $3$ and also with
  $\beta_0$, we  get compatibility also with $\beta_1$ automatically, 
  since $\beta$ is determined
  by $\beta_0$ (by Equation~\eqref{betacoherence-new}) (see also
  \cite[Lemma~8.3]{Batanin-Kock-Weber:2512.22794})). Finally, compatibility with the
  degeneracy operators is easy, as it is just about the cases $g=\id$, $f=\id$, and
  $R$ chosen local terminal.

  {\em Higher $\cc_r$.} These cases are completely analogous, only with more 
  complicated notation, but it is not necessary to do them, because with $\cc_2$ and 
  $\cc_3$ we have already established a functor.

  So now we have constructed a functor 
  $$
  \norm{ \thg } : \catO \to \finset .
  $$
  (Its uniqueness so far is as a simplicial map. Potentially
  there were more conditions than necessary for the functor, but when we come to 
  checking the operadic-category axioms, these extra conditions will reappear.)

  {\em Fibre functors.}
  Now that we have a cardinality functor to refer to, we can finally define the fibre 
  functors $\fib_{S,j}: \catO/S \to \catO$: given $f: T \to S$,  
   for $j\in n := \norm{S}$ we define the $j$-th fibre to be the $j$-th
  entry in the list $d_2(f)$.
  Since by construction $\norm{S} = n$ is the set indexing the list $d_2(f)$, we have 
  just the right number of fibres. Since we are going to check that it is a functor and
  that it satisfies the axioms for fibre functors, it is worth giving a more formal 
  description of $\fib_{S,j}$.

  The key point is that all simplicial identities involving top face operators 
  against 
  other simplicial operators are strict. This means that the top face operators 
  constitute a {\em strict} natural transformation (the counit of upper decalage)
  $$
  d_{\text{top}} : \uuu\upperstar(\iii\kkk)\upperstar\uuu\upperstar \X \to \uuu\upperstar\X ,
  $$
  of $\tEM$-presheaves, which amounts to
  $$
  d_{\text{top}} : \SSS\DDD \catO \to \SSS\catO .
  $$
  We cannot yet cancel the $\SSS$ because although $d_{\text{top}}$ is 
  symmetric monoidal, it is not free.
  But we can precompose with the $\SSS$-unit to get 
  $$
  \DDD\catO \to \SSS\DDD \catO \to \SSS\catO.
  $$
  In degree $-1$, this is
  $$
  \catO_0 \to \SSS\catO_0 \to \SSS\catO_{-1}
  $$
  and now we can pick out $S$ by means of $1 \to \catO_0$
  and pull back along this map to get the slice category over $S$:
  $$
  1 \to \catO/S \to \SSS\catO/S \to \SSS\DDD\catO \to \SSS\catO.
  $$
  Since we have fixed $S$, when we land in $\SSS\catO$ we actually land in
  $\catO^n$ (where $n=\norm S$), and from here we can project onto the $j$-th factor to get finally
  $$
  \catO/S \to \SSS\catO/S \to \SSS\DDD\catO \to \SSS\catO \stackrel{\text{proj}}\longrightarrow \catO
  $$
  which is our fibre map
  $$
  \fib_{S,j} : \catO/S \to \catO
  $$

  Having constructed it formally like this, it is clear that $\fib_{S,j}$ is a
  functor (as required in \ref{data:operadic-Q3}).
  
  {\em Checking the axioms.}
  The formal construction of $\fib$ makes is clear that it preserves local terminal
  objects, which is
  Axiom~\ref{axQ:BM-fibres-of-identities}. Note that
  to say that $(\id_S)^{-1}(j)$ is local terminal is the strict simplicial identity
  \begin{equation}\label{dsS}
    \begin{tikzcd}
    \X_0 \ar[d, "s_0"'] & \X_1 \ar[l, "d_1"'] \ar[d, "s_0"]  \\
    \X_1 & \X_2  .\ar[l, "d_2"]
    \end{tikzcd}
  \end{equation}

  Axiom~\ref{axQ:BM-abs(lt)} is already checked by construction,
  because the cardinality functor was constructed to be compatible with the top 
  degeneracy operators (that is, it was constructed as a natural transformation of 
  $\tEM$-presheaves, not just simplicial sets).

  For Axiom~\ref{axQ:BM-67}, we need to check that the square
  \eqref{diag:fibres-and-cardinalities} of functors
  \[
  \begin{tikzcd}[column sep={6em,between origins}]
  \catO \ar[d, "\norm{ \thg }"'] & \catO_{\smash{/S}} \ar[l, "\fib_{S,j}"'] \ar[d, 
  "\norm{ \thg }_{/S}"]  \\
  \finset & \finset_{/\norm{S}} \ar[l, "\fib_{\norm S,j}"]
  \end{tikzcd}
  \]
  commutes. But this is precisely the fact that $\cc$ was constructed not only to be a 
  functor but more precisely also to commute with top face operators. That the square 
  commutes amounts to the simplicial-map identities
  \begin{equation}\label{check-slice}
  \begin{tikzcd}
      \SSS\catO_0 \ar[d, "\cc_1"'] & \catO_1 \ar[d, "\cc_2"] \ar[l, "d_2"']  \\
      \SSS\Fin_0 & \Fin_1 \ar[l, "\dd_2"]
      \end{tikzcd}
  \qquad
      \begin{tikzcd}
      \SSS\catO_1 \ar[d, "\cc_2"'] & \catO_2 \ar[d, "\cc_3"] \ar[l, "d_3"']  \\
      \SSS\Fin_1 & \Fin_2 \ar[l, "\dd_3"]
      \end{tikzcd}
  \end{equation}
  Note at this point that when we observed uniqueness of the cardinality functor,
  it was really uniqueness of a cardinality functor compatible with top degeneracy 
  operators and top face operators, so the uniqueness is on cardinality functors 
  satisfying Axioms~\ref{axQ:BM-abs(lt)} and~\ref{axQ:BM-67}.

  Axiom~\ref{axQ:BM-fibres-of-tau-maps} states that for $g: S \to U$ (the morphism to a chosen 
  local terminal) we have $g^{-1}(1) = S$, and for $T \stackrel{f} \to S \stackrel g 
  \to U$ we have $f_1 = f$. But these are just the fact that the top degeneracy 
  operators are {\em strict} sections to the top face operators. Specifically the two 
  conditions are the simplicial identities
  \begin{equation}\label{check-sd}
  \begin{tikzcd}[row sep={3.5ex,between origins}]
  \X_1 \ar[rd, "s_1"] \ar[dd, "\id"'] &   \\
   & \X_2 \ar[ld, "d_2"] \\
   \X_1 &
  \end{tikzcd}
  \qquad
  \begin{tikzcd}[row sep={3.5ex,between origins}]
  \X_2 \ar[rd, "s_2"] \ar[dd, "\id"'] &   \\
   & \X_3 \ar[ld, "d_3"] \\
   \X_2 &
  \end{tikzcd}
  \end{equation}

  Finally, Axiom~\ref{axQ:BM-fibres-of-local-fibres} is the `fibres of fibres' axiom.
  The first part states that for $T \stackrel f \to S \stackrel g \to R$ and with 
  $i\in\norm R$ and $j \in \norm{g}^{-1}(i)$ we have $(f_i)^{-1}(j) = f^{-1}(\epsilon 
  j)$. Turning around $\epsilon$ it is an instance of $\beta$: 
  the $j$-th fibre of of $f$ is equal to the $\beta_1(j)$-th fibre of $f_i$ (where $i = 
  g(j)$).
  But this is precisely the coherence square below left
  \begin{equation}\label{fib-fib}
  \begin{tikzcd}
  \X_{3} \ar[d, "d_{2}"'] \ar[r, "d_{3}"] & \X_{2} \ar[d, "d_{2}"]  \\
  \X_{2} \ar[r, "d_{2}"'] \ar[ru, Rightarrow, shorten <=14pt, shorten 
  >=14pt, "\beta_1"]& \X_1 \,.
  \end{tikzcd}
  \qquad
  \begin{tikzcd}
  \X_{4} \ar[d, "d_{3}"'] \ar[r, "d_{4}"] & \X_{3} \ar[d, "d_{3}"]  \\
  \X_{3} \ar[r, "d_{3}"'] \ar[ru, Rightarrow, shorten <=14pt, shorten 
  >=14pt, "\beta_2"]& \X_2 \,.
  \end{tikzcd}
  \end{equation}
  The second square is then precisely the second part of 
  Axiom~\ref{axQ:BM-fibres-of-local-fibres}.
  (Note that the mere existence of these beta cells is enough for the sake of verifying 
  Axiom~\ref{axQ:BM-fibres-of-local-fibres}. It is not needed here that the beta cells 
  are coherent. (Coherence was needed only to be able to reconstruct the cardinality functor.
  And conversely, if the cardinality functor is given, then coherence follows, from the 
  fact that it is a discrete fibration, as we argued in the proof of the Coherence 
  Theorem~\ref{NOP}).)

  We have finished the construction of the operadic-category structure on $\catO$,
  and established its uniqueness.
\end{proof}

\begin{blanko}{Punch line.}
  Each one of the axioms for operadic categories is now a simplicial identity!
  (Cf.~Equations~\eqref{dsS}--\eqref{fib-fib} in the proof.)
\end{blanko}

\begin{remark}
  The idea that the cardinality functor of an operadic category should somehow
  be implied data was explored also by Lack~\cite{Lack:1610.06282}, who
  axiomatised a notion of operadic category with trivial objects instead of
  chosen local terminals in a way that it was not necessary to mention any
  cardinality functor. But note that his notion is weaker than the original
  notion of operadic category. Theorem~\ref{OE} (and notably its proof) shows
  that the cardinality functor can be suppressed in an {\em isomorphic}
  axiomatisation (but that this data is instead encoded as coherence data).
\end{remark}

In order to stress that it is really exactly the notion of operadic categories we
are encoding, we have stated the theorem using the slightly awkward condition of 
an {\em equality} $\uuu\upperstar\X =
\SSS\nerexs\catO$. The theorem can be weakened by demanding only an isomorphism of
operadic categories here. This
amounts to characterising the essential image of $\nop$ instead of the strict image.
We state that as a corollary:

\begin{corollary}
  Let $\catO$ be a category with chosen local terminals. Let $\X$ be a
  top-pseudo-simplicial symmetric monoidal groupoid satisfying the shuffle condition.
  Suppose we are given a levelwise free isomorphism of split-augmented simplicial
  groupoids
  $$
  f:\SSS\nerexs(\catO)\simeq \uuu\upperstar(\X).
  $$ 
  Then there is a unique operadic category structure on $\catO$ and a levelwise free
  isomorphism of pseudo-simplicial symmetric monoidal groupoids
  $$
  \bar{f}:\nop\catO\to \X
  $$ such that $\uuu\upperstar(\bar{f}) = f$.
\end{corollary}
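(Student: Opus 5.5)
The plan is to reduce to Theorem~\ref{OE} by transporting structure along $f$. Since $f$ is a levelwise free isomorphism of split-augmented simplicial groupoids, it is induced by a levelwise bijection of generating sets. I would use it to replace $\X$ by an isomorphic top-pseudo-simplicial groupoid $\X'$ whose underlying $\uuu\upperstar\X'$ is literally $\SSS\nerexs(\catO)$. In each degree set $\X'_r := \SSS\catO_{r-1}$. All operators except the top face operators are already those of $\SSS\nerexs(\catO)$. For the top face operators take $d'_{r} := f_{r-1}^{-1}\circ d_r\circ f_r$, and for the coherence cells take $\beta'_r := f_r^{-1}\beta_r f_{r+2}$ (whiskering by the strict monoidal isomorphisms $f$). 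Then $f$ upgrades tautologically to a strict isomorphism $\X'\cong\X$ of pre-simplicial groupoids. This isomorphism preserves all $2$-cells on the nose, so $\X'$ is again top-pseudo and coherent, because the coherence equations of Jardine are transported verbatim.

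The one point requiring care is the shuffle condition for $\X'$. Because $f$ is levelwise free, each $f_r$ is $\SSS$ of a bijection of generating sets. Hence $f_r$ sends a permutation of a list to the same underlying permutation of the image list. The underlying bijection of $\beta'_r(\sigma)$ is therefore that of $\beta_r(f\sigma)$, so it is a shuffle exactly when the latter is. This is where freeness of $f$ is used essentially. A non-free symmetric monoidal isomorphism could reorder entries inside the image of a generator and destroy the shuffle property (indeed the condition would not even make sense).

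With these facts in hand, Theorem~\ref{OE} applies to $\X'$. It yields a unique operadic-category structure on $\catO$ with $\X' = \nop\catO$, and we put $\bar f := $ the isomorphism $\X'\cong\X$, which satisfies $\uuu\upperstar\bar f = f$ by construction.

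For uniqueness, suppose another structure and a levelwise free isomorphism $\bar g:\nop\catO\to\X$ satisfy $\uuu\upperstar\bar g = f$. Then $\bar g$ and $\bar f$ agree on all operators except possibly $d_{\mathrm{top}}$ and $\beta$. Strict compatibility of $\bar g$ with the top face operators and with the $\beta$-cells forces the top faces and $\beta$-cells of this $\nop\catO$ to be the transported ones $d'$, $\beta'$. Hence the nerve equals $\X'$, and the uniqueness clause of Theorem~\ref{OE} gives the same operadic structure and $\bar g = \bar f$.

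I expect the main obstacle to be only bookkeeping: checking that transport along a free isomorphism preserves the top-pseudo and shuffle properties.
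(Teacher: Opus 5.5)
Your proof is correct and is essentially the paper's argument: both reduce to Theorem~\ref{OE} by transporting structure along the isomorphism $f$. The only difference is the direction of transport. The paper keeps $\X$ and replaces $\catO$ by the isomorphic category $\catO'$ read off from the generators of $\X$, then transports the resulting operadic structure back along $\phi\colon\catO\cong\catO'$, so the shuffle and coherence hypotheses are used verbatim. You keep $\catO$ and transport $\X$ to $\X'$; this costs you your (correct) check that freeness of $f$ preserves the shuffle condition, but in exchange you spell out the uniqueness clause that the paper leaves implicit.
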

\begin{proof}
  Consider the image of the composite
  $$
  \nerexs(\catO)\to\SSS\nerexs(\catO)\xrightarrow{f} \uuu\upperstar (\X)
  $$
  where  the first map is the unit of the monad $\SSS$. Since $f$ is an
  isomorphism, this image is a split-augmented nerve of a category 
  $\catO'$ with
  chosen local terminals, together  with an isomorphism
  $\catO\xrightarrow{\phi}\catO'$ giving a factorisation of $f$ as
  $$
  \SSS\nerexs(\catO)\xrightarrow{\SSS\phi}\SSS\nerexs(\catO')\xrightarrow{\id} \uuu\upperstar (\X).
  $$     
  Theorem~\ref{OE} provides a unique operadic-category structure on $\catO'$,
  which we can now transport back along the isomorphism $\phi$
  (recall that the operadic-category
  structure cannot be transferred along equivalences of categories, but obviously can
  be transferred along isomorphisms).
\end{proof}

We now give a more functorial account of the results, aiming at describing the 
category $\OpCat$ as a strict pullback of categories. 
Recall that $\kat{TopPs}$
denotes the category whose objects are top-pseudo-simplicial symmetric monoidal 
groupoids, meaning pseudo-simplicial symmetric monoidal 
groupoids whose only non-trivial $2$-cells are the $\beta$-cells. 
We define
$\kat{TopPs}^{\operatorname{fr+sh}} \subset \kat{TopPs}$ to be the subcategory consisting
of top-pseudo-simplicial symmetric monoidal groupoids that are levelwise free,
and for which the $\beta$-cells are shuffle permutations as in \ref{shuffle}.
In both cases, the morphisms are symmetric strict monoidal 
simplicial maps (but in the following pullback the morphisms are actually forced to be
levelwise free, since those in the image of $\SSS$ are levelwise free).

\begin{theorem}\label{pbk-thm}
  The square
  \begin{equation}\label{Nlt-diagram}
\begin{tikzcd}
  \OpCat \ar[d, "\operatorname{forget}"'] \ar[r, "\nop"]& 
  \kat{TopPs}^{\operatorname{fr+sh}} \ar[d, "\uuu\upperstar"]\\
\Catlt \ar[r, "\SSS\nerexs"'] &
{[(\tEM)\op,\SMGp]}
\end{tikzcd}
\end{equation}
is a strict pullback of categories.
\end{theorem}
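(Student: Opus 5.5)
We first check that the square commutes strictly, and then that the canonical comparison functor $\Phi:\OpCat\to\mathcal{P}$ into the strict pullback $\mathcal{P}$ is an isomorphism of categories. Commutativity is built into the construction of Section~\ref{sectionoperadicnerve}: $\nop(\catO)$ was obtained by adding top face operators and $\beta$-cells to $\SSS\nerexs(\catO)$, so $\uuu\upperstar\nop(\catO)=\SSS\nerexs(\catO)$. On morphisms, Lemma~\ref{operfun} gives $\uuu\upperstar\nop(p)=\SSS\nerexs(p)$ for any strict operadic functor $p$. We must also see that $\nop$ lands in $\kat{TopPs}^{\operatorname{fr+sh}}$. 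This uses Theorem~\ref{NOP}, which gives coherence and shows that the only nontrivial cells are the $\beta$-cells; the shuffle lemma; and levelwise freeness, which holds because every level is $\SSS$ of a set.

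An object of $\mathcal{P}$ is a pair $(\X,\CC)$ with $\uuu\upperstar\X=\SSS\nerexs\CC$. Bijectivity of $\Phi$ on objects is exactly Theorem~\ref{OE}. Since $\X$ is top-pseudo and shuffle-like, there is a unique operadic-category structure on $\CC$ with $\nop(\CC)=\X$. Conversely, two operadic structures on the same $\CC$ with equal nerves coincide, by the uniqueness clause of the same theorem.

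For morphisms, take a pair $(F,f)$ with $F:\nop\catP\to\nop\catO$ a strict simplicial map and $f:\catP\to\catO$ a functor preserving chosen local terminals, such that $\uuu\upperstar F=\SSS\nerexs(f)$. We must show that $f$ is strict operadic and that $F=\nop(f)$; faithfulness of $\Phi$ is then clear, since $\Phi$ remembers $f$. The key observation is that $F$ is determined by $\uuu\upperstar F$, because $\uuu\upperstar$ only forgets the top face operators, not any groupoid data. Hence $F=\SSS\nerexs(f)$ levelwise, and the content of the statement is that compatibility of $F$ with $\dd_{\mathrm{top}}$ and with $\beta$ forces $f$ to be operadic. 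The verification proceeds as follows.
\begin{enumerate}
\item Compatibility with $\dd_2$ on generators $g\in\catP_1$ says that $\SSS f$ sends the list of fibres of $g$ to the list of fibres of $f(g)$. Because $\SSS f$ is free, it preserves the indexing set, so $\norm{S}=\norm{fS}$ and $f(g^{-1}(j))=(fg)^{-1}(j)$.
\item Compatibility with $\dd_3$ gives preservation of fibre maps, $f(h_i)=(fh)_i$.
\item For cardinalities on morphisms, the construction in the proof of Theorem~\ref{OE} recovers $\norm{g}$ as $\eta\circ\pi$. Here the $\eta$-part is read off from lengths of the fibre lists, and the $\pi$-part is the underlying permutation of $\beta_0(g)$. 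Since $F$ strictly preserves $\beta$-cells and is free, $\beta_0(fg)=\SSS f(\beta_0(g))$ has the same underlying permutation. Hence $\norm{fg}=\norm{g}$, and $f$ commutes strictly with the cardinality functors.
\end{enumerate}
Thus $f$ is a strict operadic functor, and $\nop(f)=F$ by construction.

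The main obstacle is this last step, recovering the equality of cardinality functors on morphisms. Unlike the other conditions, it is not a visible simplicial equation; it is encoded only in the $\beta$-cells. The argument therefore depends on morphisms in $\kat{TopPs}$ preserving coherence cells strictly, and on the fact that a free symmetric strict monoidal functor preserves underlying permutations. We also need to check that the identifications of $\nop$ are strict equalities, not mere isomorphisms, so that the pullback is strict. Functoriality of $\Phi$ and of its inverse is then routine.
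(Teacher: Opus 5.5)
Your proposal is correct and follows essentially the same route as the paper: bijectivity on objects comes from Theorem~\ref{OE} (existence and uniqueness of the operadic structure). On morphisms, levelwise freeness together with strict compatibility with the top face operators and the $\beta$-cells forces $f$ to be a strict operadic functor with $F=\nop(f)$. The only difference is that you spell out the cardinality-on-morphisms step (the $\eta$-part from fibre lengths, the $\pi$-part from $\beta_0$), which the paper simply attributes to the construction in Section~\ref{sec:as-ssets}.
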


The theorem gives a characterisation of operadic categories:
Explicitly, the objects of this pullback are pairs $(\X,\CC)$
where $\X$ is a top-pseudo-simplicial symmetric monoidal groupoid, that is levelwise 
free and satisfies 
the shuffle condition, 
$\CC$ is a small category with chosen
local terminals, and we have the strict equality $\uuu\upperstar \X = \SSS
\ltnerve \CC$.
The morphisms from $(\X,\CC)$ to $(\X',\CC')$ are given by pairs $(F,f)$ where $F: \X \to
\X'$ is a strict simplicial map, and $f: \CC\to\CC'$ is a functor, required to be
compatible in the sense that this diagram commutes:
	\[
	\begin{tikzcd}[column sep ={8mm,between origins}]
	\uuu\upperstar \X \ar[d, "\uuu\upperstar(F)"'] & = & \SSS \ltnerve \CC \ar[d, 
	"\SSS\ltnerve(f)"] \\
	 \uuu\upperstar \X' & = & \SSS \ltnerve \CC'  .
	\end{tikzcd}
	\]

\begin{proof}
  Having already done most of the work, only a few easy checks remain.
  Commutativity of the square gives us the functor
  $$
  \OpCat \longrightarrow \Catlt \times_{\PrSh(\tEM)}  
  \kat{TopPs}^{\operatorname{fr+sh}} .
  $$
  Theorem~\ref{NOP} says that this functor is bijective on objects: an operadic
  category structure on a category-with-chosen-local-terminals $\CC$ can be constructed
  uniquely from a top-pseudo-simplicial symmetric monoidal groupoid $\X :
  \simplexcategoryop\to\SMGp$ with $\uuu\upperstar X = \SSS \nerexs \CC$ (in
  particular $\X$ is thus levelwise free) provided $\X$ satisfies the shuffle
  condition. Second we need to perform the same check on morphisms in the category,
  meaning that strict operadic functors can be reconstructed. In fact we have a
  functor in the opposite direction: given a strict simplicial map $F:\X \to \X'$
  that is symmetric monoidal and levelwise free, the restriction to monoidal
  generators provides a functor on the underlying operadic categories. This functor
  preserves chosen local terminals because $F$ is strictly compatible with the top
  degeneracy operators, and the functor preserves fibre structure because $F$ is
  strictly compatible with the top face operators. That the functor also preserves
  cardinality follows from the construction in Section~\ref{sec:as-ssets}.
  Having now made the functors explicit, it is clear that they are inverse to each
  other (strictly).
\end{proof}

\section{Operadic categories as $\DDD$-bialgebras}
\label{sec:D-bialg}

In this section we digress from the pure simplicial viewpoint to give another
interpretation of the axioms, of the flavour {\em an operadic-category structure on a
category is a sort of $\DDD$-bialgebra structure in the Kleisli category of $\SSS$}
(so the task is to say what precisely this means). This serves two purposes: firstly
it pinpoints quite precisely the balance between the strict and the pseudo witnessed
so far, and secondly it more closely relates back to
Garner--Kock--Weber~\cite{Garner-Kock-Weber:1812.01750}.

From \cite{Garner-Kock-Weber:1812.01750} we know that the category $\Catlt$ is
equivalent to the category of (strict) $\DDD$-coalgebras in $\Cat$. 
The underlying simplicial
sets are ($\Set$-valued) $\tKLEISLI$-presheaves rather than $\tEM$-presheaves.
Diagram~\eqref{Nlt-diagram} refines to
\[
\begin{tikzcd}
  \OpCat \ar[d, "\operatorname{forget}"'] & \\
\Catlt 
\ar[d, "\simeq" , "\operatorname{forget}"'] \ar[r, "\nerexs"] & \PrSh(\tEM) \ar[d, "\iii\upperstar"]  \\
\DDD\kat{-Coalg} \ar[d, "\operatorname{forget}"']\ar[r] & \PrSh(\tKLEISLI) \ar[d, "\kkk\upperstar"]  \\
\Cat \ar[r, "\strictnerve"'] & \PrSh(\simplexcategory)
\end{tikzcd}
\]

For formal reasons $\DDD$ induces a monad on $\DDD\kat{-Coalg}$, denoted $\widetilde\DDD$.
The unit for 
the monad $\widetilde\DDD$ at a $\DDD$-coalgebra $\CC$ has underlying 
(split) simplicial map 
$\tau: \CC \to \DDD \CC$, and the multiplication of
the monad $\widetilde\DDD$ at a $\DDD$-coalgebra $\CC$  has underlying 
(split) simplicial map 
$\DDD\epsilon: \DDD\DDD\CC\to\DDD\CC$.
(By Garner--Kock--Weber~\cite{Garner-Kock-Weber:1812.01750}, the 
$\widetilde\DDD$-algebras are unary operadic categories.)

So far we have been talking about simplicial sets, ordinary categories, 
and $\DDD$-coalgebras in $\Cat$, but the 
notions make sense also for simplicial groupoids,
categories internal to groupoids, and $\DDD$-coalgebras in $\Cat(\Grpd)$,
and this extension is needed to accommodate the 
symmetric-monoidal-groupoid 
monad $\SSS$. 
Henceforth $\DDD\kat{-Coalg}$ denotes the category of $\DDD$-coalgebras
in $\Cat(\Grpd)$.
At first we talk about {\em strict} simplicial groupoids -- the pseudo-ness
will only come in when we say (normal) pseudo-algebra (for the monad 
$\widetilde\DDD$ on the Kleisli category for $\SSS$ on $\DDD$-coalgebras 
in $\Cat(\Grpd)$).

The starting point is the observation that the comonad $(\DDD,\delta,\epsilon)$ and
the monad $(\SSS,\mu,\eta)$ enjoy a distributive law (on
$\Cat(\Grpd)$), in fact given by a strict equality: since $\SSS$ acts on simplicial
objects $\simplexcategory\op\to\Grpd$ by postcomposition with $\SSS:\Grpd\to\Grpd$
while $\DDD$ acts by precomposition with $\ttt
:\simplexcategory\op\to\simplexcategory\op$, the strict equality of endofunctors
$$
\DDD \SSS = \SSS \DDD 
$$
follows from strict associativity of
functor composition. 
This identity of endofunctors is readily
checked to be strictly compatible with the monad and comonad 
structures, which amounts to the equations
\begin{equation}\label{eq:DT-triangles}
\xymatrix @C=0.9ex {
   \DDD\SSS \X \ar@{=}[rr]\ar[rd]_{\epsilon_{\SSS \X}} && \SSS\DDD \X 
   \ar[ld]^{\SSS\epsilon_ \X}\\
   & \SSS \X
}
\qquad\qquad
\xymatrix @C=0.9ex {
& \DDD \X \ar[ld]_{\DDD\eta_{ \X}} \ar[rd]^{\eta_{\DDD \X}} & \\
\DDD\SSS \X \ar@{=}[rr] && \SSS\DDD \X .
}
\end{equation}

\begin{equation}\label{eq:DT-pentagons}
\xymatrix @C=0.6ex {
   &\DDD\SSS \X \ar@{=}[rr]\ar[ld]_{\delta_{\SSS \X}} && \SSS\DDD \X 
   \ar[rd]^{\SSS\delta_ \X}\\
   \DDD\DDD\SSS \X\ar@{=}[rr] &&\DDD\SSS\DDD  \X \ar@{=}[rr] && \SSS\DDD\DDD \X
}
\qquad\quad
\xymatrix @C=0.6ex {
 \DDD\SSS\SSS \X \ar[rd]_{\DDD \mu_{ \X}} \ar@{=}[rr] && \SSS\DDD\SSS \X \ar@{=}[rr] 
 && \SSS\SSS\DDD \X\ar[ld]^{\mu_{\DDD \X}} & \\
&\DDD\SSS \X \ar@{=}[rr] && \SSS\DDD \X .
}
\end{equation}

The distributive law ensures that the monad $\SSS$ lifts to a monad on
$\DDD\kat{-Coalg}$, denoted $\widetilde\SSS$. The distributive law also
lifts, strictly, so as to have a distributive law between two monads
$$
\widetilde\DDD \widetilde\SSS = \widetilde\SSS \widetilde\DDD .
$$
This in turn implies that $\widetilde\DDD$ lifts to the Kleisli category 
for $\widetilde\SSS$ (still denoted $\widetilde \DDD$).
We are interested in the restricted Kleisli category,
   namely the full subcategory spanned by the objects that are 
   $\SSS$ of (the nerve of) an ordinary category with chosen 
   local terminals. Recall that the symmetric-monoidal-groupoid
   monad $\SSS$ acts levelwise: when we write $\SSS\CC$ it is
   shorthand for $\SSS\strictnerve\CC$; this is {\em not} the
   same thing as the (nerve of the) free symmetric monoidal
   category on $\CC$.

\begin{theorem}\label{thm:bialg}
  The category $\OpCat$ of operadic categories (and strict operadic functors)
  is equivalent to the category of shuffle-type normal pseudo-$\widetilde\DDD$-algebras
  on the restricted Kleisli category for $\widetilde\SSS$.
\end{theorem}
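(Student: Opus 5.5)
The plan is to reduce Theorem~\ref{thm:bialg} to the pullback characterisation of Theorem~\ref{pbk-thm}. We show that a shuffle-type normal pseudo-$\widetilde\DDD$-algebra structure on $\SSS\CC$ in the restricted Kleisli category is the same data as a top-pseudo, levelwise free, shuffle-like $\X$ with $\uuu\upperstar\X=\SSS\nerexs\CC$. Here $\CC$ is a category with chosen local terminals, viewed as a $\DDD$-coalgebra via Proposition~\ref{prop:Dcoalg}.

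First we unpack a Kleisli morphism $\widetilde\DDD\,\widetilde\SSS\CC \to \widetilde\SSS\CC$. Using the strict distributive law $\DDD\SSS=\SSS\DDD$, it is a map of $\DDD$-coalgebras $\SSS\DDD\CC\to\SSS\SSS\CC$, and composing with $\mu$ it becomes a symmetric strict monoidal map $d_{\mathrm{top}}:\SSS\DDD\CC\to\SSS\CC$. Levelwise it consists of maps $\SSS\catO_{r}\to\SSS\catO_{r-1}$, i.e.\ candidate top face operators $d_{r+1}$. Being a morphism of $\DDD$-coalgebras means it commutes strictly with all lower faces and with all degeneracies, including the extra top ones. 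This is exactly the statement that all simplicial identities not involving two top faces are strict, which is the content of ``top-pseudo'' together with the fact that $\uuu\upperstar$ of the resulting object is $\SSS\nerexs\CC$.

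Next we translate the algebra axioms. The unit law says $d_{\mathrm{top}}\circ\tau=\id$, where $\tau$ is the unit of $\widetilde\DDD$, whose underlying map is the coalgebra structure, i.e.\ the top degeneracies. Normality requires this to hold strictly, so we obtain strict identities $d_{r+1}s_r=\id$; these correspond to the trivial $\gamma_r$ cells and to~\eqref{check-sd}. The associativity law compares $d_{\mathrm{top}}\circ\widetilde\DDD d_{\mathrm{top}}$ with $d_{\mathrm{top}}\circ\DDD\epsilon$. Evaluated levelwise, these are $d_{r+1}d_{r+2}$ and $d_{r+1}d_{r+1}$, so the invertible associativity $2$-cell is precisely a family $\beta_r$. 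Its naturality and monoidality match the requirement that $\beta$ be a monoidal transformation in $\SMGp$, and its compatibility with lower faces and degeneracies corresponds to Jardine's~\eqref{beta-eq} at lower indices and to~\eqref{coherence-new-bis}. The pseudo-algebra coherence axioms then need to be checked against the simplicial $2$-identities. The associativity pentagon/cube should become~\eqref{betacoherence1}, and the two unit-compatibility axioms, one of them~\eqref{beta-eq-bis}, say that $\beta$ is trivial on top-degenerate simplices. ``Shuffle-type'' is defined to match Definition~\ref{shuffle}, and levelwise freeness comes automatically from the objects being $\SSS\CC$.

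On morphisms, a strict morphism of pseudo-algebras in the restricted Kleisli category that strictly preserves the $2$-cells must be free, because morphisms in the pullback are forced to be levelwise free. It therefore corresponds to a pair $(F,f)$ as in Theorem~\ref{pbk-thm}, so we obtain an isomorphism with the pullback and hence, by Theorem~\ref{pbk-thm}, an equivalence with $\OpCat$. The reason we get an equivalence rather than an isomorphism is the equivalence $\Catlt\simeq\DDD\kat{-Coalg}$.

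The main obstacle is the dictionary between the pseudo-algebra coherence axioms and Jardine's three relevant families. We must show that the single associativity coherence for $\widetilde\DDD$ produces exactly the family~\eqref{beta-eq} and nothing more. I would first do this in low degrees, checking that the pseudo-algebra coherence on $\DDD\DDD\DDD\CC$ evaluated in degree $0$ is~\eqref{betacoherence1}. I would then invoke \cite[Lemma~8.3]{Batanin-Kock-Weber:2512.22794} (via~\eqref{betacoherence-new}) to propagate coherence to all degrees, and also to cover the remaining families.
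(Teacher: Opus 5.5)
Your object-level dictionary is the same unpacking the paper gives: the coalgebra-map condition gives strictness on the $\tEM$-part, normality gives strict $d_{r+1}s_r=\id$, and the associativity cell gives $\beta$. Likewise the pseudo-algebra coherences give the relevant Jardine equations, and shuffle-type matches Definition~\ref{shuffle}. Reducing to Theorem~\ref{pbk-thm} is a sensible way to organise it.

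The step that fails is the one on morphisms. A Kleisli morphism $\CC\to\CC'$ is an \emph{arbitrary} map of $\DDD$-coalgebras $F:\CC\to\SSS\CC'$. Your claim that a strict pseudo-algebra morphism ``must be free, because morphisms in the pullback are forced to be levelwise free'' is circular. In the pullback, freeness comes from the requirement $\uuu\upperstar F=\SSS\ltnerve(f)$ for some functor $f$, and a Kleisli morphism carries no such $f$.

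The claim is also false. Let $F$ send every simplex to the empty word. It is a coalgebra map, both $\mu\circ\SSS\fib'\circ\DDD F$ and $\mu\circ\SSS F\circ\fib$ are constant at the empty word, and all $2$-cells involved are identities. So $F$ is a strict morphism of shuffle-type normal pseudo-algebras between \emph{any} two operadic categories. Take for instance $\Fin$ and the one-object, one-morphism operadic category $\{*\}$ with $\norm{*}=1$: there is no strict operadic functor $\Fin\to\{*\}$, since cardinality cannot be preserved.

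So freeness cannot be derived; it has to be built into the definition of the morphisms. Take only Kleisli maps of the form $\eta\circ f$ with $f$ a coalgebra morphism; this is the paper's standing convention that strict morphisms are levelwise free. With that convention your reduction on morphisms goes through.

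Two smaller points. First, the algebra structure sits on the object $\CC$, with structure map the coalgebra map $\fib:\DDD\CC\to\SSS\CC$. Your $d_{\mathrm{top}}$ is its Kleisli extension $\mu\circ\SSS\fib$. As you phrase it, a coalgebra map $\SSS\DDD\CC\to\SSS\SSS\CC$ followed by $\mu$ need not be monoidal.

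Second, $\DDD$-coalgebras are $\tKLEISLI$-presheaves, so the algebra data only yields the top faces $d_r$ for $r\geq 2$ and the cells $\beta_r$ for $r\geq 1$. The operator $d_1:\X_1\to\X_0$ and the cell $\beta_0$ live at the augmented level. Theorem~\ref{OE} uses $\beta_0$ directly to build the cardinality functor, so you need them. They must be supplied by extending along the split coequaliser of Proposition~\ref{splitsimp}, which $\SSS$ preserves. Concretely, $d_1=d_0d_2s_0$ and $\beta_0=d_0\beta_1 s_0$, and one checks their coherences via that universal property.
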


This gives some conceptual content to the mixture of strictness and pseudo-ness. Since
we are talking strict $\DDD$-coalgebras, we have simplicial groupoids that are strict
on the $\tEM$-part (so strict except for the top face operators), and the fact that we
consider {\em normal} pseudo algebras is the strictness of the identities $d_n s_{n-1}
= \id$. The remaining pseudo-ness and the coherence is precisely what is means to be a
pseudo-algebra. The $2$-cells involved and the equations they satisfy are the standard
ones for pseudo-algebras for a monad. In the case of the decalage monad (on
coalgebras) they can be found in \cite{Batanin-Kock-Weber:2512.22794} where we spell
them out (in the oplax case) in connection with the statement that the pita nerve is
oplax simplicial. The shuffle condition is the same as in 
\ref{shuffle}.

In detail this structure unpacks to the following: An operadic category is a small
category $\CC$ equipped with strict simplicial maps
$$
\tau: \CC\to\DDD\CC \qquad\qquad
\fib: \DDD \CC \to \SSS \CC 
$$
satisfying the following axioms.

(1) \emph{$\tau$ is a $\DDD$-coalgebra structure}.  That is we have
\begin{equation}\label{ax:coalg}
\xymatrix @C=1.8ex {
   & \DDD \CC \ar[rd]^{\epsilon} & \\
   \CC \ar@{=}[rr] \ar[ru]^{\tau} && \CC }
   \qquad\qquad 
   \qquad \qquad
\xymatrix{
   \CC \ar[r]^{\tau}\ar[d]_{\tau} & \DDD \CC \ar[d]^{\DDD \tau} \\
   \DDD \CC \ar[r]_-{\delta} & \DDD \DDD \CC .}
\end{equation}

(2) \emph{$\fib$ is a coalgebra homomorphism from $(\DDD \CC, \delta)$ to 
$(\SSS \CC, \SSS \tau)$}:
\begin{equation}\label{ax:phi-coalgmap}
  \xymatrix @C=1.8ex {
   \DDD \CC \ar[d]_{\delta} \ar[rrr]^{\fib} &&& \SSS \CC \ar[d]^{\SSS \tau} \\
   \DDD\DDD \CC  \ar[rr]_-{\DDD\fib}  &&  \DDD\SSS \CC \ar@{=}[r] &\SSS \DDD \CC .
  }
\end{equation}

(3) \emph{$(\CC,\fib)$ is a shuffle-type normal pseudo-$\widetilde \DDD$-algebra in 
$\kat{Kl}_\SSS(\DDD\kat{-Coalg})$}:
This means that the compatibility with the unit is strict:
\begin{equation}\label{ax:tau-phi-eta}
  \xymatrix{
   & \DDD \CC \ar[rd]^{\fib} & \\
   \CC \ar[rr]_{\eta} \ar[ru]^{\tau} && \SSS \CC
  }
\end{equation}
whereas the compatibility with multiplication (the associative law) is
only up to a specified invertible $2$-cell $\beta$:

\begin{equation}\label{ax:fibres}
  \xymatrix @C=1.8ex {
   \DDD\DDD\CC \ar@{}[rrrdd]|{\beta}
   \ar[rr]^-{\DDD\fib}\ar[dd]_{\DDD\epsilon} && \DDD\SSS\CC \ar@{=}[r] & \SSS\DDD\CC \ar[d]^{\SSS\fib} \\
    &&& \SSS\SSS\CC  \ar[d]^{\mu} \\
    \DDD\CC \ar[rrr]_{\fib} &&& \SSS\CC .
}
\end{equation}
subject to the standard pseudo-algebra coherence constraints,
which are
precisely the ones we have already studied from the simplicial viewpoint  -- 
as well as the shuffle condition.
See \cite{Batanin-Kock-Weber:2512.22794} for details regarding the coherence 
conditions from the viewpoint of normal pseudo-algebras.

The algebra structure looks nicer in a more consistent Kleisli viewpoint: The fibre
functor $\fib: \DDD \CC \to \SSS \CC$ is naturally a morphism in the Kleisli category
of $\SSS$, and as such we denote it
$$\xymatrix{
\DDD\CC \ar[r]|-@{|}^-{\fib} & \CC
}$$
Now Axiom~\eqref{ax:phi-coalgmap} says that $\fib$ is a $\DDD$-coalgebra map:
\begin{equation}\label{ax:phi-coalgmap-Kl}
  \xymatrix  {
   \DDD \CC \ar[d]_{\delta} \ar[r]|-@{|}^-{\fib} & \CC \ar[d]^{\tau} \\
   \DDD\DDD \CC  \ar[r]|-@{|}_-{\DDD\fib}  &  \DDD \CC 
  }
\end{equation}
and Axioms~\eqref{ax:tau-phi-eta} and \eqref{ax:fibres} state that 
$\fib$ is a normal pseudo
$\DDD$-algebra structure on the coalgebra $\CC$:
Axiom~\eqref{ax:tau-phi-eta} states we have a strictly commutative 
diagram
\begin{equation}\label{ax:tau-phi-eta-Kl}
  \xymatrix{
   & \DDD \CC \ar[rd]|-@{|}^{\fib} & \\
   \CC \ar[rr]_{=} \ar[ru]^{\tau} && \CC
  }
\end{equation}
whereas Axiom~\eqref{ax:fibres} gives the $2$-cell 
\begin{equation}\label{ax:fibres-Kl}
  \xymatrix {
   \DDD\DDD\CC \ar@{}[rd]|{\beta}
   \ar[r]|-@{|}^-{\DDD\fib}\ar[d]_{\DDD\epsilon} & \DDD\CC  \ar[d]|-@{|}^-{\fib} \\
    \DDD\CC \ar[r]|-@{|}_-{\fib} & \CC
}
\end{equation}
subject to the coherence equations.

\section{$\catO$-operads as \ikeo maps}
\label{sec:operads}

The raison d'\^etre of operadic categories are the operads over them \cite{BMEH},
\cite{Batanin-Markl:1404.3886}, \cite{Batanin-Markl:2105.05198}
\cite{Batanin-Markl:1812.02935}. In this section we explain how the simplicial
viewpoint leads to a very elegant characterisation of operads (at least in cartesian
monoidal categories): We show that for an operadic category $\catO$ the category of
$\catO$-operads in $\Set$ is isomorphic to the category of free \ikeo morphisms over
$\nop(\catO)$. IKEO stands for {\em inner Kan and equivalence on objects}; it is an
important class of simplicial maps, studied in particular in the setting of
decomposition spaces~\cite{Galvez-Kock-Tonks:2409.03742}, since they induce algebra
homomorphisms at the level of incidence algebras~\cite{Galvez-Kock-Tonks:1512.07573}.

We recall from \cite{Batanin-Markl:1404.3886} the notion of operad for an operadic
category $\catO$ in a symmetric monoidal category $(V,\otimes,e)$. Let $E =
\{E(T)\}_{T \in \catO}$ be an $\Ob(\catO)$-indexed family of objects in $V$. For each
morphism $f:T\to S$, writing $T_j := f^{-1}(j)$ for each $j\in \norm{S}$, we define
\[
E(f) := \bigotimes_{j \in |S|} E({T_j}).
\]

\begin{definition}\label{Jarca_u_mne_prespala!}
	An \emph{operad over $\catO$} (or simply an \emph{$\catO$-operad}) is a
	family of objects  $\calP = \{\calP(T)\}_{T \in \catO}$ in $V$  together with units
	\[
	e\to \calP(U_c),\ c \in \pi_0(\catO),
	\]
	and for each $f:T\to S$ a composition law
	\[
	\gamma_f: \calP(f) \otimes \calP(S)\to \calP(T),
	\]
	satisfying the following axioms.
    
	\begin{itemize}
		\item[(i)] Let $T \stackrel f\to S \stackrel g\to R$ be morphisms in
		$\catO$ and $h=gf$.  Then the
		following diagram of composition  laws of $\calP$ combined with the
		coherence constraints in $V$  commutes:
		\[
		\xymatrix@C = 2em@R = .4em{
			\ar@/^2.5ex/[rrd]^(.56){\hskip .5em\bigotimes_{i}\gamma_{f_i} \otimes \id}
			\ar[dd]_(.45){\id \otimes \gamma_g}
			\displaystyle\bigotimes_{i
				\in |R|} 
			\calP(f_i) \otimes \calP(g) \otimes \calP(R) & &    
			\\  & &  \ar@/^/[dl]^{\gamma_h}
			\calP(h) \otimes \calP(R)\ .
			\\
			\ar[r]_(.77){\gamma_f}{\rule{0pt}{2em}}  
			\displaystyle\bigotimes_{i \in |R|}
			\calP(f_i) \otimes \calP(S) \cong   \calP(f) \otimes \calP(S)& \calP(T)&
		}
		\]
		\item[(ii)] The composite
		\[
		\xymatrix@1@C = +2em{ \calP(T) \ar[r]& \rule{0pt}{2em} \displaystyle
			\bigotimes_{i\in |T|} e \otimes \calP(T) \ar[r]&\rule{0pt}{2em}
			\displaystyle \bigotimes_{i\in |T|} \calP(U_{c_i})\otimes\calP(T)\ar[r]^(.56)= &
			\calP(\id_T)\otimes\calP(T)
			\ar[r]^(.65){\gamma_{\id}}&\calP(T) }
		\]
		is the identity for each $T \in \catO$.
		\item[(iii)] The composite 
		\[
		\xymatrix@1@C = +2.2em{ \calP(T)\otimes e \ \ar[r]&\ \calP(T)\otimes
			\calP(U_c) \ \ar[r]^= &\ \calP(!_T)\otimes \calP(U_c)\
			\ar[r]^{\hskip 1.8em \gamma_{!_T}}&\ \calP(T)},
		\]
		is the right unit coherence isomorphism in $V$ for each $T \in \catO$,    where  $!_T: T\to
		U_c$ is the unique morphism. 
	\end{itemize}
\end{definition}

A {\em morphism\/} $\calP' \to \calP$ of $\catO$-operads is a family of
morphisms \hbox{$\calP'(T) \to \calP(T)$} in $V$, one for each $T \in \catO$,
commuting with the composition laws and units. This defines the category
$\kat{Opd}(\catO)$ of $\catO$-operads.

\bigskip

We recall the following Definition~2.1 of \cite{Batanin-Markl:1404.3886} of a discrete
fibration between operadic categories:

\begin{definition}\label{psano_v_Myluzach}
  A strict operadic functor $p:\catP\to \catO$ is a {\em discrete operadic
  fibration} if
  \begin{itemize}
	  \item[(i)]
	  $p$ induces a bijection $\pi_0(\catP) \to
	  \pi_0(\catO)$ and
	  \item[(ii)]
	  for any morphism $f : T\to S$ in $\catO$ and any list of objects
	  $t_1,\ldots, t_{n}, s \in \catP$, where $n= |S|$, such that
	  \[
	  p(s) = S \mbox { and } p(t_j) = f^{-1}(j) \mbox { for  all} \ j
	  \in |S|,
	  \]
	  there exists a unique $\sigma : t\to s$ in $\catP$ such that
	  \[
	  p(\sigma) = f \mbox { and } t_j = \sigma^{-1}(j)\ \mbox { for all} \ j
	  \in |S|.
	  \]
  \end{itemize}
\end{definition}

The following characterisation of operads over an operadic category was
established in \cite{Batanin-Markl:1404.3886}, using a discrete operadic Grothendieck
construction.
\begin{proposition}[\cite{Batanin-Markl:1404.3886}]\label{opd=dof}
	There is an isomorphism between the category of $\catO$-operads in $\Set$  and the
	category of discrete operadic fibrations over $\catO$.
\end{proposition}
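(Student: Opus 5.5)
The plan is to build the discrete operadic Grothendieck construction explicitly, in both directions, and then check that the two constructions are mutually inverse on objects and on morphisms. I do everything for $V=\Set$ with its cartesian structure.

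\emph{From operads to fibrations.} Given an $\catO$-operad $\calP$, define the category $\catP$ as follows. Its objects are pairs $(T,x)$ with $x\in\calP(T)$. A morphism $(T,x)\to(S,y)$ is a pair $(f,(z_j)_{j\in|S|})$ with $f:T\to S$ in $\catO$ and $z_j\in\calP(f^{-1}(j))$, subject to $\gamma_f((z_j),y)=x$. Composition is given by the fibre maps together with $\gamma$. For $T\xrightarrow{f}S\xrightarrow{g}R$, the composite of $(f,(z_j))$ and $(g,(w_i))$ has label at $i\in|R|$ equal to $\gamma_{f_i}\bigl((z_{\epsilon j})_{j},w_i\bigr)\in\calP(h^{-1}(i))$. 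This type-checks because Axiom~\ref{axQ:BM-fibres-of-local-fibres} gives $(f_i)^{-1}(j)=f^{-1}(\epsilon j)$. Associativity of this composition is operad axiom (i), applied fibrewise, combined with the second half of Axiom~\ref{axQ:BM-fibres-of-local-fibres}. The identity of $(T,x)$ is $\id_T$ labelled by the units; it is a unit for composition by axioms (ii) and (iii), together with Axiom~\ref{axQ:BM-fibres-of-identities} and Axiom~\ref{axQ:BM-fibres-of-tau-maps}.

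The remaining operadic structure on $\catP$ is as follows.
\begin{itemize}
\item The chosen local terminals are $(U_c,\text{unit})$.
\item Cardinality is pulled back along the projection $p:\catP\to\catO$.
\item The fibre of $(f,(z_j))$ at $j$ is $(f^{-1}(j),z_j)$.
\item Fibre maps are labelled by the components of the composite labels.
\end{itemize}
The operadic-category axioms for $\catP$ then reduce to those for $\catO$ together with the operad axioms. By construction $p$ is a strict operadic functor. Condition (i) of Definition~\ref{psano_v_Myluzach} holds because units give the unique lift of each $U_c$. Condition (ii) holds because, given $t_j=(f^{-1}(j),z_j)$ and $s=(S,y)$, the morphism $(f,(z_j))$ with source $(T,\gamma_f((z_j),y))$ is the unique lift.

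\emph{From fibrations to operads.} Given a discrete operadic fibration $p:\catP\to\catO$, set $\calP(T):=p^{-1}(T)$, the set of objects of $\catP$ lying over $T$. The unit is the unique chosen local terminal over $U_c$, which exists by condition (i). The composition $\gamma_f((t_j),s)$ is the source of the unique lift $\sigma$ provided by condition (ii). Each operad axiom then follows from uniqueness of lifts.
\begin{itemize}
\item Axiom (i): both sides are sources of lifts of $h=gf$ with the same fibres. The fibres agree by Axiom~\ref{axQ:BM-fibres-of-local-fibres} applied in $\catP$ and preserved by $p$.
\item Axiom (ii): uniqueness applied to $\id$, whose lift must be the identity.
\item Axiom (iii): uniqueness applied to $!_T$, whose lift must be $\tau$.
\end{itemize}

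\emph{Inverse bijections and morphisms.} Going from an operad to a fibration and back visibly returns $\calP$. Going from a fibration to an operad and back yields a category isomorphic to $\catP$, since morphisms of $\catP$ are determined by $p(\sigma)$ and the fibres of $\sigma$. A morphism of operads induces a strict operadic functor over $\catO$. Conversely, a strict operadic functor over $\catO$ between fibrations restricts to fibrewise maps that commute with $\gamma$, again by uniqueness of lifts.

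I expect the main obstacle to be the bookkeeping in proving associativity of composition in $\catP$. This requires matching the indexing of $\bigotimes_i\calP(f_i)$ against $\calP(f)$ via the pita bijection $\pi_{|g|}$, which is exactly the isomorphism hidden in operad axiom (i). I would handle it by fixing that bijection once and for all, via Axiom~\ref{axQ:BM-fibres-of-local-fibres}, and checking the required equalities componentwise.
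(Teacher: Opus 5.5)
Your proposal is correct and is essentially the argument the paper relies on: the paper does not reprove this statement but cites Batanin--Markl's discrete operadic Grothendieck construction, which is exactly what you rebuild (objects $(T,x)$, morphisms $f$ labelled by elements of $\calP$ on the fibres, and conversely $\calP(T):=p^{-1}(T)$ with $\gamma_f$ given by sources of unique lifts). Two small points of precision: the round trips are identities only up to canonical identification (e.g.\ one gets back $\{T\}\times\calP(T)$ rather than $\calP(T)$), so \emph{isomorphism} should be read in that sense; and condition (i) of Definition~\ref{psano_v_Myluzach} is best justified by noting that every $(T,x)$ maps to $(U_c,\text{unit})$ via $(!_T,x)$, which by operad axiom (iii) is a legitimate morphism and the only one.
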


We shall see that discrete operadic fibrations, and therefore the notion of operad,
admit an elegant characterisation in terms of IKEO maps:

\begin{definition}
  A simplicial map $f: \Y \to \X$ (between simplicial sets or
  simplicial/ pseudo-simplicial groupoids) is called {\em IKEO}
  \cite{Galvez-Kock-Tonks:2409.03742} when it is inner Kan and an equivalence on 
  objects. 
  {\em  Inner Kan} means that for each $n$, the
  square
\begin{equation}\label{IKEO}
	\xymatrix@C = +4em{
		\Y_n 
		\ar[d]_{f_n} \ar[r]& \Y_1\times_{\Y_0}\cdots\times_{\Y_0} \Y_1 
		\ar[d]^{f_1\times_{f_0}\cdots\times_{f_0} f_1} 
		\\
		\X_n \ar[r] & \X_1\times_{\X_0}\cdots\times_{\X_0} \X_1
	}
\end{equation} 
is a homotopy pullback. Here the horizontal maps are the Segal maps, which take an
$n$-simplex to the $n$-tuple of its principal edges. {\em Equivalence on objects}
means an equivalence in degree 0.
\end{definition}

\begin{remark}
  One reason to be interested in IKEO maps is that in the case of decomposition
  spaces, they are the simplicial maps that induce algebra homomorphisms (covariantly)
  between incidence algebras~\cite{Galvez-Kock-Tonks:1512.07573}. For functors between
  categories, the inner-Kan condition is automatic, so in this case the IKEO condition
  reduces to asking for bijection on objects. Functoriality of the incidence-algebra
  construction in such maps goes back to
  Content--Lemay--Leroux~\cite{Content-Lemay-Leroux}.
\end{remark}

The notions of equivalence and homotopy pullback refer to the homotopy structure of
groupoids (or spaces), but in our situation they actually reduce to the strict notions
of isomorphism and strict pullback. Indeed, we are concerned with pseudo-simplicial symmetric 
strict monoidal groupoids 
$\X, \Y:\simplexcategoryop\to \SMGp$
that arise as operadic nerves, and their strict morphisms.
Strictness of morphisms implies that the squares \eqref{IKEO} commute strictly.
Recall furthermore that $\X$ and $\Y$ are levelwise free $\SSS$-algebras, and 
that strict morphisms are required to be levelwise free as well. That is,
they are $\SSS$ of morphisms of sets. In particular they are discrete fibrations of 
groupoid, and a commuting square is therefore a homotopy pullback if and only if it
is an ordinary pullback. Similarly, since $f_0$ is free, it is an equivalence if and 
only if it is an isomorphism.

\begin{proposition}\label{dof=ikeo}
  An operadic functor $\catP \to \catO$ is a discrete operadic fibration if and only
  if $\nop(\catP) \to \nop(\catO)$ is IKEO.
\end{proposition}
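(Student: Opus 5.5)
Write $F=\nop(p):\Y=\nop(\catP)\to\X=\nop(\catO)$. By the remarks just before the statement, $F$ is levelwise free, so being IKEO reduces to two strict conditions: $F_0$ is an isomorphism, and each Segal square \eqref{IKEO} is a strict pullback. Since everything is $\SSS$ applied to maps of sets, and $\SSS$ preserves and reflects pullbacks of sets and isomorphisms, I will check both conditions on monoidal generators. The plan is to match the two conditions with clauses (i) and (ii) of Definition~\ref{psano_v_Myluzach}.

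\emph{Objects.} In degree $0$ we have $F_0=\SSS(\pi_0(\catP)\to\pi_0(\catO))$. This is an isomorphism if and only if $\pi_0(p)$ is a bijection, which is exactly clause (i).

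\emph{Segal squares.} First I reduce to $n=2$. In a category object, and in $\SSS$ of one, an $n$-chain is determined by its principal edges. The lower Segal maps of $\X$ and $\Y$ only involve $d_0,d_1$ and inner face maps, and these are $\SSS$ of the corresponding maps for the nerves. So for $n\ge 2$ the Segal maps $\X_n\to\X_1\times_{\X_0}\cdots\times_{\X_0}\X_1$ must be understood with the operadic nerve's face operators, where $\X_1=\SSS\catO_0$ and the principal edges of an $n$-simplex (an $(n{-}1)$-chain) are computed using the top face $\dd_n$ for the last edge. Concretely, for $n=2$ a generator $f:T\to S$ of $\X_2$ has principal edges $\dd_2(f)=(f^{-1}(j))_{j\in\norm S}\in\X_1$ and $\dd_0(f)=S\in\X_1$, glued over $\X_0$ by $\dd_1(S)=\dd_0(\dd_2 f)$, which holds by Axiom~\ref{axQ:BM-67}. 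An element of the fibre product $\Y_1\times_{\Y_0}\Y_1$ lying over $(\dd_2 f,S)$ is then a list $t_1,\dots,t_n$ together with $s$ such that $p(s)=S$, $p(t_j)=f^{-1}(j)$, and $\dd_1 s$ matches the list of components of the $t_j$. Pullback-ness of the $n=2$ square says exactly that such data lifts uniquely to $\sigma\in\Y_2$ over $f$, with $\dd_2\sigma=(t_j)$ and $\dd_0\sigma=s$. That is clause (ii). The compatibility condition over $\Y_0$ is automatic given clause (i) and the chosen-terminals structure, so it imposes nothing extra. I will handle lists of generators by freeness, since $\SSS$ of a pullback of sets is a pullback. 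For $n\ge3$ I will argue by induction, decomposing the Segal map through $\Y_n\to\Y_{n-1}\times_{\Y_0}\Y_1$ via the appropriate faces, using that $\Y,\X$ restricted to their strict part are nerves of categories.

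\emph{Expected main obstacle.} The hard step is the higher Segal squares, because the last principal edge uses the top face, which is only pseudo-functorial. Pasting pullbacks therefore requires strict commutativity of the relevant squares. I expect these to involve only $\dd_0$ against $\dd_{\mathrm{top}}$ and inner faces against $\dd_{\mathrm{top}}$, which are strict (the $\alpha$-cells are identities), so the $\beta$-cells never enter. With that, the pasting lemma gives the $n$-case from the $2$-case and the $(n{-}1)$-case, using that a composable chain in $\catP$ over a chain in $\catO$ is determined by its lifted edges.
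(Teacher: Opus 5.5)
Your proposal is correct and follows the paper's proof: degree $0$ gives clause~(i), the $n=2$ Segal square is clause~(ii), and the higher Segal squares follow by a routine induction. The one step you only assert is that the compatibility condition over $\Y_0$ is automatic; this is exactly what the paper's converse argument establishes, and it holds because $F_0$ is injective by clause~(i) while $F_0(\dd_0 t)=\dd_0\dd_2 f=\dd_1\dd_0 f=F_0(\dd_1 s)$, where the middle strict identity comes from the fibre maps $f^{\id}_j\colon f^{-1}(j)\to\id_S^{-1}(j)$ and not from Axiom~\ref{axQ:BM-67} as you state.
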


\begin{proof}
  Let $p:\catP\to \catO$ be a discrete operadic fibration and let $\nop(p)$ be the
  corresponding morphism of operadic nerves whose components are $\nop(p)_{n} = 
  \SSS(p_{n-1})$,
  $n\ge 0$. For $n=0$ we need to check that $\SSS(p_{-1})$ is an
  isomorphism, but this follows immediately from bijectivity of $\pi_0(p)$.
	
	Let now $n=2$. Form the corresponding commutative diagram:
	\begin{equation*}
		\xymatrix@C = +2em{
			\SSS\catP_1 \ar[d]_{\SSS(p_1)} \ar[r]^{}& 
			\SSS\catP_0\times_{\SSS\catP_{-1}} \SSS\catP_0
			\ar[d]^{\SSS(p_0)\times_{\SSS(p_{-1})}\SSS(p_0)} 
			\\
			\SSS\catO_1\ar[r]^{} & \SSS\catO_0\times_{\SSS\catO_{-1}} \SSS\catO_0
		}
	\end{equation*}   
	The top horizontal map in this diagram sends a generator $\sigma: t\to s$
	to a pair
	$$
	(d_2(\sigma),d_0(\sigma)) =  (\sigma^{-1}(1)\ldots\sigma^{-1}(n),s)
	$$ 
	where $n=|s|$. Similarly for the bottom horizontal map. The square above
	is a pullback if and only if for any $(t_1\ldots t_n,s)\in
	\SSS\catP_0\times_{\SSS\catP_{-1}} \catP_0$ with $n =|s|$ and $f:T\to S$
	in $\catO_1$ such that $p(s) =S$ with $p(t_j) =f^{-1}(j), \ j\in |S| =
	|s|$, there exists a unique $\sigma:t\to s$ with $p(\sigma) = f$ and $t_j
	= \sigma^{-1}(j)$. Since $p$ is a discrete operadic fibration, this
	condition is certainly satisfied.
	
	For the converse, observe that the pullback condition for this square is
	almost the condition on $p$ to be a discrete fibration except that we
	have an extra condition for $t_j$ to be in the same connected component
	as the $i$-th fibre of identity $\id:s\to s$. So a priori, there might
	exist an operadic functor which satisfies this pullback condition but is
	not a discrete fibration. Our task is to rule out that scenario. Indeed,
	given data
	$$
	(t_1\ldots t_n,s) \ , f:T\to S, \ p(s) = S, \ p(t_j)= f^{-1}(j) , \ j\in |S| = |s|
	$$
	for a lifting problem, we first observe that we have $p(\id_s^{-1})(j) =
	\id^{-1}_S(j), \ j\in |S|$, because operadic functors preserve fibres,
	identities and cardinalities. Let $t_j\xto{!^{t_j}}U_{c_j}$ be the unique
	map to a chosen local terminal. Then $p(!^{t_j})$ is the unique morphism
	$f^{-1}(j)\xto{} p(U_{c_j})$. On the other hand the commutative triangle
	\begin{equation*} 
		\xymatrix@C = +1em@R = +1em{
			T      \ar[rr]^f \ar[dr]_f & & S \ar[dl]^\id
			\\
			&S&
		}
	\end{equation*}  
	induces maps on fibres $!_j = f_j^{\id} :f^{-1}(j)\to \id_S^{-1}(j), j\in |S|$.
	Hence $p(!^{t_j})=!_j $ and, in particular, $p(U_{c_j}) = \id_S^{-1}(j) =
	p(\id_s^{-1}(j))$. But $p$ is a bijection on trivial objects and we
	conclude that $U_{c_j} = \id_s^{-1}(j)$. So the condition of the pullback
	was already hidden in the lifting data for an operadic fibration.
	
	The inner Kan condition for $n>2$ can now be obtained by a straightforward
	induction using the equivalence of the conditions for $n=2$.
\end{proof}

Combining Proposition~\ref{dof=ikeo} with Proposition~\ref{opd=dof},
we finally arrive at the promised simplicial characterisation of operads:

\begin{theorem}\label{IKEOoperad}
	For a operadic category $\catO$ there is an isomorphism of
	categories
	$$
	\kat{Opd}(\catO) \simeq \kat{TopPs}^{\operatorname{sh}}/_{\operatorname{IKEO}}\nop(\catO)
	\subset \kat{TopPs}^{\operatorname{sh}}/\nop(\catO)
	$$
	between the category of $\catO$-operads in $\Set$ and the
	full subcategory of category of the slice $\kat{TopPs}^{\operatorname{sh}}/\nop(\catO)$
	consisting of \ikeo maps.
\end{theorem}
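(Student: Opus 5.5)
The plan is to compose three isomorphisms: $\kat{Opd}(\catO)\simeq$ discrete operadic fibrations over $\catO$ (Proposition~\ref{opd=dof}), then the pullback description of $\OpCat$ (Theorem~\ref{pbk-thm}), and then Proposition~\ref{dof=ikeo}. Concretely, I will first regard discrete operadic fibrations over $\catO$ as a full subcategory of the slice $\OpCat/\catO$. Morphisms between fibrations $p\colon\catP\to\catO$ and $p'\colon\catP'\to\catO$ are strict operadic functors $\catP\to\catP'$ over $\catO$. I will check that under the Batanin--Markl Grothendieck construction these correspond exactly to morphisms of operads. This is part of the content of Proposition~\ref{opd=dof}, which I take as given at the level of categories.

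Next I will apply $\nop$ to the slice. By Theorem~\ref{pbk-thm}, $\nop$ identifies $\OpCat$ with the pullback $\Catlt\times_{[(\tEM)\op,\SMGp]}\kat{TopPs}^{\operatorname{fr+sh}}$. Slicing the isomorphism over $\catO$ therefore gives an isomorphism
$$\OpCat/\catO\simeq (\text{pullback})/(\nop\catO,\catO).$$
The key point is to identify this slice with the full subcategory of $\kat{TopPs}^{\operatorname{sh}}/\nop(\catO)$ on the levelwise free maps. I will argue in two steps.

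First, given a levelwise free strict map $F\colon\Y\to\nop(\catO)$ of top-pseudo shuffle-like objects, I claim $\Y$ is levelwise free and $\uuu\upperstar\Y$ is $\SSS$ of the nerve of some category with chosen local terminals. Since $F$ is $\SSS$ of a map of sets in each degree, I will take the generators of $\Y_r$ and show they are closed under the strict lower operators, because $F$ commutes with them and the operators on $\nop(\catO)$ preserve generators. This gives a top-split augmented simplicial set $\A$ with $\uuu\upperstar\Y=\SSS\A$. I then need $\A$ to be the $\ltnerve$ of a category. The Segal condition for $\A$ should follow by pulling back along $\A\to\ltnerve\catO$; but this is not automatic for arbitrary levelwise free maps. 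For the IKEO ones it does hold, since the inner Kan squares are pullbacks (by the strictness remarks preceding Proposition~\ref{dof=ikeo}) and the Segal maps of $\nop\catO$ restricted to $\uuu\upperstar$ are Segal for $\catO$. Because we restrict to IKEO maps anyway, I will prove the claim only for them. I expect this to be the main obstacle: extracting a genuine category $\catP$ with $\Y=\nop(\catP)$, so that Theorem~\ref{OE} and the Corollary after it apply and make $F=\nop(p)$ for a unique operadic functor $p$.

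Second, I will handle morphisms. A morphism in the slice between two such objects is a strict simplicial map over $\nop(\catO)$. It is automatically levelwise free, because it is a map between discrete fibrations over a common base. By the morphism part of Theorem~\ref{pbk-thm}, it comes from a unique strict operadic functor over $\catO$. Hence the full subcategory of $\kat{TopPs}^{\operatorname{sh}}/\nop(\catO)$ on IKEO (levelwise free) maps is isomorphic to the full subcategory of $\OpCat/\catO$ on those $p$ with $\nop(p)$ IKEO. By Proposition~\ref{dof=ikeo}, these are exactly the discrete operadic fibrations. Composing with Proposition~\ref{opd=dof} gives $\kat{Opd}(\catO)\simeq\kat{TopPs}^{\operatorname{sh}}/_{\operatorname{IKEO}}\nop(\catO)$.
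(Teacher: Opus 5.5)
Your skeleton matches the paper's. Its proof is nothing more than Proposition~\ref{opd=dof} combined with Proposition~\ref{dof=ikeo}, with Theorem~\ref{pbk-thm} tacitly used to identify operadic functors over $\catO$ with strict levelwise free maps into $\nop(\catO)$. The domain of such a map is also tacitly taken to be an operadic nerve (cf.\ the introduction, and Definition~\ref{shuffle}, which only defines shuffle-likeness for $\Y$ with $\uuu\upperstar\Y=\SSS\ltnerve(\catP)$; in that case Theorem~\ref{OE} applies at once). Your treatment of morphisms is fine.

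The trouble is the extra step in which you show that an arbitrary levelwise free IKEO map has a nerve-like domain: the reason you give is wrong. The inner Kan squares are built from the Segal maps of $\Y$ and $\nop(\catO)$ themselves, and every principal edge except the last is obtained using top face operators. Already in degree $2$ the Segal map of $\nop(\catO)$ is $f\mapsto\big((f^{-1}(1),\ldots,f^{-1}(n)),S\big)$, and $\nop(\catO)$ is not Segal. These maps do not `restrict to $\uuu\upperstar$', since $\uuu\upperstar$ discards precisely the top face operators, and they are not the Segal maps of $\strictnerve\catO$. The Segal condition for $\A$ is about lower faces, e.g.\ $(d_2,d_0)\colon\Y_3\to\Y_2\times_{\Y_1}\Y_2$ on generators. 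The IKEO condition does not say that these squares over $\ltnerve\catO$ are pullbacks, so there is nothing to pull back.

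The conclusion is true, but proving it requires the $\beta$-cells and the fact that each $F_r$ is a discrete fibration.
\begin{itemize}
\item Take composable $a\colon t\to s$, $b\colon s\to r$ in $\A_1$ over $f,g$, put $\sigma=(f,g)\in\catO_2$, and lift $\beta_1(\sigma)\colon d_2d_2\sigma\Rightarrow d_2d_3\sigma$ to a morphism $d_2a\to y$ in $\Y_1$.
\item From $d_0\beta_1=\beta_0d_0$, $F(\beta_0(b))=\beta_0(g)$ and uniqueness of lifts you get $d_0y=d_1d_2b$. So $(y,d_2b,d_0b)$ lies over the Segal image $(d_2d_3\sigma,d_2g,R)$ of $\sigma$, and IKEO in degree $3$ gives a unique $\tau$ over $\sigma$ with these principal edges.
\item IKEO in degree $2$ then gives $d_0\tau=b$. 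Combined with uniqueness of the lift of $\beta_1(\sigma)$ ending at $y$, it also gives $d_2\tau=a$.
\item Conversely, any $\tau'$ with $d_2\tau'=a$ and $d_0\tau'=b$ lies over $\sigma$ and has principal edges $(y,d_2b,d_0b)$, hence equals $\tau$.
\end{itemize}
The higher Segal conditions follow in the same way from IKEO in degree $n+1$. Only then can you invoke Theorem~\ref{OE} (or the corollary after it) to get $\Y=\nop(\catP)$ and $F=\nop(p)$.
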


\bigskip

\footnotesize

\noindent
Address: {Department of Mathematics, HSE University, 119048 Usacheva str., 6, Moscow, Russia}

\noindent
Email addresses: \texttt{bataninmichael@gmail.com, mbatanin@hse.ru} 

\medskip

\noindent
Address: {Universitat Aut\`onoma de Barcelona, University of Copenhagen, 
and Centre de Recerca Matem\`atica}

\noindent
Email address: \texttt{joachim.kock@uab.cat} 

\medskip

\noindent
Address: {Macquarie University, Sydney}

\noindent
Email address: \texttt{mark.weber.math@googlemail.com} 

\vfill

\hrule

  \noindent Grant info: We acknowledge support from grant
  No.~10.46540/3103-00099B from the Independent Research Fund Denmark,
  grant PID2024-158573NB-I00 (AEI/FEDER, UE) of Spain and grant
  2021-SGR-1015 of Catalonia, as well as the Severo Ochoa and Mar\'ia de
  Maeztu Program for Centers and Units of Excellence in R\&D grant number
  CEX2020-001084-M and the Danish National Research Foundation
  through the Copenhagen Centre for Geometry and Topology (DNRF151).

\end{document}